\documentclass{amsart}
\usepackage{amssymb, amsmath, tikz-cd}
\usepackage{stmaryrd}
\usepackage{xcolor}

\title[Explicit canonical heights for divisors]{Explicit canonical heights for divisors relative to endomorphisms of $\PP^N$}
\author{Patrick Ingram}
\thanks{This research is supported in part by a grant from NSERC of Canada.}
\address{York University, Toronto}

\renewcommand{\epsilon}{\varepsilon}

\newcommand{\PP}{\mathbb{P}}
\newcommand{\ZZ}{\mathbb{Z}}
\newcommand{\CC}{\mathbb{C}}
\newcommand{\RR}{\mathbb{R}}
\newcommand{\QQ}{\mathbb{Q}}
\renewcommand{\AA}{\mathbb{A}}
\newcommand{\Res}{\operatorname{Res}}

\newtheorem{theorem}{Theorem}
\newtheorem{lemma}[theorem]{Lemma}

\newtheorem{proposition}[theorem]{Proposition}
\newtheorem*{conjecture}{Conjecture}

\theoremstyle{definition}
\newtheorem{remark}{Remark}

\begin{document}
\maketitle

\begin{abstract}
	Given an endomorphism $f$ of $\PP^N$ and a divisor $D$ on $\PP^N$, both defined over $\overline{\QQ}$, we exhibit explicit bounds on the difference between the naive height of $D$ and the canonical height relative to $f$.
\end{abstract}

\section{Introduction}\label{sec:intro}

Let $f:\PP^N\to\PP^N$ be an endomorphism of degree $d\geq 2$ defined over $\overline{\QQ}$. Attached to $f$ is the Call--Silverman canonical height function $\hat{h}_f:\PP^N\to \RR$, which reveals much about the behaviour of points under the iterates of $f$~\cite{callsilv}. 

  Zhang~\cite{zhang}  constructed a canonical height for subvarieties, generalizing the Call--Silverman construction, which can also be recovered by a standard telescoping sum argument from the work of Bost--Gillet--Soul\'{e}~\cite{bgs} (see, e.g., \cite{hutz}). The arguments in \cite{bgs, zhang} are presented at a level of generality which makes explicit estimates difficult, however, which is an issue both for explicit computation, as well as for establishing results that are  uniform in $f$.
The purpose of this paper is to explore various local and global heights on $\PP^N$ relevant to arithmetic dynamics, with an eye to explicit and uniform estimates.

\subsection{The canonical height of a divisor}
For a divisor $D$ on $\PP^N$, defined over $\overline{\QQ}$, let $h(D)$ denote the \emph{Philippon height} of $D$, normalized to be invariant under extensions of the ground field. That is, if $D$ is defined by $\Phi(\mathbf{X})=0$, with $\Phi(\mathbf{X})\in K[\mathbf{X}]$ a homogeneous form, then
\begin{equation}\label{eq:phil}h(D)=\sum_{v\in M_K^0}\frac{[K_v:\QQ_v]}{[K:\QQ]}\log\|\Phi\|_v+\sum_{v\in M_K^\infty}\frac{[K_v:\QQ_v]}{[K:\QQ]}\int \log|\Phi|_vd\mu,\end{equation}
where the former sum involves the Gauss norm at each finite place, and the latter integral is around the unit circle in each variable (Mahler measure). 

Given $f:\PP^N\to \PP^N$ of degree $d$, the main height under consideration will be \[\hat{h}_f(D)=\lim_{k\to\infty}\frac{h(f^k_*D)}{d^{Nk}},\]
although this is obtained \emph{post hoc} from different constructions here and in~\cite{zhang}.
The canonical height constructed by Zhang is $\hat{h}_f(D)/\deg(D)$ in this notation, but we have chosen the present normalization so that the height is linear in the input.

%
%
%
%

From this definition, and from~\cite{zhang}, it is not immediately clear that one can give any explicit estimates on the canonical height of a divisor under an endomorphism of $\PP^N$, however. In particular, it is not clear that $\hat{h}_f(D)$ can be computed to arbitrary precision in a predictable amount of time.

  Our  aim in this paper is to present an explicit construction, allowing computation of actual values of the canonical height. Although the constants are currently too large to make these computations feasible in general, it follows from the next claim that there is an algorithm which produces in finite time, for any $f$, $D$, and $\epsilon>0$,  an interval $I\subseteq \RR$ of length $\epsilon$ and a proof that $\hat{h}_f(D)\in I$. (In other words, there is an algorithm to compute $\hat{h}_f(D)$.) 
\begin{theorem}\label{th:eff}
There are explicit constants $C_1$ and $C_2$, depending only on $N\geq 1$ and $d\geq 2$, so that if $f:\PP^N\to\PP^N$ has degree $d$, and $D$ is an effective divisor on $\PP^N$, both defined over $\overline{\QQ}$, then \[\left|\hat{h}_f(D)-h(D)\right|\leq \left(C_1h(f)+C_2\right)\deg(D),\]
where $h(f)$ is the height of the coefficients of $f$, as a projective tuple.
\end{theorem}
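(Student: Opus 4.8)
The plan is to establish the estimate by a telescoping argument, controlling the one-step change $h(f_*D) - d^N h(D)$ by a quantity linear in both $h(f)$ and $\deg(D)$, and then summing a geometric series. Concretely, the key is a lemma of the shape
\[
\bigl| h(f_*D) - d^N h(D) \bigr| \leq \bigl( A\, h(f) + B \bigr) \deg(D),
\]
with $A,B$ explicit in $N$ and $d$ only. Granting such a bound, apply it with $D$ replaced by $f^k_* D$, using $\deg(f^k_* D) = d^{Nk}\deg(D)$ and $h(f^k) \leq$ (explicit polynomial in $k$)$\cdot h(f) + $ const — or more cleanly, absorb the iterate-height growth by noting $h(f^{k+1}_* D)$ compares to $d^N h(f^k_* D)$ with an error governed by $h(f)$ directly if one is slightly careful, since $f^{k+1}_*D = f_*(f^k_* D)$ and the one-step lemma only ever sees $f$ itself, not $f^k$. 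Dividing by $d^{N(k+1)}$ and summing $\sum_{k\geq 0} d^{-Nk} = (1-d^{-N})^{-1}$ yields
\[
\bigl|\hat h_f(D) - h(D)\bigr| \leq \frac{A\,h(f)+B}{1-d^{-N}}\,\deg(D),
\]
so one takes $C_1 = A/(1-d^{-N})$ and $C_2 = B/(1-d^{-N})$.

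The heart of the matter is thus the one-step lemma, and here is where I would do the real work. Writing $D = \{\Phi = 0\}$ for a homogeneous form $\Phi$ of degree $e = \deg(D)$, the pushforward $f_*D$ is cut out by a form whose construction is essentially a resultant/elimination computation: $f_*D$ is supported on $\{y : \Phi \text{ vanishes on } f^{-1}(y)\}$, and an explicit defining form can be written as a resultant of $\Phi(\mathbf{X})$ against the equations $X_i f_j(\mathbf{X}) - X_j f_i(\mathbf{X})$ (or via a Chow-form / $u$-resultant description). The point is that this form is a polynomial in the coefficients of $\Phi$ and the coefficients of $f$ with \emph{explicit} multidegree: degree $d^N$ in the coefficients of $\Phi$ and degree bounded by an explicit function of $N$, $d$, $e$ — in fact linear in $e$ after dividing out — in the coefficients of $f$. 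One then needs height estimates for resultants: at each place $v$, $\log\|\Res\|_v$ is bounded above and below by the corresponding quantities for the inputs, with error terms that are the $v$-adic sizes of the universal coefficients of the resultant polynomial, i.e. controlled by explicit combinatorial constants depending on the multidegrees. Combining the finite-place (Gauss norm) and infinite-place (Mahler measure) contributions via the Philippon height formula~\eqref{eq:phil}, and invoking the standard behaviour of Mahler measure under resultants (e.g. the bounds descending from the work on heights of resultants, with fully explicit constants in terms of the number of variables and degrees), gives the desired one-step inequality with $A$ and $B$ explicit.

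The main obstacle I anticipate is producing the \emph{explicit} defining form for $f_*D$ together with sharp enough, fully explicit height bounds for it — in particular separating the $\deg(D)$-dependence (which must come out linear, to match the statement) from the $N,d$-dependence. A naive resultant of $\Phi$ against $N$ generic equations would have degree in the $f$-coefficients growing like $e \cdot d^{N-1}$ or worse, which is fine since it still divides out after normalizing by $\deg(f_*D) = d^N e$, but one must check the normalization carefully so that the per-unit-degree error is a genuine constant and not something that drifts with $e$. A secondary technical point is the lower bound (Lipschitz estimate going the other direction): resultants can vanish or become small, so one should either restrict to the generic situation and argue by continuity/specialization, or — cleaner — phrase everything in terms of the already-established comparison $h(D) \geq $ (something) and use that $f_*$ is, up to the resultant construction, a morphism of the relevant Chow varieties with controlled arithmetic. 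Once the one-step lemma is in hand with explicit $A,B$, the rest is the routine geometric-series bookkeeping sketched above.
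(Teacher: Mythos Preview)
Your overall telescoping structure is right and matches the paper (which runs it locally at each place and then sums). A minor slip: $\deg(f_*D)=d^{N-1}\deg(D)$, not $d^N\deg(D)$; with your stated formula the telescoped terms $d^{-N(k+1)}(Ah(f)+B)\deg(f_*^kD)$ are all equal and the series diverges, but this is easily repaired once the correct degree is used.

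The real gap is in the one-step lemma. You propose to control $h(f_*D)-d^Nh(D)$ by ``height estimates for resultants,'' with error terms coming from the universal integer coefficients of the resultant polynomial. This is exactly the approach of Hutz that the paper cites as yielding errors of order $d^{N\deg(D)^N}\deg(D)^{3N}\log\deg(D)$---far from linear in $\deg(D)$, and hence useless for the telescoping. The difficulty is that the universal resultant is a polynomial in the $\binom{e+N}{N}$ coefficients of $\Phi$, and the combinatorial constants in standard resultant-height bounds see that quantity, not $e$ alone. The paper's key idea avoids resultant coefficients entirely: one tests $F_*\Phi$ at a point $Q$ witnessing $\log|F_*\Phi(Q)|\geq m(F_*\Phi)+\deg(F_*\Phi)\log\|Q\|$, expands $F_*\Phi(Q)=\prod_{F(P)=Q}\Phi(P)$ as a product over the $d^{N+1}$ preimages, and bounds each factor via the uniform estimate $\log|\Phi(P)|\leq m(\Phi)+\deg(\Phi)\log\|P\|+N\deg(\Phi)\log^+|2|$ together with the already-explicit point comparison $\log\|P\|=\tfrac{1}{d}\log\|Q\|+O(\lambda_{\operatorname{Hom}_d^N}(f))$. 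This reduces the divisor estimate to the point estimate and produces an error genuinely linear in $\deg(\Phi)$; the lower bound is obtained similarly after passing through $F^*F_*\Phi=\Phi^{d^{N+1}}$. Your proposal correctly flags the $\deg(D)$-dependence as the main obstacle but does not supply this mechanism for overcoming it.
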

Since the primary interest here is the explicit nature of the results,  we note that the statement holds with the specific values
\begin{gather*}
C_1=5Nd^{N+1}\\
C_2= 3^NN^{N+1}(2d)^{N2^{N+4}d^N},	
\end{gather*}
although these are generous over-estimates of much more complicated expression in Section~\ref{sec:global}; even those are likely far from optimal.

Since $\deg(f_* D)=d^{N-1}\deg(D)$ and $\hat{h}_f(f_*D)=d^N\hat{h}_f(D)$, we may for any $\epsilon>0$ and any given $f$ and $D$ choose $k$ so that
\[d^{k}\epsilon>\left(C_1h(f)+C_2\right)\deg(D)\]
 and thus obtain
\[\left|\hat{h}_f(D)-\frac{h(f_*^kD)}{d^{Nk}}\right|<\epsilon,\]
the claimed effective computation. (Note that the naive height $h(D)$ is easily related to the usual Weil height of the tuple of coefficients of any homogeneous form defining $D$ by estimates of Mahler relating the Mahler measure of a polynomial to the size of its coefficients.)

Theorem~\ref{th:eff} boils down, in some sense,  to explicit estimates comparing $h(D)$ to $h(f_*D)$, although in fact we proceed entirely locally. Explicit estimates in this vein were obtained by Hutz~\cite[Theorem~4.5, Corollary~4.6]{hutz}, but there the dependence of the error terms on the degree of $D$ was exponential, growing as $\deg(f)^{N\deg(D)^N}\deg(D)^{3N}\log\deg(D)$. In particular, the error terms grow so quickly as to provide no explicit bounds at all for the canonical height, while the errors in the inequalities presented below are all linear in $\deg(D)$.

Also note that if $D$ is preperiodic (that is, for every irreducible component $D'$ of $D$ there exist  $m\neq n$ with $f^n(D')=f^m(D')$ as hypersurfaces), then $\hat{h}_f(D)=0$. Theorem~\ref{th:eff}, then, gives an effective method for enumerating all preperiodic hypersurfaces of a given degree, defined over number fields of bounded degree, a goal of~\cite{hutz} (where the same problem was solved assuming that the forward images of the hypersurface did not increase in degree).

Although we have not included the details here, further work in this direction can be used to produce explicit bounds on the canonical height for subvarieties of any dimension.

\subsection{The critical height}
Following Silverman~\cite{barbados}, one might consider a ``height'' $\hat{h}_{\mathrm{crit}}$ on the moduli space $\mathsf{M}_d^N$ of degree-$d$ endomorphisms of $\PP^N$, defined by
\[\hat{h}_{\mathrm{crit}}(f)=\hat{h}_f(C_f),\]
where $C_f$ is the critical divisor of $f$. Recall that $f$ is \emph{postcritically finite} (PCF) if and only if the postcritical set \[P_f:=\bigcup_{n\geq 1}f^n(C_f)\] is Zariski closed.

\begin{theorem}\label{th:crit}
The well-defined function $\hat{h}_{\mathrm{crit}}:\mathsf{M}_d^N\to \RR^{\geq 0}$ satisfies
\begin{enumerate}
\item $\hat{h}_{\mathrm{crit}}(f)=0$ when $f$ is PCF.
\item  $\hat{h}_{\mathrm{crit}}(f^n)=n\hat{h}_{\mathrm{crit}}(f)$
\item For any ample Weil height $h_{\mathsf{M}_d^N}$ on $\mathsf{M}_d^N$,
\[\hat{h}_{\mathrm{crit}}(f)\ll h_{\mathsf{M}_d^N}(f).\]
\item If $t\to f_t$ is a one-parameter family of endomorphisms of $\PP^N$ whose generic fibre has nonzero critical height, then
 \[\hat{h}_{\mathrm{crit}}(f_t)\asymp  h_{\mathsf{M}_d^N}(f_t)\] (with implied constants depending on the family).
\end{enumerate}
\end{theorem}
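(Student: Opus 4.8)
I will treat the four parts more or less separately; (i)--(iii) are short, and (iv) is where the real work lies.

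\medskip

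\noindent\emph{Part (i).} If $f$ is PCF then $P_f=\bigcup_{n\geq 1}f^n(C_f)$ is Zariski closed. Since $f$ is finite, $f^n$ carries a hypersurface to a hypersurface, so $P_f$ is a finite union of hypersurfaces, and hence the set $\{f^n(C') : n\geq 1,\ C'\text{ an irreducible component of }C_f\}$ is finite. In particular, for each component $C'$ of $C_f$ there are $m\neq n$ with $f^n(C')=f^m(C')$ as hypersurfaces, i.e. $C_f$ is preperiodic in the sense of the remark following Theorem~\ref{th:eff}. By that remark and the linearity of $\hat h_f$ in the divisor, $\hat h_{\mathrm{crit}}(f)=\hat h_f(C_f)=0$.

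\medskip

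\noindent\emph{Part (ii) and well-definedness.} The plan is to extract two functoriality identities directly from $\hat h_f(D)=\lim_k h(f^k_*D)/d^{Nk}$. First, $\hat h_{f^n}=\hat h_f$ on divisors, because the defining limit for $f^n$ (degree $d^n$) is the subsequence $k\mapsto nk$ of the limit for $f$. Second, $\hat h_f(f^*D)=\hat h_f(D)$: since $f_*f^*D=d^N D$ we get $f^k_*(f^*D)=d^N f^{k-1}_*D$ for $k\geq 1$, and dividing by $d^{Nk}$ and letting $k\to\infty$ gives the claim (using that $h$ is linear in the divisor). Now combine these with the chain rule for ramification divisors, $C_{f^n}=\sum_{i=0}^{n-1}(f^i)^*C_f$: one obtains $\hat h_{\mathrm{crit}}(f^n)=\hat h_{f^n}(C_{f^n})=\hat h_f\bigl(\sum_{i=0}^{n-1}(f^i)^*C_f\bigr)=\sum_{i=0}^{n-1}\hat h_f(C_f)=n\,\hat h_{\mathrm{crit}}(f)$, where I have applied $\hat h_f\circ f^*=\hat h_f$ repeatedly. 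The same two identities, plus $\deg\phi=1$ for $\phi\in\mathrm{PGL}_{N+1}$ (so $\phi_*\phi^*=\mathrm{id}$ on divisors, and conjugation alters $\hat h_f$ only through a bounded factor that is absorbed in the limit), show $\hat h_{\mathrm{crit}}(\phi^{-1}f\phi)=\hat h_{\mathrm{crit}}(f)$, so $\hat h_{\mathrm{crit}}$ descends to $\mathsf M_d^N$; its non-negativity is the positivity of the canonical height of an effective cycle relative to the canonical (semipositive) metric, as in Zhang~\cite{zhang}.

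\medskip

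\noindent\emph{Part (iii).} Apply Theorem~\ref{th:eff} with $D=C_f$. Since $\deg C_f=(N+1)(d-1)$ depends only on $N$ and $d$, this gives $\hat h_{\mathrm{crit}}(f)\leq h(C_f)+\bigl(C_1 h(f)+C_2\bigr)(N+1)(d-1)$. The divisor $C_f$ is cut out by the Jacobian determinant $\det(\partial f_i/\partial x_j)$, a fixed polynomial (of bounded degree and with boundedly many terms, all in terms of $N,d$) in the coefficients of $f$, so the usual estimates for heights of polynomials applied to given arguments yield $h(C_f)\ll h(f)+1$. Hence $\hat h_{\mathrm{crit}}(f)\ll h(f)+1$ with implied constants depending only on $N,d$. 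Since $\hat h_{\mathrm{crit}}$ is conjugation-invariant, this bound passes to $\mathsf M_d^N$, and the comparison with an arbitrary ample Weil height $h_{\mathsf M_d^N}$ follows from the standard relationship between $h(f)$ (the Weil height on the parameter space) and ample heights on the quotient $\mathsf M_d^N$, cf.\ Silverman~\cite{barbados}.

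\medskip

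\noindent\emph{Part (iv).} The bound $\hat h_{\mathrm{crit}}(f_t)\ll h_{\mathsf M_d^N}(f_t)$ is exactly (iii), so only the reverse inequality needs proof, and we may assume the induced map from the base $B$ to $\mathsf M_d^N$ is non-constant (otherwise both sides are bounded and there is nothing to show). Realizing the family as a morphism from a quasi-projective curve $B$ and passing to the generic point, we get $f$ of degree $d$ over $k=\overline{\QQ(B)}$ with critical divisor $C_f$, and by hypothesis the geometric canonical height $\hat h_{f,k}(C_f)$ is positive. The plan is then to invoke a specialization (variation-of-canonical-height) theorem for divisorial canonical heights in families to conclude $\hat h_{f_t}(C_{f_t})=\hat h_{f,k}(C_f)\,h_B(t)+o(h_B(t))$ as $h_B(t)\to\infty$, and to combine this with $h_B(t)\asymp h_{\mathsf M_d^N}(f_t)$ (valid since $B\to\mathsf M_d^N$ is non-constant). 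I expect this specialization step to be the main obstacle: the divisor case is less standard than the case of points, and obtaining a genuinely $o(h_B)$ (not merely $O(\sqrt{h_B})$) error may take care. One natural route is to feed the \emph{uniform} estimate of Theorem~\ref{th:eff}, applied fibrewise to $f_t$ and to $f$ over $k$, into a Tate-style telescoping over the base, so that the positivity of $\hat h_{f,k}(C_f)$ forces the relevant line bundle on $B$ to be big and hence its height to dominate $h_B$.
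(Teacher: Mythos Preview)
Parts (i)--(iii) are correct and track the paper's proof closely. The paper handles conjugation invariance via the identity $\hat h_{f^\alpha}(D)=\hat h_f(\alpha_*D)$ (established locally in Lemma~\ref{lem:divescape}(vii)) together with $C_{f^M}=M^*C_f$, and makes the passage from $h_{\mathrm{Hom}_d^N}$ to $h_{\mathsf M_d^N}$ explicit via Lemma~\ref{lem:jhsquo}; these are precisely the ingredients you gesture at.

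For (iv) you correctly reduce the lower bound to a specialization statement $\hat h_{f_t}(C_{f_t})=\hat h_{f,k}(C_f)\,h_B(t)+o(h_B(t))$, and you are right that this is where the content lies. The paper proves this separately (Theorem~\ref{th:silv}), and the mechanism is not quite the one your sketch lands on. Feeding Theorem~\ref{th:eff} in fibrewise and over $k=K(B)$, together with $h(D_t)=h(D)h(t)+O_D(1)$ and $h(f_t)=O_f(h(t))$, yields only
\[
\limsup_{h(t)\to\infty}\left|\frac{\hat h_{f_t}(D_t)}{h(t)}-\hat h_{f,k}(D)\right|\;\leq\; C(f)\deg(D),
\]
which is not yet $o(1)$. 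The crucial step is to observe that the left-hand side is invariant under replacing $D$ by $f^k_*D$ and dividing by $d^{kN}$ (since $\hat h_f(f_*D)=d^N\hat h_f(D)$ on both the arithmetic and the geometric side), whereas the right-hand side becomes $C(f)\deg(f^k_*D)/d^{kN}=C(f)d^{-k}\deg(D)\to 0$. So the ``Tate trick'' is iteration of $f$ on the divisor, exploiting that the error in Theorem~\ref{th:eff} is linear in $\deg(D)$ while the canonical height scales by $d^N$ and the degree only by $d^{N-1}$; it is not a telescoping over the base curve, and no bigness argument on $B$ is needed once this is in hand.
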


Generalizing a conjecture of Silverman~\cite[Conjecture~6.29, p.~101]{barbados}, one might reasonably expect that $\hat{h}_{\mathrm{crit}}$ is in fact comparable to an ample Weil height on (most of) $\mathsf{M}_d^N$.

\begin{conjecture}\label{conj:crit}\footnote{Thomas Gauthier has communicated to me that he and Gabriel Vigny have recently proven this conjecture.}
For any $N, d$ there exists a proper Zariski-closed $Z\subseteq \mathsf{M}_d^N$ such that for any ample Weil height $h_{\mathsf{M}_d^N}$ on $\mathsf{M}_d^N$,
\[\hat{h}_{\mathrm{crit}}\asymp h_{\mathsf{M}_d^N}\]
on $\mathsf{M}_d^N\setminus Z$.
\end{conjecture}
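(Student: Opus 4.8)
The plan is to realize $\hat h_{\mathrm{crit}}$ as the height associated to a nef, adelically metrized line bundle on a projective compactification of $\mathsf M_d^N$, to reduce the conjecture to the \emph{bigness} of that line bundle, and then to establish bigness through a non-vanishing statement for the associated bifurcation measure. For the packaging, let $\mathcal C \to \mathsf M_d^N$ be the universal critical divisor, so that $\hat h_{\mathrm{crit}}(f) = \lim_k h((f^k)_* C_f)/d^{Nk}$. Applied fibrewise, Theorem~\ref{th:eff} bounds each increment $|h((f^{k+1})_* C_f)/d^{N(k+1)} - h((f^k)_* C_f)/d^{Nk}|$ by a quantity that is $O(h_{\mathsf M_d^N}(f)+1)$ and decays geometrically in $k$; by the standard telescoping argument the limit differs by $O(1)$ from the Weil height $h_D$ of an $\RR$-divisor class $D$ on a projective model $\overline{\mathsf M}_d^N$ (a GIT compactification on which $\mathcal C$ and its iterated pushforwards extend). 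Equipping $D$ with the adelic metric whose local heights are the locally uniform limits of the local heights of $(f^k)_* C_f$ gives a metrized bundle $\overline{\mathcal L}$, and since $\hat h_{\mathrm{crit}} \geq 0$ with each local metric a limit of semipositive ones, $\overline{\mathcal L}$ is nef. A technical point, handled as in the $N=1$ case, is controlling $\hat h_{\mathrm{crit}}$ along the boundary $\partial\overline{\mathsf M}_d^N$ where maps degenerate; the relevant growth is logarithmic and does not affect nefness of a suitable model.

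Next I would reduce the conjecture to bigness of $\overline{\mathcal L}$. For a nef $\RR$-line bundle $\mathcal L$ on a projective variety $V$, bigness is equivalent to positivity of the top self-intersection $(\mathcal L^{\dim V})$. If $\mathcal L$ is big, Kodaira's lemma produces $\epsilon > 0$, an ample $A$, and an effective $\QQ$-divisor $E$ with $\mathcal L - \epsilon A \sim_\QQ E$; functoriality of heights then gives $h_{\mathcal L} \geq \epsilon h_A - O(1)$ on $V \setminus \operatorname{supp}(E)$. Taking the intersection over all such $E$, one may set $Z = \mathbf B_+(\overline{\mathcal L}) \cap \mathsf M_d^N$, a proper Zariski-closed subset; since any two ample Weil heights on $V$ are comparable up to $O(1)$, and $\hat h_{\mathrm{crit}} \ll h_{\mathsf M_d^N}$ by Theorem~\ref{th:crit}(iii), this yields $\hat h_{\mathrm{crit}} \asymp h_{\mathsf M_d^N}$ off $Z$. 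Thus the conjecture is equivalent to the single inequality $(\overline{\mathcal L}^{\,\dim \mathsf M_d^N}) > 0$.

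To prove that, I would identify $(\overline{\mathcal L}^{\,\dim \mathsf M_d^N})$ with the total mass of the bifurcation measure $\mu_{\mathrm{bif}} := (dd^c G_{\mathrm{crit}})^{\dim \mathsf M_d^N}$, where $G_{\mathrm{crit}}$ is an archimedean local potential of $\hat h_{\mathrm{crit}}$, and then show this mass is positive. The strategy is to exhibit a \emph{maximally bifurcating} parameter $f_0 \in \mathsf M_d^N(\CC)$: near $f_0$ one decomposes $G_{\mathrm{crit}}$ as a sum of partial potentials recording the activity of the individual critical orbit branches, and arranges that $\dim \mathsf M_d^N$ of these have linearly independent differentials at $f_0$. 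Natural candidates are Misiurewicz-type maps or controlled perturbations of postcritically finite maps, with Thurston rigidity ensuring that the deformations are unobstructed; the wedge product of the corresponding closed positive $(1,1)$-currents is then nonzero in a neighbourhood of $f_0$, and integrating produces positive mass. For $N = 1$ this non-vanishing is the theorem of Bassanelli--Berteloot, refined by Dujardin--Favre; for general $N$ the construction of sufficiently many independently active critical branches is the essential new input.

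The main obstacle is precisely this last step — both the construction, in every $\mathsf M_d^N$, of a parameter with the required transversality, and the pluripotential-theoretic machinery (continuity or laminarity of the currents $dd^c G_{\mathrm{crit}}$, and well-definedness of their top wedge power) needed to conclude that $\mu_{\mathrm{bif}} \neq 0$. The arithmetic packaging of the earlier steps is, by contrast, essentially formal once the explicit, linear-in-$\deg(D)$ estimates of this paper are in hand: it is exactly these estimates that make the limit defining $\hat h_{\mathrm{crit}}$ a height rather than merely a function, and that control its growth toward the boundary of moduli space.
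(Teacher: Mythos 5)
This statement is a \emph{conjecture} in the paper, not a theorem: the text explicitly presents it as open (proving only the one-sided bound $\hat{h}_{\mathrm{crit}}\ll h_{\mathsf{M}_d^N}$ and the two-sided bound along one-parameter families in Theorem~\ref{th:crit}, and noting that the full conjecture is known only for $N=1$ and in special cases for $N>1$). So there is no proof in the paper to compare your argument against, and your proposal must be judged on its own; as written it is a program, not a proof, with two essential gaps left open.

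First, the ``arithmetic packaging'' you describe as essentially formal is not. Theorem~\ref{th:eff}, applied to $D=(f^k)_*C_f$ and normalized by $d^{Nk}$, gives $\hat{h}_{\mathrm{crit}}(f)=h(C_f)+O\bigl(h_{\operatorname{Hom}_d^N}(f)\bigr)$ --- an error term \emph{linear} in the height, not $O(1)$. Upgrading this to the statement that $\hat{h}_{\mathrm{crit}}$ is, up to $O(1)$, the height of a nef adelically metrized $\RR$-line bundle on a projective compactification $\overline{\mathsf{M}}_d^N$ requires locally uniform convergence of the local potentials up to and including the boundary, i.e.\ a complete analysis of how $G_{F,v}(J_F)$ degenerates as $\operatorname{Res}(F)\to 0$. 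For $N=1$ that boundary analysis is precisely the main content of \cite{duke}; for $N>1$ it is a substantial open problem about degenerations of endomorphisms of $\PP^N$ (cf.\ \cite{favre}), not a routine telescoping argument. Without it you do not get a line bundle whose bigness you can test. Second, even granting the packaging and the reduction to $(\mathcal{L}^{\dim\mathsf{M}_d^N})>0$, the identification of that number with the mass of $\mu_{\mathrm{bif}}$ and, above all, the \emph{positivity} of that mass for every $N$ and $d$ --- the construction of a parameter with $\dim\mathsf{M}_d^N$ independently active critical branches and the control of the top wedge power of the bifurcation current --- is exactly the theorem you would need to prove and do not. You have correctly located where the difficulty lies, but locating it is not the same as resolving it; neither gap is closed, so the conjecture is not established by this argument.
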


This conjecture implies, in particular, that the positive-dimensional families of PCF maps on $\PP^N$ are all contained in some proper, Zariski closed subset of $\mathsf{M}_d^N$. In fact, a much stronger conjecture is presented in \cite{pcfzar}. Of course, it would be of great interest to characterize the (minimal) exceptional set $Z$, which we suspect to simply be the Latt\`{e}s locus, as it is in the case $N=1$ (where the conjecture is known to be true~\cite{duke}).

 For $N>1$,  this conjecture has been proven in certain very special cases~\cite{mincritA, mincritP}, which concern compositions of a linear map and a power map. A variant of the conjecture, where the difference lies in the definition of the critical height, appears in~\cite{pcfpn}. There, we defined a different critical height, for certain regular polynomial endomorphisms, which we here denote by $\tilde{h}_{\mathrm{crit}}(f)$. At this point, the best we can say about the relation between the two quantities is that
\[\hat{h}_{\mathrm{crit}}(f)\leq \tilde{h}_{\mathrm{crit}}(f)(N+1)(d-1),\]
which unfortunately is an inequality in the wrong direction, if one wishes to exploit the non-trivial side of the estimate $\tilde{h}_{\mathrm{crit}}(f)\asymp h(f)$ established in \cite{pcfpn} (for a very restricted class of $f$).

\section{Local contributions: lifting to $\AA^{N+1}$}\label{sec:local}

Let $\CC_v$ be an algebraically closed field, complete with respect to some absolute value $|\cdot|_v$ (we will drop the subscript for legibility). We write
\[\|x_0, ..., x_{N}\|=\max\{|x_0|, ..., |x_{N}|\},\]
and similarly $\|\Phi\|$ for the maximum absolute value of a polynomial in any number of variables (the Gauss norm, when $v$ is non-archimedean). Finally, we write $m(\Phi)$ for the (log of the) Gauss norm when $v$ is non-archimedean, and for the Mahler measure
\[m(F)=\int_{(S^1)^{N+1}}\log|\Phi(x_0, ..., x_{N})|\frac{dx_1}{x_1}\cdots \frac{dx_{N+1}}{x_{N+1}}\]
when $\CC_v=\CC$. We note that it follows immediately (or from the Gauss lemma) that
\[m(\Phi\Psi)=m(\Phi)+m(\Psi).\]

We lift the morphism $f:\PP^N\to\PP^N$ of degree $d\geq 2$ to a map $F:\AA^{N+1}_*\to \AA^{N+1}_*$, and write $\Res(F)$ for the homogeneous Macaulay resultant of the homogeneous forms defining this lift. For any homogeneous form $\Phi$, we define
the pullback by the usual construction:
\[F^*(\Phi)=\Phi\circ F.\]
It is immediate from the definition that $\deg(F^*\Phi)=\deg(F)\deg(\Phi)$.

We define the pushforward of the form $\Phi$  by
\[F_*\Phi(\mathbf{Y})=\prod_{F(\mathbf{X})=\mathbf{Y}}\Phi(\mathbf{X}),\]
where the product is over solutions in $\overline{\CC_v(\mathbf{Y})}$.
Note that this is, \emph{a priori}, and element of a finite extension of the field $\CC_v(\mathbf{Y})$, but it is easy to check that it is a homogeneous form in $\mathbf{Y}$, of degree $\deg(F)^N\deg(\Phi)$.
It is convenient to note that this can be computed via a resultant. Define
\begin{equation}\label{eq:push}R(F, \Phi)(\mathbf{Y})=\Res_{X_0, ..., X_{N+1}}(F_0(\mathbf{X})-X_{N+1}^dY_0, ..., F_{N}(\mathbf{X})-X_{N+1}^dY_{N}, \Phi(\mathbf{X})).\end{equation}
Note that this resultant can be computed over the ring generated by the coefficients of the $F_i$ and $\Phi$. It follows from the Poisson product formula for the resultant~\cite{macaulay}
that
\[R(F, \Phi)=\Res(F)^{\deg(\Phi)}F_*\Phi.\]

Both sides are homogeneous of degree $d^N\deg(D)$ (in the variables $\mathbf{Y}$), and they differ only in the case of bad reduction. In other words, if $A$ is a Dedekind domain, and we are working over $\PP^N_A$, we may define the push-forward of the divisor defined by $\Phi=0$ to be the (not necessarily effective) divisor defined by $\Res(F)^{-\deg(\Phi)}R(F, \Phi)$.

Note that it is immediate from the definition that $F_*(\Phi\Psi)=(F_*\Phi)(F_*\Psi)$, and that $F_*\Phi=0$ defines the pushforward under the map $F:\AA^{N+1}_*\to \AA^{N+1}_*$ of the divisor defined by $\Phi=0$. Some caution is required in returning to the context of $\PP^N$, however. Specifically, if $\Phi=0$ defines the divisor $D\subseteq\PP^N$, then $F_*\Phi$ defines the divisor $\deg(f)f_*D\subseteq \PP^N$, not $f_*D$ (which has the same support as $F_*\Phi=0$, but a different degree).

Note that we have
\[\deg(F^*\Phi)=d\deg(\Phi),\]
\[\deg(F_*\Phi)=d^{N}\deg(\Phi),\]
and
\[F_*F^*\Phi=F_* (\Phi\circ F)= \prod_{F(\mathbf{X})=\mathbf{Y}}\Phi(F(\mathbf{X}))=\Phi^{d^{N+1}}.\] 
At the level of divisors on the arithmetic scheme $\PP^N_{\mathcal{O}_v}$, this gives
\[f_*f^*D = d^ND,\]
as expected.

At this point we will also introduce
\[\lambda_{\operatorname{Hom}_d^N}(f) = -\log|\Res(F)|+(N+1)d^N\log\|F\|,\]
noting that this is independent of the choice of lift $F$ of $f$. 
Note that $\lambda_{\operatorname{Hom}_d^N}$ is the N\'{e}ron function on $\operatorname{Rat}_d^N=\PP^{\binom{N+d}{d}}$ relative to the divisor defined by $\Res(f)=0$ (i.e., the complement of $\operatorname{Hom}_d^N$), with respect to the standard metric.

As it will be useful below, we cite here a result of Mahler~\cite{mahler}. 
\begin{lemma}[Mahler~\cite{mahler}]\label{lem:mahler}
Let $\|\Phi\|_1$ be the $L_1$ norm of $\Phi$, let $\|\Phi\|$ be the sup norm, and let $m(\Phi)$ be the logarithmic Mahler measure. Then
\[m(\Phi)\leq \log\|\Phi\|_1\leq m(\Phi)+N\deg(\Phi)\log 2\]
and
\[m(\Phi)-\frac{N}{2}\log(\deg(\Phi)+1)\leq \log\|\Phi\|\leq m(\Phi)+N\deg(\Phi)\log 2\]
\end{lemma}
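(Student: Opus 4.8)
The plan is to deduce the lemma from its affine analogue by dehomogenizing, and then to handle the four inequalities separately, only one of which carries real content. Write $\delta=\deg(\Phi)$ and $\phi(y_1,\dots,y_N)=\Phi(1,y_1,\dots,y_N)$. Because $\Phi$ is homogeneous of degree $\delta$, for points on the polytorus $|\Phi(x_0,\dots,x_N)|=|\Phi(1,x_1/x_0,\dots,x_N/x_0)|$, and for each fixed $x_0\in S^1$ the map $(x_1,\dots,x_N)\mapsto(x_1/x_0,\dots,x_N/x_0)$ preserves normalized Haar measure on $(S^1)^N$; hence $m(\Phi)=m(\phi)$. The coefficients of $\phi$ are precisely those of $\Phi$, so $\|\Phi\|_1=\|\phi\|_1$ and $\|\Phi\|=\|\phi\|$, and $\phi$ has degree at most $d_j\le\delta$ in its $j$-th variable. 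It therefore suffices to prove, for any polynomial $\phi$ in $N$ variables of degree at most $d_j$ in $y_j$: (a) $m(\phi)\le\log\|\phi\|_1$; (b) $\log\|\phi\|_1\le m(\phi)+(d_1+\dots+d_N)\log2$; (c) $m(\phi)\le\log\|\phi\|+\tfrac12\log\big(\prod_{j}(d_j+1)\big)$. The two stated inequalities then follow from (a)--(c) together with the trivial $\|\phi\|\le\|\phi\|_1$ and the crude estimates $d_j\le\delta$ and $\prod_j(d_j+1)\le(\delta+1)^N$.

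Inequalities (a) and (c) are immediate from Jensen's inequality. For (a): on the polytorus $|\phi|\le\sum_\alpha|c_\alpha|=\|\phi\|_1$, so $m(\phi)=\int\log|\phi|\le\log\int|\phi|\le\log\|\phi\|_1$. For (c): applying Jensen to $\log|\phi|^2$ and then Parseval's identity on $(S^1)^N$, $\exp(2m(\phi))\le\int_{(S^1)^N}|\phi|^2=\sum_\alpha|c_\alpha|^2\le\big(\prod_j(d_j+1)\big)\|\phi\|^2$, since $\phi$ involves at most $\prod_j(d_j+1)$ monomials; now take logarithms.

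The substance is (b), which I would prove by induction on $N$ starting from the one-variable case. If $g(z)=c\prod_{i=1}^n(z-\alpha_i)$ then each coefficient of $g$ is $\pm c$ times an elementary symmetric function $e_k(\alpha_1,\dots,\alpha_n)$, and $|e_k(\alpha)|\le\binom{n}{k}\prod_i\max(1,|\alpha_i|)$; summing over $k$ gives $\|g\|_1\le|c|\,2^{n}\prod_i\max(1,|\alpha_i|)=2^{n}M(g)$, which is (b) for $N=1$. For the inductive step, write $\phi=\sum_{k=0}^{d_N}\phi_k(y_1,\dots,y_{N-1})\,y_N^k$ (assuming, as we may, that $\phi_{d_N}\not\equiv0$) and factor $\phi=\phi_{d_N}\prod_i(y_N-\beta_i)$ over $\overline{\CC(y_1,\dots,y_{N-1})}$. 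For almost every $y'\in(S^1)^{N-1}$ the one-variable coefficient bound applied with roots $\beta_i(y')$ gives $|\phi_k(y')|\le\binom{d_N}{k}|\phi_{d_N}(y')|\prod_i\max(1,|\beta_i(y')|)$, and by Jensen's formula in $y_N$ the right-hand side equals $\binom{d_N}{k}\exp\!\big(m_{y_N}(\phi(y',\,\cdot\,))\big)$, the exponential of the one-variable logarithmic Mahler measure of $\phi(y',\,\cdot\,)$; integrating over $y'$ and using Fubini yields $m(\phi_k)\le\log\binom{d_N}{k}+m(\phi)$, hence $\sum_k M(\phi_k)\le M(\phi)\sum_k\binom{d_N}{k}=2^{d_N}M(\phi)$. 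Since $\|\phi\|_1=\sum_k\|\phi_k\|_1$ and each $\phi_k$ has degree at most $d_j$ in $y_j$ for $j<N$, the inductive hypothesis gives $\|\phi_k\|_1\le2^{d_1+\dots+d_{N-1}}M(\phi_k)$, and summing over $k$ produces $\|\phi\|_1\le2^{d_1+\dots+d_N}M(\phi)$, which is (b).

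The one genuine obstacle is the inductive step of (b): one must justify the factorization of $\phi$ into linear factors in $y_N$ over $\overline{\CC(y_1,\dots,y_{N-1})}$ and the fibrewise use of Jensen's formula for almost every $y'$ on the torus, in particular dealing with the measure-zero loci where $\phi_{d_N}$ vanishes or where the roots $\beta_i(y')$ collide. Everything else is bookkeeping, and it is worth noting that the individual degrees $d_j$ enter only through the wasteful bound $d_j\le\deg(\Phi)$, so the resulting error terms could presumably be sharpened.
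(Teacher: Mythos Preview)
Your argument is correct. Note, however, that the paper does not supply its own proof of this lemma: it is simply cited from Mahler~\cite{mahler}, so there is nothing in the paper to compare against beyond the statement itself. What you have written is essentially the standard proof (and is close to Mahler's original): the reductions (a) and (c) via Jensen/Parseval are routine, and the induction for (b) via the coefficient estimate $|\phi_k(y')|\le\binom{d_N}{k}\exp\bigl(m_{y_N}(\phi(y',\cdot))\bigr)$ followed by integration over $y'$ is the right idea. Your dehomogenization step correctly accounts for the constant $N$ (rather than $N+1$) in the error terms, and the caveats you flag about the zero locus of $\phi_{d_N}$ are genuine but standard measure-zero issues that do not affect the integral.
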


\begin{remark}
In particular, if $h(D)$ is the (normalized) Philippon height of a divisor $D$, and $h(c_D)$ is the height of the tuple of coefficients, then
\[h(c_D)-N\deg(D)\log 2\leq h(D)\leq h(c_D)+\frac{N}{2}\log(\deg(D)+1)\leq h(c_D)+\frac{N}{2}\deg(D).\]
This is useful for computation, as $h(D)$ is more natural, but $h(c_D)$ is more immediately computable.
\end{remark}

\subsection{Escape rates for points}

We first present a lemma on the elimination of variables. This result is due to Macaulay~\cite{macaulay} (see also van der Waerden~\cite{vdw}), modified slightly to allow for explicit bounds. While much work has been done on effective elimination of variables~\cite{krick}, we were unable to find a more recent result which provides an estimate on the eliminating forms which is both explicit and uniform in the coefficients of polynomials in question.
\begin{lemma}\label{lem:elim}
Let $F_0, ..., F_N$ be homogeneous forms of degree $d$ in $\mathbf{X}$ with generic coefficients $\mathbf{F}$.  Set $c_1=0$ when $|\cdot|$ is non-archimedean, and otherwise
\[c_1=r\log(2^{s+1}rs^s)\]
for
\[r=(N+1)(d+1)^N(d^N+1)^{(N+1)(d+1)^N}\] 
and
\[s=((N+1)(d-1)+2)^N.\]
 Then there exist integers $a\neq 0$ and polynomials $A_{i, j}\in \mathbb{Z}[\mathbf{F}, \mathbf{X}]$ with $\log\|A_{i, j}\|\leq c_1$ such that
\begin{equation}\label{eq:modelim}a\Res(F)X_j^{(N+1)(d-1)+1}=\sum_{i=0}^N F_i(\mathbf{X}) A_{i, j}(\mathbf{X})\end{equation}
for all $0\leq j\leq N$.
\end{lemma}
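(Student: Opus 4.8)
The plan is to run the classical Macaulay inertia-form argument, tracking the sizes of all auxiliary polynomials at each stage. Recall the set-up: one works with the $\binom{N+d}{d}$ monomials of degree $d$ in $X_0,\dots,X_N$ and considers, for a sufficiently large degree $D$, the linear map sending a tuple $(G_0,\dots,G_N)$ of forms of degree $D-d$ to $\sum_i F_i G_i$, a form of degree $D$. The image is the degree-$D$ part of the ideal $(F_0,\dots,F_N)$, and Macaulay's theorem tells us that for $D = (N+1)(d-1)+1$ a suitable maximal minor of the associated matrix (entries linear in the generic coefficients $\mathbf{F}$) is a nonzero multiple of $\Res(F)$; more precisely, every monomial $X_j^{D}$ times $\Res(F)$ (up to an integer factor $a$) lies in the ideal generated by the $F_i$, with the cofactors $A_{i,j}$ obtained by Cramer's rule from that matrix. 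So the first step is to set $D=(N+1)(d-1)+1$, write down the Macaulay matrix, and identify $A_{i,j}$ explicitly as ratios of minors — which, after clearing the common denominator $a$, become polynomial expressions in $\mathbf{F}$ and $\mathbf{X}$.

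The second step is the size estimate, which is where the constants $r$ and $s$ enter. The matrix in question has size governed by the number of monomials involved: the parameter $s=((N+1)(d-1)+2)^N$ is (a bound for) the number of monomials of degree $D$ in $N+1$ variables, hence the dimension of the square system one inverts, and $r$ is a bound on the total degree (in $\mathbf{F}$) and monomial count of the relevant minors — a Cramer's-rule cofactor is an $s\times s$ determinant, so a sum of $s!$ products of $s$ entries, each entry being a coefficient of some $F_i$ (degree $1$ in $\mathbf{F}$) possibly times a monomial in $\mathbf{X}$. In the non-archimedean case the ultrametric inequality makes the Gauss norm of a determinant at most the product of the Gauss norms of the entries, which are all $1$ for the generic matrix; that is why $c_1=0$ there. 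In the archimedean case one loses a factorial-type factor from expanding the determinant and from the fact that $A_{i,j}$ has many monomials: bounding $\|A_{i,j}\|$ by the $L^1$-norm of the cofactor and using $s!\le s^s$ together with the number of terms gives a bound of the shape $r\log(2^{s+1}rs^s)$ after taking logarithms. I would phrase this cleanly by first bounding the number of monomials appearing in $a\Res(F)X_j^D$ as a polynomial in $\mathbf{F},\mathbf{X}$, then bounding each coefficient, then combining.

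The main obstacle is the bookkeeping in the second step: making Macaulay's construction sufficiently explicit that the minors, the integer $a$, and the cofactors $A_{i,j}$ are concrete enough to estimate, and then choosing the (deliberately generous) parameters $r,s$ so the inequality $\log\|A_{i,j}\|\le c_1$ holds with room to spare. One has to be a little careful that $D=(N+1)(d-1)+1$ is genuinely in the range where Macaulay's resultant formula applies (the Macaulay bound for $N+1$ forms of degree $d$ is $(N+1)(d-1)+1$, so this is exactly the threshold), and that the relation \eqref{eq:modelim} holds identically in $\ZZ[\mathbf{F},\mathbf{X}]$ rather than just generically — this follows because both sides are polynomials and the identity holds on a Zariski-dense set, but it should be stated. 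Everything else — the degree count on the left, the homogeneity of the $A_{i,j}$ in $\mathbf{X}$ — is routine once the matrix is written down.
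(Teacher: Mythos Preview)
Your overall shape is right, but there is a real gap in how you pass from the Macaulay matrix to an identity with $a\Res(F)$ on the left for an \emph{integer} $a$. Cramer's rule applied to the $s\times s$ Macaulay matrix itself (entries linear in $\mathbf{F}$) yields $\Delta(\mathbf{F})\,X_j^{t} = \sum_i F_i B_{i,j}$, where $\Delta$ is the full Macaulay determinant. But $\Delta = A(\mathbf{F})\Res(F)$ with $A(\mathbf{F})$ the extraneous factor, a genuine polynomial in $\mathbf{F}$ rather than an integer; ``clearing the common denominator $a$'' therefore does not produce~\eqref{eq:modelim} as stated, and your non-archimedean argument (Gauss norm of those cofactors is $\le 1$) bounds the wrong objects. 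Your interpretation of $r$ is correspondingly off: it is not a degree or monomial count attached to minors of the Macaulay matrix.

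The paper's route is a two-step argument you are missing. First it invokes Macaulay's theorem for the \emph{existence} of a solution to~\eqref{eq:modelim} with $a=1$ and $A_{i,j}\in\mathbb{Z}[\mathbf{F},\mathbf{X}]$, with no size control; this already gives $c_1=0$ in the non-archimedean case. Then, for the archimedean bound, it expands both sides of~\eqref{eq:modelim} and equates coefficients of every monomial in $(\mathbf{F},\mathbf{X})$, obtaining a homogeneous linear system over $\mathbb{Z}$ in the unknowns $a$ and the coefficients of the $A_{i,j}$. Because that system has a nontrivial solution with $a=1$, a Cramer/Siegel-type lemma produces another solution with $a\neq 0$ in which every unknown is a signed $r\times r$ minor of the integer coefficient matrix, where $r$ is the rank of \emph{this} system (bounded by the number of monomial equations --- hence the stated formula for $r$). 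The parameter $s$ enters only to bound $\log\|\Res(F)\|\le s\log(2s)$, and through it the entries of the integer matrix, giving $\log\|A_{i,j}\|\le r(s\log(2s)+\log 2)+r\log r = c_1$.
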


\begin{proof}
First, we recall the construction of $\Res(F)$. Let $t=(N+1)(d-1)+1$, let $M_0$ be the span of monomials in $\mathbf{X}$ divisible by $X_0^d$, over $\ZZ[\mathbf{F}]$, let $M_1$ be the span of monomials divisible by $X_1^d$ but not by $X_0^d$, and so on. If $W$ is the space of homogeneous polynomials of degree $t$, then
\[T:M_0\times\cdots\times M_N\to W\]
by
\[T(H_0, ..., H_N)=\frac{H_0}{X_0^d}F_0+\cdots+ \frac{H_N}{X_N^d}F_N\]
is a linear map, with determinant $\Delta(\mathbf{F})\in \ZZ[\mathbf{F}]$ with respect to the standard monomial bases, in the same order, on either side. Then $\Res(F)$ is some irreducible factor of $\Delta$, and indeed is the greatest common factor over all permutations of the $F_i$. Note that, since $\Delta$ is the determinant of a $s\times s$ matrix whose entries are picked from the tuple $\mathbf{F}$, where $s\leq (t+1)^N$ is the number of monomials of degree $t$ in $N+1$ variables, we have that $\Delta$ is a sum of of $s!$ signed monomials in $\mathbf{F}$, and hence
\[\log\|\Delta(\mathbf{F})\|\leq s\log^+|s|.\]
Since $\Res(F)$ is a divisor of $\Delta(\mathbf{F})$, say $\Delta(\mathbf{F})=A(\mathbf{F})\Res(F)$ with $A(\mathbf{F})\in \ZZ[\mathbf{F}]$, and $\Delta(F)$ has degree $s$ in $F$, we have
\[\log\|\Res(F)\|\leq \log\|\Res(F)\|+\log\|A(\mathbf{F})\|\leq \log\|\Delta(\mathbf{F})\|+s\log^+|2|\leq s\log^+|2s|\]
from lemmas of Gauss and Gelfond (a similar estimate appears in~\cite[Proposition~7]{wustholz}).

From Macaulay~\cite{macaulay}, we know that there is a solution to~\eqref{eq:modelim} for $a=1$, which already treats the non-archimedean claim. For the archimedean claim, we use Cramer's Rule to produce from this solution a new solution with bounded coefficients. In particular, in each equation~\eqref{eq:modelim} we can equate coefficients of monomials in $\mathbf{X}, \mathbf{F}$ to obtain a system of linear equations satisfied by $a$ and the coefficients of the $A_{i, j}$.
If this system has rank $r$, then there is a solution with $a\neq 0$ and such that $a$ and the coefficients of the $A_{i, j}$ are signed $r\times r$ minors of the coefficient matrix (see, e.g., \cite[Lemma~6]{varx}). If the matrix is $M$, it follows that
\[\log|a|, \log\|A_{i, j}\|\leq r\log\|M\|+r\log^+|r|.\]
 Since the coefficients of the above system of linear equations differ from those of $\Res(F)$ by at most $\pm 1$, we then have
\[\log\|M\|\leq s\log^+|2s|+\log^+|2|,\]
whence
\[\log|a|, \log\|A_{i, j}\|\leq rs\log^+|2s|+r\log^+|2r|.\]
We already have \[s\leq (t+1)^N= ((N+1)(d-1)+2)^N,\] and we now estimate $r$. Note that both sides of~\eqref{eq:modelim}  are multi-homogeneous in $\mathbf{X}$, with degree $d$, and in the coefficients of each $F_i$, with degree $d^N$. Since there are at most $(d+1)^N$ such monomials in $\mathbf{X}$, and at most $(d^N+1)^{(d+1)^N}$ such monomials in the (at most $(d+1)^N$) coefficients of each $F_i$, the number of linear equations obtained by equating the various monomial coefficients in the equations~\eqref{eq:modelim} is at most
\[ (N+1)(d+1)^N(d^N+1)^{(N+1)(d+1)^N},\]
which is an upper bound on $r$, the rank of the system.
\end{proof}

The following is completely standard (see, e.g., \cite{ads}), but we write it out explicitly for the constants.

\begin{lemma}\label{lem:points est}
There exist constants $c_3, c_4$, depending only on $d$ and $N$, such that for all $P\in\AA^{N+1}_*$ and $F$ (of degree $d$) we have
\[d\log\|P\|-\lambda_{\operatorname{Hom}_d^N}(f)+\log\|F\|-c_3\leq \log\|F(P)\|\leq d\log\|P\| +\log\|F\|+c_4.\]
Furthermore, if $v$ is non-archimedean, then we may take $c_3=c_4=0$. Otherwise, 
\[c_3=\log^+|N+1|+N\log^+|N(d-1)+1|+(d+1)^N\log^+|(N+1)d^N|+c_1\]
\[c_4=N\log^+|d+1|\]
\end{lemma}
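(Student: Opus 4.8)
The estimate splits into an easy upper bound, handled by elementary monomial counting, and a substantive lower bound, which rests on the elimination identity supplied by Lemma~\ref{lem:elim}. I would dispose of the two directions separately.

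For the upper bound, write $F=(F_0,\dots,F_N)$ and note that each $F_i$ is a sum of at most $\binom{N+d}{d}\le (d+1)^N$ monomials of $\mathbf{X}$-degree $d$, each carrying a coefficient of absolute value at most $\|F\|$. Evaluating at $P$ and applying the triangle inequality in the archimedean case yields $|F_i(P)|\le (d+1)^N\|F\|\,\|P\|^d$, hence $\log\|F(P)\|\le d\log\|P\|+\log\|F\|+N\log(d+1)$; over a non-archimedean $v$ the ultrametric inequality removes the monomial count entirely. This gives the claimed upper bound, with $c_4=N\log^+|d+1|$ (respectively $c_4=0$).

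For the lower bound, I would apply Lemma~\ref{lem:elim} to the component forms of the chosen lift $F$: since~\eqref{eq:modelim} is an identity in $\ZZ[\mathbf{F},\mathbf{X}]$, it persists after specializing $\mathbf{F}$ to the coefficients of $F$ and $\mathbf{X}$ to $P$. Choose the index $j$ with $|P_j|=\|P\|$, set $t=(N+1)(d-1)+1$, and evaluate~\eqref{eq:modelim} at $\mathbf{X}=P$; taking absolute values, and using $|a|\ge 1$ in the archimedean case (a nonzero integer) and $a=1$ directly in the non-archimedean case, gives
\[
|\Res(F)|\,\|P\|^{t}\;\le\;(N+1)\,\|F(P)\|\cdot\max_i|A_{i,j}(P)|.
\]
Now $A_{i,j}$ is multihomogeneous of $\mathbf{X}$-degree $t-d=N(d-1)$ and of total $\mathbf{F}$-degree $(N+1)d^N-1$, with integer coefficients bounded by $e^{c_1}$, so estimating each monomial crudely yields
\[
|A_{i,j}(P)|\;\le\;e^{c_1}\cdot M\cdot\|F\|^{(N+1)d^N-1}\,\|P\|^{N(d-1)},
\]
where $M$ bounds the number of monomials of $A_{i,j}$, and where one may drop the factor $N+1$ and take $M=1$ (ultrametric inequality, integer coefficients) when $v$ is non-archimedean. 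Since $t-N(d-1)=d$, dividing through and taking logarithms produces
\[
\log\|F(P)\|\;\ge\;d\log\|P\|+\log|\Res(F)|-\bigl((N+1)d^N-1\bigr)\log\|F\|-\log(N+1)-c_1-\log M,
\]
and since $\lambda_{\operatorname{Hom}_d^N}(f)=-\log|\Res(F)|+(N+1)d^N\log\|F\|$, this is precisely the asserted inequality with the error term $\log(N+1)+c_1+\log M$ named $c_3$. In the non-archimedean case all of these terms vanish, giving $c_3=0$; in the archimedean case one bounds $M$ by a product of monomial counts — at most $(N(d-1)+1)^N$ monomials in $\mathbf{X}$ (degree $N(d-1)$ in $N+1$ variables) and a bound of the form $((N+1)d^N)^{(d+1)^N}$ in the coefficients of $F$ — to arrive at the displayed $c_3$.

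The main obstacle is the step bounding $|A_{i,j}(P)|$: one must pin down the $\mathbf{X}$- and $\mathbf{F}$-multidegrees of the eliminating forms exactly, so that the exponents of $\|F\|$ and $\|P\|$ line up with the definition of $\lambda_{\operatorname{Hom}_d^N}$, while keeping both the monomial count and the (large) coefficient bound $c_1$ from Lemma~\ref{lem:elim} explicit and uniform in $f$. A secondary consistency check is scale-invariance: replacing $F$ by $cF$ multiplies $\Res(F)$ by $c^{(N+1)d^N}$ and each $A_{i,j}$ by $c^{(N+1)d^N-1}$, so the identity above — and hence the final inequality — respects the fact that $\lambda_{\operatorname{Hom}_d^N}(f)$ does not depend on the choice of lift.
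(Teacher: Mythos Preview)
Your proof is correct and follows essentially the same route as the paper: the upper bound via the triangle inequality and the monomial count $\binom{N+d}{d}\le(d+1)^N$, and the lower bound by specializing the elimination identity~\eqref{eq:modelim} from Lemma~\ref{lem:elim}, bounding $|A_{i,j}(P)|$ via its $\mathbf{X}$- and $\mathbf{F}$-multidegrees, and then rearranging using the definition of $\lambda_{\operatorname{Hom}_d^N}(f)$. The only cosmetic difference is that the paper specializes $\mathbf{F}$ first and then $\mathbf{X}$ in two separate steps (tracking the monomial counts $(d+1)^N\log^+|(N+1)d^N|$ and $N\log^+|t-d+1|=N\log^+|N(d-1)+1|$ separately), whereas you bundle them into a single product $M$; the resulting constant $c_3$ is the same.
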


\begin{proof}
One direction follows easily by the triangle inequality. Specifically, we have for each $F_i$ (which involves at most $(d+1)^N$ monomials)
\[
\log|F_i(P)|\leq\log\|F_i\|+d\log\|P\|+N\log^+|d+1|\]

In the other direction, we invoke Lemma~\ref{lem:elim}. Specifically, there exist solutions to~\eqref{eq:modelim} with $\log\|A_{i, j}\|\leq c_1$ (where $A_{i, j}$ here is considered as a polynomial in $\mathbf{X}$ and the coefficients of $F$). The coefficient of each monomial in $\mathbf{X}$ in $A_{i, j}$ is a homogeneous form of degree $(N+1)d^N-1$ in the coefficients of $F$, with integer coefficients of logarithmic size at most $c_1$. It follows that, specializing with the coefficients of $F$, the coefficient of each monomial in $\mathbf{X}$ has size at most
\[((N+1)d^N-1)\log\|F\|+c_1+(d+1)^N\log^+|(N+1)d^N|.\]
Further specializing with $\mathbf{X}=P$ we then obtain
\begin{multline*}
	\log|A_{i, j}(F, P)|\leq (d-t)\log\|P\|+ ((N+1)d^N-1)\log\|F\|+c_1 \\+(d+1)^N\log^+|(N+1)d^N|+N\log^+|t-d|,
\end{multline*}
whence
\begin{multline*}\log|\Res(F)|+t\log |P_j|\leq \log\|F(P)\|+(t-d)\log\|P\|\\+\log\|A_{i, j}\|+
\log^+|N+1|+N\log^+|t-d+1|+\log^+|a^{-1}|.
\end{multline*}
Taking a maximum over $j$,
\begin{multline*}\log|\Res(F)|+d\log\|P\|\leq \log\|F(P)\|+((N+1)d^N-1)\log\|F\|\\+\log^+|N+1|+N\log^+|t-d+1|+c_1+(d+1)^N\log^+|(N+1)d^N|
\end{multline*}
in the case that $|\cdot|$ is archimedean, since then $\log^+|a^{-1}|= 0$.

In the case of a non-archimedean absolute value, we may use the existence of a solution to~\eqref{eq:modelim} with $a=1$ and $\|A_{i, j}\|\leq 1$ to obtain
\[\log|\Res(F)|+t\log |P_j|\leq \log\|F(P)\|+(t-d)\log\|P\|+((N+1)d^N-1)\log\|F\|.\]
Again taking the maximum over all $j$, we have
\[\log|\Res(F)|+d\log\|P\|\leq \log\|F(P)\|+((N+1)d^N-1)\log\|F\|\]
here.
\end{proof}

Now, as is standard, we define
\[G_F(P)=\lim_{k\to\infty}\frac{\log\|F^k(P)\|}{d^k}.\]
From Lemma~\label{lem:points est} we obtain some simple estimates on $G_F$, well known but for the explicit error terms.
\begin{lemma}\label{lem:ptescape}
For any point $P$, the limit defining $G_F(P)$ exists, and the function has the following properties.
\begin{enumerate}
\item \[\frac{-c_3-\lambda_{\operatorname{Hom}_d^N}(f)}{d-1}\leq G_F(P)-\log\|P\|-\frac{1}{d-1}\log\|F\|\leq \frac{c_4}{d-1}\]
with $c_i$ from the previous lemma.
\item $G_F(F(P))=dG_F(P)$
\item $G_F(\alpha P)=G_F(P)+\log|\alpha|$ for any nonzero scalar $\alpha$
\item $G_{\alpha F}(P)=G_F(P)+\frac{1}{d-1}\log|\alpha|$
\item For any $M\in \operatorname{GL}_{N+1}$, $G_{F^M}(P)=G_F(MP)$.
\end{enumerate}
\end{lemma}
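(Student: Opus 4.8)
The plan is to derive everything from Lemma~\ref{lem:points est}, applied not to $P$ itself but to each iterate $F^k(P)$. Write $a_k = d^{-k}\log\|F^k(P)\|$ for $P\in\AA^{N+1}_*$ (note $F^k(P)\neq 0$, as $F$ lifts a morphism), so that $a_0 = \log\|P\|$ and
\[a_{k+1} - a_k = \frac{1}{d^{k+1}}\bigl(\log\|F^{k+1}(P)\| - d\log\|F^k(P)\|\bigr) = \frac{1}{d^{k+1}}\bigl(\log\|F\| + \epsilon_k\bigr),\]
where $\epsilon_k := \log\|F(F^k(P))\| - d\log\|F^k(P)\| - \log\|F\|$. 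Applying Lemma~\ref{lem:points est} with $F^k(P)$ in place of $P$ bounds this defect uniformly in $k$ and $P$:
\[-c_3 - \lambda_{\operatorname{Hom}_d^N}(f) \leq \epsilon_k \leq c_4 \qquad\text{for all }k,\]
with $c_3 = c_4 = 0$ in the non-archimedean case. In particular $|\epsilon_k|$ is bounded independently of $k$, so $\sum_k(a_{k+1}-a_k)$ is dominated by a geometric series of ratio $1/d$ and converges absolutely; hence $G_F(P) = \lim_k a_k$ exists. Summing the telescoping series and using $\sum_{k\ge 0} d^{-(k+1)} = 1/(d-1)$,
\[G_F(P) - \log\|P\| - \frac{1}{d-1}\log\|F\| = \sum_{k=0}^{\infty}\frac{\epsilon_k}{d^{k+1}},\]
and estimating the right-hand side termwise with the bounds on $\epsilon_k$ gives (i).

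Parts (ii)--(v) are then formal consequences. For (ii), $G_F(F(P)) = \lim_k d^{-k}\log\|F^{k+1}(P)\| = d\lim_{j} d^{-j}\log\|F^{j}(P)\| = dG_F(P)$ after reindexing. For (iii) and (iv) one uses that each component of $F$ is homogeneous of degree $d$: induction gives $F^k(\alpha P) = \alpha^{d^k}F^k(P)$ and $(\alpha F)^k(P) = \alpha^{(d^k-1)/(d-1)}F^k(P)$, hence $\log\|F^k(\alpha P)\| = d^k\log|\alpha| + \log\|F^k(P)\|$ and $\log\|(\alpha F)^k(P)\| = \tfrac{d^k-1}{d-1}\log|\alpha| + \log\|F^k(P)\|$; dividing by $d^k$ and letting $k\to\infty$ yields the two identities. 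For (v), with $F^M = M^{-1}\circ F\circ M$, telescoping the conjugations gives $(F^M)^k = M^{-1}\circ F^k\circ M$, so $(F^M)^k(P) = M^{-1}\!\left(F^k(MP)\right)$. Since $\log\|M^{-1}Q\|$ and $\log\|Q\|$ differ by a quantity bounded in terms of $M$ alone (apply the triangle inequality to $M^{-1}Q$ and to $M(M^{-1}Q)$), we get $d^{-k}\log\|(F^M)^k(P)\| = d^{-k}\log\|F^k(MP)\| + O_M(d^{-k}) \to G_F(MP)$, which also exhibits the limit defining $G_{F^M}(P)$.

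I do not expect a genuine obstacle here: all the content sits in Lemma~\ref{lem:points est}, and the single feature that matters is that its bounds are uniform in $P$ --- precisely what lets the telescoping argument establish both the existence of $G_F$ and the explicit bounds in (i). The only places needing a little care are keeping the explicit constants $c_3$, $c_4$, $\lambda_{\operatorname{Hom}_d^N}(f)$ straight through the geometric summation in (i), and carrying out the degree-$d$ homogeneity bookkeeping in (iii)--(iv) so that the exponents $d^k$ and $(d^k-1)/(d-1)$ emerge correctly from the inductions; both are routine.
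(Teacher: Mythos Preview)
Your proof is correct and follows essentially the same approach as the paper: the telescoping-sum argument based on Lemma~\ref{lem:points est} for (i), and the same homogeneity and conjugation bookkeeping for (ii)--(v). If anything, your treatment is slightly more explicit in isolating the $\frac{1}{d-1}\log\|F\|$ term and in writing the exponent in (iv) as $(d^k-1)/(d-1)$.
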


\begin{proof}
The existence of the limit follows  a standard telescoping sum argument. Specifically, Lemma~\ref{lem:points est} bounds the series
\[G_F(P)-\log\|P\|=\sum_{k=0}^\infty d^{-k}\left(\frac{1}{d}\log\|F^{k+1}(P)\|-\log\|F^k(P)\|\right)\]
both above and below by a geometric series, yielding the first claim, and the second claim follows directly from the definition.
For the other properties:
\begin{enumerate} \setcounter{enumi}{2}
\item Note that the homogeneity of $F$ gives $F(\alpha P)=\alpha^d F(P)$, and so $F^k(\alpha P)=\alpha^{d^k} F(P)$. Since $\|\beta Q\|=\|Q\|\cdot |\beta|$, the property is immediate from the limit definition.
\item We have $F(\alpha P)=\alpha^d F(P)$, and so
\[(\alpha F)^k(P)=\alpha^{1+d^2+\cdots + d^{k-1}}F^k(P).\]
\item We have $(F^M)^k(P)=M^{-1}\circ F^k(MP)$. Since $\log\|M Q\|=\log\|Q\|+O_M(1)$, this gives
\[G_{F^M}(P)=\lim_{k\to\infty}\frac{\log\|M^{-1}\circ F^k (MP)\|}{d^k}=G_F(MP).\]
\end{enumerate}

\end{proof}

\subsection{Escape rates for homogeneous forms}

In this section we define an analogue of the function $G_F$ for homogeneous forms, instead of points. The main idea is that we determine the growth of $m(F^k_*\Phi)$, in terms of $k$, by ``testing'' $F^k_*\Phi$ with points, a more elementary approach than the standard of integrating with respect to some measure on $\mathbb{A}^{N+1}$.
\begin{lemma}
For any $\Phi$ and $P$ we have
\[\log |\Phi(P)|\leq m(\Phi)+\deg(\Phi)\log\| P\|+N\deg(\Phi)\log^+|2|
.\]
Furthermore, for every $\Phi$ there exists an $P$ such that
\[\log|\Phi(P)|\geq m(\Phi)+\deg(\Phi)\log\|P\|.\]
\end{lemma}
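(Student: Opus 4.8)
The statement has two halves. For the upper bound, I would fix a point $P=(p_0,\dots,p_N)$ and estimate $\log|\Phi(P)|$ by comparing it to the Mahler measure. If $\Phi=\sum_{\mathbf{i}}c_{\mathbf{i}}\mathbf{X}^{\mathbf{i}}$ has degree $D=\deg(\Phi)$, then $|\Phi(P)|\leq \|\Phi\|_1\cdot\|P\|^{D}$ by the triangle inequality (every monomial $\mathbf{X}^{\mathbf{i}}$ has total degree $D$, so $|\mathbf{X}^{\mathbf{i}}(P)|\le\|P\|^D$). Taking logs gives $\log|\Phi(P)|\le \log\|\Phi\|_1+D\log\|P\|$, and then Lemma~\ref{lem:mahler} (Mahler's inequality $\log\|\Phi\|_1\le m(\Phi)+ND\log 2$) finishes the archimedean case; in the non-archimedean case $\|\Phi\|_1$ should be replaced by the Gauss norm $\|\Phi\|=e^{m(\Phi)}$ and the ultrametric inequality removes the $\log^+|2|$ term entirely, consistent with writing the bound with $\log^+|2|$ (which is $0$ non-archimedeanly).

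For the lower bound, the natural approach is to choose $P$ on the relevant ``unit torus'': in the archimedean case, the defining property of Mahler measure is $m(\Phi)=\int_{(S^1)^{N+1}}\log|\Phi(\mathbf{x})|\,d\mu$, and since $\log|\Phi|$ is (the real part of) a pluriharmonic function away from its zero set, the mean-value/subharmonicity argument gives a point $\mathbf{x}\in(S^1)^{N+1}$ with $\log|\Phi(\mathbf{x})|\geq m(\Phi)$; there $\|\mathbf{x}\|=1$, so $\log\|P\|=0$ and the claimed inequality $\log|\Phi(P)|\ge m(\Phi)+D\log\|P\|$ holds. More precisely one should argue that the average of $\log|\Phi|$ over the torus is $m(\Phi)$, hence the supremum is at least the average; a mild subtlety is that $\log|\Phi|$ can equal $-\infty$ on a measure-zero set, but it is bounded above and its integral is finite, so a point achieving at least the average exists (e.g. by the intermediate value property of the continuous function obtained by a small perturbation, or simply because a function cannot lie everywhere strictly below its own mean). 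In the non-archimedean case, $m(\Phi)=\log\|\Phi\|$ is realized by the coefficient $c_{\mathbf{i}_0}$ of maximal absolute value; I would pick $P$ to be a point of a suitable residue field lift at which the reduction $\overline{\Phi}$ (scaled by $c_{\mathbf{i}_0}^{-1}$) does not vanish, so $|\Phi(P)|=\|\Phi\|$ with $\|P\|=1$. Since $\overline{\Phi}$ is a nonzero form over the (infinite, algebraically closed) residue field, such a $P$ exists, giving equality $\log|\Phi(P)|=m(\Phi)=m(\Phi)+D\cdot 0$.

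\emph{Main obstacle.} The only genuine subtlety is making the lower-bound argument in the archimedean case fully rigorous when $\Phi$ has zeros on the unit torus: one must be careful that ``$\sup\ge$ average'' still yields an honest point with a finite value, not a supremum approached only in a limit. This is handled by noting $\log|\Phi|$ is upper semicontinuous and bounded above on the compact torus, so it attains its supremum, and that supremum is $\ge$ the integral $=m(\Phi)$; alternatively, one integrates out one variable at a time using Jensen's formula, which both explains why the relevant average equals $m(\Phi)$ and produces the point coordinate-by-coordinate. Everything else is a one-line application of the triangle inequality and of Lemma~\ref{lem:mahler}.
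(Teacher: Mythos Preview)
Your proposal is correct and follows essentially the same approach as the paper. The upper bound via $|\Phi(P)|\le\|\Phi\|_1\|P\|^{\deg(\Phi)}$ together with Mahler's estimate is exactly what the paper does, and your archimedean lower bound (``the supremum over the torus is at least the mean $m(\Phi)$'') is the paper's contradiction argument recast contrapositively; your remark about upper semicontinuity actually handles the $-\infty$ issue more carefully than the paper's phrasing ``strictly negative continuous function''.

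The one genuine difference is in the non-archimedean lower bound. You argue directly: scale so some coefficient is a unit, reduce modulo the maximal ideal, and use that a nonzero form over the (algebraically closed, hence infinite) residue field has a nonvanishing point, which lifts to a $P$ with $\|P\|=1$ and $|\Phi(P)|=\|\Phi\|$. The paper instead performs a Kronecker-type substitution $t\mapsto(t,t^{\deg(\Phi)+1},\dots,t^{(\deg(\Phi)+1)^N})$ to reduce to a single variable (this preserves the Gauss norm because distinct monomials map to distinct powers of $t$), and then picks $t_0$ with $|t_0|=1$ avoiding the residues of the roots. Both arguments are short and standard; yours is arguably more direct, while the paper's has the minor advantage of not invoking algebraic closedness of the residue field.
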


\begin{proof}
We treat first the non-archimedean case. The first inequality follows from the ultrametric inequality. 
For the second inequality, note that it suffices to prove the claim for $\|P\|=1$. For a univariate polynomial, it suffices to choose $|P|=1$ such that $|P-\alpha|=1$ for any root $\alpha$ of $\Phi$ (after dehomogenizing). For the multivariate case, we note that we may reduce to the univariate case just by noting that the Gauss norm of $\Phi(t, t^{\deg(\Phi)+1}, ..., t^{(\deg(\Phi)+1)^N})$ is $\|\Phi\|$, since  $\Phi$ may be recovered from the preceding univariate polynomial by considering base-$(\deg(\Phi)+1)$ expansions of exponents of $t$. In particular, there is a point $P=(P_0, P_0^{\deg(\Phi)+1}, ..., P_0^{(\deg(\Phi)+1)^N})$ witnessing the claim.

We now treat the archimedean case. On the one hand, suppose toward a contradiction that \[\log|\Phi(P)|< m(\Phi)+\deg(\Phi)\log\|P\|\]
for all $P$. In particular, $\log|\Phi(P)|- m(\Phi)-\deg(\Phi)\log\|P\|$ is a strictly negative continuous function, and it follows that
\[0>\int_{(S^1)^{N+1}}(\log|\Phi(P)|- m(\Phi)-\deg(\Phi)\log\|P\|)d\mu(P)=m(\Phi)-m(\Phi)=0,\]
since $\log\|P\|=0$ on the  region of integration. This is impossible.

For the other direction, writing $\|\Phi\|_1$ for the sum of absolute values of coefficients of $\Phi$, the triangle inequality immediately gives
\[|\Phi(P)|\leq \|\Phi\|_1\cdot \|P\|^{\deg(\Phi)}.\]
The claim now follows from the estimate in Lemma~\ref{lem:mahler}, due to Mahler~\cite{mahler}, that
\[\log \|\Phi\|_1\leq N\deg(\Phi)\log 2 + m(\Phi).\]
\end{proof}


We now give explicit estimates on how the Mahler measure and Gauss norm behave under pushing forward and pulling back. The constants $c_3$ and $c_4$ are those from Lemma~\ref{lem:points est}, which we remind the reader depend only on $d$ and $N$. The constants introduced in this lemma will depend also on $F$, but in an explicit way.

\begin{lemma}\label{lem:pushpullestimates} 
For any homogeneous form $\Phi$, we have 
\begin{gather}
m(F^*\Phi)\leq m(\Phi)+\deg(\Phi)\log\|F\|+N\deg(\Phi)\log^+|2(d+1)| \label{eq:pullupper}\\
m(F^*\Phi)\geq m(\Phi)-\deg(\Phi)\frac{c_5}{d^{N}}\label{eq:pulllower}\\
m(F_* \Phi)\leq d^{N+1}m(\Phi)+\deg(\Phi)c_6,\label{eq:pushupper}\\
\intertext{and}
m(F_* \Phi)\geq d^{N+1}m(\Phi)-\deg(\Phi)c_5,\label{eq:pushlower}
\end{gather}
where
\begin{multline*}
c_5=d^N(d^{N+1}-1)\lambda_{\operatorname{Hom}_d^N}(f)+Nd^N(d^{N+2}+1)\log^+|2| +d^N(d^{N+1}-1)(c_3+c_4)\\+d^N\log\|F\|+Nd^N\log^+| d+1|
\end{multline*}
and
\[c_6=d^N\lambda_{\operatorname{Hom}_d^N}(f)-d^N\log\|F\|+d^Nc_3
+d^{N+1}N\log^+|2|.\]
\end{lemma}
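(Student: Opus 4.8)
The plan is to prove all four inequalities by the same device: testing the relevant form against points $P$ and using the preceding lemma (which bounds $\log|\Phi(P)|$ above and below in terms of $m(\Phi)$, $\deg(\Phi)$, and $\log\|P\|$), together with Lemma~\ref{lem:points est} to control $\log\|F(P)\|$ and $\log|\Res(F)|$, and Lemma~\ref{lem:ptescape} to control $G_F$. The pullback estimates \eqref{eq:pullupper} and \eqref{eq:pulllower} are the easy cases. For \eqref{eq:pullupper}: for any $P$ on the torus $(S^1)^{N+1}$ we have $\log|F^*\Phi(P)|=\log|\Phi(F(P))|\leq m(\Phi)+\deg(\Phi)\log\|F(P)\|+N\deg(\Phi)\log^+|2|$, and by Lemma~\ref{lem:points est} $\log\|F(P)\|\leq \log\|F\|+N\log^+|d+1|$ since $\log\|P\|=0$; integrating over the torus and using that $m(F^*\Phi)$ is the average of $\log|F^*\Phi|$ gives the bound after absorbing constants. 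For \eqref{eq:pulllower}, choose (by the preceding lemma) a point $Q$ with $\log|(F^*\Phi)(Q)|\geq m(F^*\Phi)+\deg(F^*\Phi)\log\|Q\|$; normalize so $\|Q\|=1$; then $m(F^*\Phi)\leq \log|\Phi(F(Q))|$, and applying the upper bound of the preceding lemma to $\Phi$ at the point $F(Q)$, followed by the \emph{lower} bound on $\log\|F(Q)\|$ from Lemma~\ref{lem:points est} (which introduces $-\lambda_{\operatorname{Hom}_d^N}(f)$, $-c_3$, and $\log\|F\|$), and recalling $\deg(F^*\Phi)=d\,\deg(\Phi)$, gives the claim with the stated $c_5/d^N$; one checks the resulting constant is dominated by the displayed $c_5$.

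For the pushforward estimates, the key identity is $F_*F^*\Psi=\Psi^{d^{N+1}}$, hence $m(F_*F^*\Psi)=d^{N+1}m(\Psi)$, combined with the multiplicativity $m(F_*(\Phi\Psi))=m(F_*\Phi)+m(F_*\Psi)$ and $F^*(\Phi\Psi)=(F^*\Phi)(F^*\Psi)$. The natural route for \eqref{eq:pushupper} and \eqref{eq:pushlower}: given $\Phi$, apply the pushforward to the already-understood pullback. Concretely, since $m(F^*F_*\Phi)=\ldots$ is not literally $\Phi^{\text{power}}$, I instead work in the other order: start from $\Phi$, write $m(F_*\Phi)$ by testing $F_*\Phi$ against a point. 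Using the product-over-fibres definition $F_*\Phi(\mathbf{Y})=\prod_{F(\mathbf{X})=\mathbf{Y}}\Phi(\mathbf{X})$ directly is awkward; the cleaner path is via the resultant formula $R(F,\Phi)=\Res(F)^{\deg(\Phi)}F_*\Phi$ from \eqref{eq:push}, together with the elementary estimates on Macaulay resultants of specialized forms. That is: bound $m(R(F,\Phi))$ above and below — $R(F,\Phi)$ is, up to the resultant construction, a polynomial in the coefficients of $F$ and $\Phi$ whose size can be estimated by the determinantal/Cramer bounds already used in Lemma~\ref{lem:elim} — and then divide out $\Res(F)^{\deg(\Phi)}$, which contributes $-\deg(\Phi)\log|\Res(F)|$; writing $\log|\Res(F)|=(N+1)d^N\log\|F\|-\lambda_{\operatorname{Hom}_d^N}(f)$ converts this into the $\lambda_{\operatorname{Hom}_d^N}(f)$ and $\log\|F\|$ terms appearing in $c_5$ and $c_6$. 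The factors of $d^N$ and $d^{N+1}$ track $\deg(F_*\Phi)=d^N\deg(\Phi)$ and the degree $d^{N+1}$ in the composition identity.

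The main obstacle I anticipate is bookkeeping rather than conceptual: producing the lower bound \eqref{eq:pushlower} with a constant that is genuinely \emph{linear} in $\deg(\Phi)$ and uniform in $F$. The subtlety is that lower bounds on $m(F_*\Phi)$ require exhibiting a point $P$ where $|F_*\Phi(P)|$ is large, and the natural candidate point comes from a preimage under $F$, where one must control $\log\|F^{-1}(\cdot)\|$ — this is exactly where $\lambda_{\operatorname{Hom}_d^N}(f)$ (bad reduction of $F$) enters, through the lower bound in Lemma~\ref{lem:points est}. Iterating this once per "level" of the fibre product is what produces the factor $d^{N+1}-1$ multiplying $\lambda_{\operatorname{Hom}_d^N}(f)$ and $c_3+c_4$ in $c_5$. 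I would set this up as a telescoping estimate mirroring the proof of Lemma~\ref{lem:ptescape}(i): compare $\frac{1}{d}\log|F_*\Phi(F(P))|$ type quantities across one application of $F$, using \eqref{eq:pullupper}/\eqref{eq:pulllower} applied to $F_*\Phi$ in place of $\Phi$ together with $m(F^*F_*\Phi)$; the only real work is verifying that the accumulated constants collapse to the stated closed forms $c_5$ and $c_6$, and that the archimedean $\log^+|2|$ and $\log^+|d+1|$ contributions are correctly tallied (they vanish non-archimedean, consistent with Lemma~\ref{lem:points est}).
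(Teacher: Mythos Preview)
Your integration argument for \eqref{eq:pullupper} is fine and recovers the same constant as the paper's $L_1$-norm computation. The rest of the proposal has genuine gaps.

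For \eqref{eq:pulllower}, your chain of inequalities runs the wrong way. Having chosen $Q$ with $\log|(F^*\Phi)(Q)|\geq m(F^*\Phi)$ (normalizing $\|Q\|=1$), you obtain $m(F^*\Phi)\leq \log|\Phi(F(Q))|\leq m(\Phi)+\deg(\Phi)\log\|F(Q)\|+\cdots$, which is an \emph{upper} bound on $m(F^*\Phi)$ regardless of what you substitute for $\log\|F(Q)\|$; no lower bound on $\log\|F(Q)\|$ rescues this. The paper does not prove \eqref{eq:pulllower} directly at all: it is derived last, by applying \eqref{eq:pushlower} to $F^*\Phi$ and using $F_*F^*\Phi=\Phi^{d^{N+1}}$.

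For the pushforward bounds, the resultant route you favour does not yield error terms linear in $\deg(\Phi)$. The Macaulay matrix for $R(F,\Phi)$ is a resultant of $N+1$ forms of degree $d$ and one of degree $\deg(\Phi)$ in $N+2$ variables; its size is $\binom{t+N+1}{N+1}$ with $t=(N+1)(d-1)+\deg(\Phi)$, hence of order $(\deg\Phi)^{N+1}$. Determinantal or Hadamard-type bounds then produce error terms of order $(\deg\Phi)^{N+1}\log\|F\|$, precisely the super-linear dependence on $\deg(\Phi)$ that the paper is designed to avoid (see the introduction's discussion of Hutz's estimates). The paper instead uses the product-over-fibres formula you called awkward. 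For \eqref{eq:pushupper}: choose a witness $Q$ with $\log|F_*\Phi(Q)|\geq m(F_*\Phi)+\deg(F_*\Phi)\log\|Q\|$, expand $\log|F_*\Phi(Q)|=\sum_{F(P)=Q}\log|\Phi(P)|$, bound each summand by the pointwise upper bound on $|\Phi(P)|$, and control $\log\|P\|$ via the \emph{lower} bound in Lemma~\ref{lem:points est}. For \eqref{eq:pushlower}, the key device your proposal does not identify is the factorization $F^*F_*\Phi=\Phi\cdot\Psi$ with $\deg\Psi=(d^{N+1}-1)\deg\Phi$: since $F^*F_*\Phi$ is constant on fibres of $F$, a witness $Q$ for $m(F^*F_*\Phi)$ gives, via the upper bound on $\log|\Psi(Q')|$, a lower bound on $\log|\Phi(Q')|$ for \emph{every} $Q'$ with $F(Q')=F(Q)$; summing over the $d^{N+1}$ points of the fibre and combining with \eqref{eq:pullupper} applied to $F_*\Phi$ yields \eqref{eq:pushlower}. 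The factor $d^{N+1}-1$ in $c_5$ comes from $\deg\Psi$, not from any telescoping iteration.
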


\begin{proof}
In the case of a non-archimedean absolute value, note that the ultrametric inequality gives
\[m(F^*\Phi)=m(\Phi\circ F)\leq m(\Phi)+\deg(\Phi)\log\|F\|,\]
just by examining every coefficient in the composition.

For the archimedean case, we have, with $\|\cdot\|_1$ the sum of absolute values of coefficients (the $L_1$ norm)
\[\|f+g\|_1\leq \|f\|_1+\|g\|_1\qquad\text{and}\qquad \|fg\|_1\leq \|f\|_1\cdot\|g\|_1.\]
So it follows at once that for any monomial $c_\alpha \mathbf{x}^\alpha$ of degree $\deg(\Phi)$, we have
\[\|c_\alpha F^{\alpha}\|_1\leq |c_\alpha|\max \|F_i\|_1^{\deg(\Phi)}\leq |c_\alpha|\|F\|^{\deg(\Phi)}(d+1)^{N\deg(\Phi)}.\]
Summing over all monomials in $\Phi$ we obtain
\[\|\Phi\circ F\|_1\leq \|\Phi\|_1\|F\|^{\deg(\Phi)}(d+1)^{N\deg(\Phi)}\]
and finally we use Mahler's estimates to obtain
\begin{align*}
m(F^*\Phi)&= m(\Phi\circ F)\\
&\leq \log \|\Phi\circ F\|_1\\
&\leq \log \|\Phi\|_1+\deg(\Phi)\log\|F\|+\deg(\Phi)N\log^+|d+1|\\
&\leq m(\Phi)+\deg(\Phi)\log\|F\|+N\deg(\Phi)\log^+|2(d+1)| .
\end{align*}
This establishes~\eqref{eq:pullupper}.

Turning our attention to~\eqref{eq:pushupper}, choose $Q\in\AA^{N+1}_*(\CC_v)$ so that \[\log|F_*\Phi(Q)|\geq m(F_*\Phi)+\deg(F_*\Phi)\log\|Q\|.\]
 By Lemma~\ref{lem:points est}, we have
 \begin{align*}
m(F_*\Phi)&\leq \log |F_*\Phi(Q)|-\deg(F_*\Phi)\log\|Q\|\\
&= \left(\sum_{F(P)=Q}\log|\Phi(P)|-\deg(\Phi)\frac{\log\|Q\|}{d}\right) \\
&\leq \Bigg(\sum_{F(P)=Q}\log|\Phi(P)|- \deg(\Phi)\Big(\log\|P\| -\frac{1}{d}\lambda_{\operatorname{Hom}_d^N}(f)\\&\quad+\frac{1}{d}\log\|F\|-\frac{c_3}{d} \Big)\Bigg)\\
&=\sum_{F(P)=Q}\left(\log|\Phi(P)|-\deg(\Phi)\log\|P\|\right)\\&\quad+\deg(\Phi)\left(d^N\lambda_{\operatorname{Hom}_d^N}(f)-d^N\log\|F\|+d^Nc_3\right)\\
&\leq \sum_{F(P)=Q}\left(m(\Phi)+N\deg(\Phi)\log^+|2|\right)\\
&\quad +\deg(\Phi)\left(d^N\lambda_{\operatorname{Hom}_d^N}(f)-d^N\log\|F\|+d^Nc_3\right)\\
&=d^{N+1}m(\Phi)+d^{N+1}N\deg(\Phi)\log^+|2|\\
&\quad +\deg(\Phi)\left(d^N\lambda_{\operatorname{Hom}_d^N}(f)-d^N\log\|F\|+d^Nc_3\right),
\end{align*}
confirming~\eqref{eq:pushupper}.

In order to obtain a lower bound on $m(F_* \Phi)$, we will obtain an lower bound on $m(F^*F_* \Phi)$, and then use the upper bound established above for  a pull-back.
We fix a $Q$ with
\[m(F^*F_*\Phi) \leq \log|F^*F_* \Phi(Q)|-\deg(F^*F_*\Phi)\log\|Q\|.\]

Note that if $F(P')=F(P)$, then by the pushforward definition we immediately have that $F^*F_*\Phi(P')=F^*F_*\Phi(P)$. Also note that $F^*F_*\Phi=\Phi \Psi$, for some homogeneous form $\Psi$ (of degree $(d^{N+1}-1)\deg(\Phi)$). Since \[\log|\Psi(P')|\leq m(\Psi)+\deg(\Psi)\log \|P'\|+N\deg(\Psi)\log^+|2|,\] 
we have for $F(Q')=F(Q)$
\begin{align*}
\log|\Phi(Q')|&=\log|F^*F_*\Phi(Q')|-\log|\Psi(Q')|\\
&\geq  \log|F^*F_*\Phi(Q)| - m(\Psi)-\deg(\Psi)\log \|Q'\|-N\deg(\Psi)\log^+|2|\\
&\geq m(F^*F_*\Phi)+\deg(F^*F_*\Phi)\log\|Q\|-m(\Psi)-\deg(\Psi)\log \|Q'\|\\&\quad-N\deg(\Psi)\log^+|2|\\
&=m(\Phi)+\deg(\Phi)\log\|Q\|\\&\quad+\deg(\Psi)\left(\log\|Q\|-\log\|Q'\|\right)-N\deg(\Psi)\log^+|2|\\
&\geq  m(\Phi)+\deg(\Phi)\log\|Q\|\\
&\quad +\deg(\Psi)\Big(\frac{1}{d}\log\|F(Q)\|-\frac{1}{d}\log\|F(Q')\|-\frac{1}{d}\lambda_{\operatorname{Hom}_d^N}(f)-\frac{c_3}{d}-\frac{c_4}{d}\Big)\\&\quad-N\deg(\Psi)\log^+|2|\\
&= m(\Phi)+\deg(\Phi)\log\|Q\|\\&\quad-\deg(\Phi)\frac{d^{N+1}-1}{d}\Big(\lambda_{\operatorname{Hom}_d^N}(f)+c_3+c_4+Nd\log^+|2|\Big).
\end{align*}
Writing
\[C(f, d, N)=\frac{d^{N+1}-1}{d}\left(\lambda_{\operatorname{Hom}_d^N}(f)+c_3+c_4+Nd\log^+|2|\right),\]
we then have 
\begin{align*}
m(F^*F_*\Phi)&\geq \log|F^*F_*\Phi(Q)|-\deg(F^*F_*\Phi)\log\|Q\|-N\deg(F^*F_*\Phi)\log^+|2|\\
&= \sum_{F(Q')=F(Q)}\left(\log|\Phi(Q')|-\deg(\Phi)\log\|Q\|\right)-Nd^{N+1}\deg(\Phi)\log^+|2|\\
&\geq \sum_{F(P')=F(P)}\left(m(\Phi)-C(f, d, N)\deg(\Phi)\right)-Nd^{N+1}\deg(\Phi)\log^+|2|\\
&= d^{N+1}m(\Phi)-\deg(\Phi)(d^{N+1}C(f, d, N)+d^{N+1}N\log^+|2|)\\
&= d^{N+1}m(\Phi)\\
&\quad -\deg(\Phi)\Big(d^N(d^{N+1}-1)\lambda_{\operatorname{Hom}_d^N}(f)+Nd^{2N+2}\log^+|2|\\&\quad +d^N(d^{N+1}-1)(c_3+c_4)\Big)
\end{align*}
On the other hand, from~\eqref{eq:pullupper} we also have
\begin{align*}
m(F^*F_*\Phi)&\leq m(F_*\Phi)+\deg(F_*\Phi)\log\|F\|+N\deg(F_*\Phi)\log^+| 2(d+1)| \\
&= m(F_*\Phi)+d^N\deg(\Phi)\log\|F\|+Nd^N\deg(\Phi)\log^+| 2(d+1)| .\end{align*}
Combining the last two estimates gives~\eqref{eq:pushlower}.  


Finally, applying~\eqref{eq:pushlower} to $F_*\Phi$, we obtain
\[d^{N+1}m(\Phi)=m(F_*F^*\Phi)\geq d^{N+1}m(F^*\Phi)-c_5\deg(F^*\Phi),\]
providing~\eqref{eq:pulllower}.
\end{proof}

Now, for a homogeneous form $\Phi$, we define
\[G_F(\Phi)=\lim_{k\to\infty}\frac{m(F_*^k\Phi)}{d^{k(N+1)}}.\]

\begin{lemma} \label{lem:divescape} 
For any nonzero homogeneous form $\Phi$, the limit defining $G_F(\Phi)$ exists, and the function has the following properties.
\begin{enumerate}
\item \[G_F(\Phi)\leq m(\Phi)+\frac{c_6}{d^N(d-1)}\deg(\Phi)\]
and
\[G_F(\Phi)\geq m(\Phi) -\frac{c_5}{d^N(d-1)}\deg(\Phi),\]
\item $G_F(\Phi\Psi)=G_F(\Phi)+G_F(\Psi)$ for any nonzero homogeneous form $\Psi$
\item $G_F(F_*\Phi)=d^{N+1}G_F(\Phi)$
\item $G_F(F^*\Phi)=G_F(\Phi)$
\item $G_F(\alpha \Phi)=G_F(\Phi)+\log|\alpha|$ for any nonzero scalar $\alpha$
\item $G_{\alpha F}(\Phi)=G_F(\Phi)-\frac{\deg(\Phi)}{d-1}\log|\alpha|$
\item For any $M\in \operatorname{GL}_{N+1}$, $G_{F^M}(\Phi)=G_F(M_*\Phi)$.
\end{enumerate}
\end{lemma}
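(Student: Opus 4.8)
The plan is to follow the template of Lemma~\ref{lem:ptescape}, with the pushforward estimates \eqref{eq:pushupper} and \eqref{eq:pushlower} of Lemma~\ref{lem:pushpullestimates} playing the role that the point estimates of Lemma~\ref{lem:points est} played there. For existence of the limit and property (i), I would apply \eqref{eq:pushupper} and \eqref{eq:pushlower} with $\Phi$ replaced by $F_*^j\Phi$, using $\deg(F_*^j\Phi)=d^{Nj}\deg(\Phi)$, to get
\[
-d^{Nj}\deg(\Phi)c_5 \le m(F_*^{j+1}\Phi) - d^{N+1}m(F_*^j\Phi) \le d^{Nj}\deg(\Phi)c_6 .
\]
Dividing by $d^{(j+1)(N+1)}$ shows that the $j$-th term of the telescoping sum
\[
\frac{m(F_*^k\Phi)}{d^{k(N+1)}} = m(\Phi) + \sum_{j=0}^{k-1}\left(\frac{m(F_*^{j+1}\Phi)}{d^{(j+1)(N+1)}} - \frac{m(F_*^j\Phi)}{d^{j(N+1)}}\right)
\]
has absolute value at most $\max(c_5,c_6)\deg(\Phi)\,d^{-j-N-1}$. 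Hence the partial sums are Cauchy, the limit defining $G_F(\Phi)$ exists, and $\sum_{j\ge 0} d^{-j-N-1} = \frac{1}{d^N(d-1)}$ yields both inequalities in (i).

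Properties (ii)--(iv) are formal consequences of identities already recorded in Section~\ref{sec:local}. Iterating $F_*(\Phi\Psi)=(F_*\Phi)(F_*\Psi)$ gives $F_*^k(\Phi\Psi)=(F_*^k\Phi)(F_*^k\Psi)$, so additivity of $m$, division by $d^{k(N+1)}$, and passage to the limit give (ii). For (iii), $\frac{m(F_*^k(F_*\Phi))}{d^{k(N+1)}} = d^{N+1}\cdot\frac{m(F_*^{k+1}\Phi)}{d^{(k+1)(N+1)}}\to d^{N+1}G_F(\Phi)$. Then (iv) follows by combining the identity $F_*F^*\Phi=\Phi^{d^{N+1}}$ with (ii) and (iii): $d^{N+1}G_F(F^*\Phi)=G_F(F_*F^*\Phi)=G_F(\Phi^{d^{N+1}})=d^{N+1}G_F(\Phi)$.

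For the remaining three properties the idea is to track how a scalar, or the conjugating matrix, propagates through $F_*^k$. Since $F(\mathbf{X})=\mathbf{Y}$ has $d^{N+1}$ solutions counted with multiplicity (as witnessed by $F_*F^*\Phi=\Phi^{d^{N+1}}$), we have $F_*(\alpha\Phi)=\alpha^{d^{N+1}}F_*\Phi$ for a scalar $\alpha$, and iterating gives $F_*^k(\alpha\Phi)=\alpha^{d^{k(N+1)}}F_*^k\Phi$; taking $m$, dividing by $d^{k(N+1)}$, and letting $k\to\infty$ gives (v). For (vi), the $\alpha F$-preimages of $\mathbf{Y}$ are exactly the $F$-preimages of $\alpha^{-1}\mathbf{Y}$, so $(\alpha F)_*\Phi=\alpha^{-d^N\deg(\Phi)}F_*\Phi$, and an induction using $(\alpha F)_*(\beta\Psi)=\beta^{d^{N+1}}(\alpha F)_*\Psi$ yields $(\alpha F)_*^k\Phi=\alpha^{-d^{Nk}\frac{d^k-1}{d-1}\deg(\Phi)}F_*^k\Phi$; dividing $m$ by $d^{k(N+1)}$ produces the coefficient $\frac{d^k-1}{d^k(d-1)}\to\frac{1}{d-1}$, which is (vi). For (vii), functoriality of the pushforward gives $(F^M)_*=(M^{-1})_*\circ F_*\circ M_*$, and $M_*\circ(M^{-1})_*=\operatorname{id}$, so $(F^M)_*^k\Phi=(M^{-1})_*\bigl(F_*^k(M_*\Phi)\bigr)$; since $(M^{-1})_*\Psi=\Psi\circ M$ has the same degree as $\Psi$, the point-testing lemma preceding Lemma~\ref{lem:pushpullestimates} gives $\lvert m(\Psi\circ M)-m(\Psi)\rvert\le c(M)\deg(\Psi)$ for a constant $c(M)$ depending only on $M$, whence $\lvert m((F^M)_*^k\Phi)-m(F_*^k(M_*\Phi))\rvert\le c(M)\,d^{Nk}\deg(\Phi)=o(d^{k(N+1)})$, so dividing by $d^{k(N+1)}$ gives $G_{F^M}(\Phi)=G_F(M_*\Phi)$.

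The argument is largely bookkeeping, and I do not expect a serious obstacle. The points that require a little care are: verifying that the geometric series in (i) sums to exactly $\frac{1}{d^N(d-1)}$; pinning down the exponent $d^{Nk}\frac{d^k-1}{d-1}$ in the induction for (vi); and, in (vii), recognizing that because $m$ is \emph{not} invariant under linear changes of coordinates one must absorb an error of size $O_M(\deg\Phi)$ coming from the comparison of $m(\Psi\circ M)$ with $m(\Psi)$ — an error that is harmless precisely because of the normalization by $d^{k(N+1)}$. Of these, (vii) is the only step with any genuine content beyond the formal manipulations.
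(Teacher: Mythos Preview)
Your proof is correct and follows essentially the same approach as the paper, which also establishes existence and (i) via the telescoping sum bounded by \eqref{eq:pushupper}--\eqref{eq:pushlower}, derives (ii)--(vi) from the same formal identities you record, and handles (vii) by the same $O_M(\deg\Psi)$ comparison of $m$ under the linear change of variable $M$. The only cosmetic difference is that for this last bound the paper invokes Lemma~\ref{lem:pushpullestimates} applied to the linear map $M$, whereas you invoke the point-testing lemma together with the elementary estimate $\log\|MP\|=\log\|P\|+O_M(1)$; both routes yield the required estimate.
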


\begin{proof}
From Lemma~\ref{lem:pushpullestimates} that
\[\left|m(\Phi)-\frac{1}{d^{N+1}}m(F_*\Phi)\right|\leq c_7,\]
for some constant $c_7$ depending on $N$, $d$, and $F$. By the standard telescoping sum argument, the limit exists, and the bounds in~(i) hold. For~(ii) we apply $m(\Phi\Psi)=m(\Phi)+m(\Psi)$ and $F_*(\Phi\Psi)=(F_*\Phi)(F_*\Psi)$, and (iii) follows immediately from the definition. From (iii) and  $F_*F^*\Phi=\Phi^{d^{N+1}}$ we derive (iv), and (v) follows from the observation that $F_*(\alpha \Phi)=\alpha^{d^{N+1}}F_*\Phi$. Then (vi) follows from the same and $(\alpha F)_*\Phi=\alpha^{-d^N\deg(\Phi)}F_*\Phi$.

Now note that  Lemma~\ref{lem:pushpullestimates} (which applies without modification to linear maps)
  gives $m(M^{-1}_* \Psi) =m(\Psi)+O_M(\deg(\Psi))$. It follows that 
\[m((F^M)^k  \Phi)=m(M^{-1}_* F^k_* M_*\Phi) = m(F_*^k M_*\Phi) +O_M(d^{kN}\deg(\Phi)),\]
whence $G_{F^M}(\Phi)=G_F(M_*\Phi)$.
\end{proof}

\begin{remark}
Although we used the Mahler measure to define the $G_F(\Phi)$ in the case of an archimedean absolute value, note that we could equally have used the more naive $\log\|\Phi\|$, as in~\cite{mincritA, mincritP}. Specifically, by Lemma~\ref{lem:mahler} we have
\[\left|m(\Phi)-\log\|\Phi\|\right|\leq N\deg(\Phi)\log 2,\]
whence
\begin{multline*}
d^{-k(N+1)}\left|m(F_*^k\Phi)-\log\|F_*^k\Phi\|\right| \\ \leq d^{-k(N+1)}N\deg(F_*^k\Phi)\log 2=d^{-k}N\deg(\Phi)\log 2,	
\end{multline*}
from which we have
\[G_F(\Phi)=\lim_{k\to\infty}\frac{\log\|F_*^k\Phi\|}{d^{k(N+1)}}.\]	
\end{remark}

We can use the constructions in the previous subsections to construct a function $g_f(D, P)$ generalizing, from $\PP^1$ to $\PP^N$, the dynamical Arakelov-Greens functions of Baker and Rumely~\cite{br}. Although these are not central to the results in Section~\ref{sec:global}, we include the construction because they offer lift-independent local canonical heights for points and divisors, with explicit error terms.

 As above, let $K$ be an algebraically closed field, complete with respect to some absolute value $|\cdot|$, and let $f:\PP^N\to\PP^N$ be a morphism of degree $d\geq 2$, with a lift $F:\AA^{N+1}\to\AA^{N+1}$. Given a point $P\in\PP^N$ and an effective divisor $D$ on $\PP^N$, we will choose a lift $Q\in\AA^{N+1}$ of $P$ and a homogeneous form $\Phi$ defining $D$, and set
\begin{equation}\label{eq:gdef}
g_f(D, P)=-\log|\Phi(Q)|+\deg(P)G_F(\Phi)+\deg(\Phi)G_F(Q).	
\end{equation}
Once we show that this is well-defined, we will extend linearly to a function \[g_f:Z^1(\PP^N)\times Z_1(\PP^N)\to \RR\cup\{\infty\},\]
where $Z^1(\PP^N)$ is the group of divisors on $\PP^N$, and $Z_1(\PP^N)$ is the group of (formal linear combinations of) points.

 \begin{proposition}
The function $g_f$ is well-defined (independent of the choice of lifts), is linear in each coordinate, and for any point $P\not\in \mathrm{Supp}(D)$ we have
\begin{gather*}
g_f(D, f_*P)=g_f(f^*D, P),\\
g_f(f_*D, P)=g_f(D, f^*P),\\
\intertext{and}
g_f(f^*D, f^*P)=d^Ng_f(D, P).
\end{gather*}
Also, $g_f$ is  coordinate covariant in the sense that, for any $M\in \operatorname{PGL}_{N+1}$, we have
\[g_{f^M}(D, P)=g_f(M_*D, M_*P),\]
and finally
\begin{equation}\label{eq:localerror} g_f(D, P)=\langle D, P\rangle +O(\lambda_{\operatorname{Hom}_d^N}(f)),\end{equation}
where
\[\langle D, P\rangle = -\log|\Phi(Q)|+m(\Phi)+\deg(\Phi)\log\|Q\|.\]
\end{proposition}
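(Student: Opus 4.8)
The plan is to verify each asserted property in turn, leaning heavily on the machinery already built for $G_F$ on points (Lemma~\ref{lem:ptescape}) and on forms (Lemma~\ref{lem:divescape}). First I would establish well-definedness: replacing $Q$ by $\alpha Q$ changes $-\log|\Phi(Q)|$ by $-\deg(\Phi)\log|\alpha|$ and changes $\deg(\Phi)G_F(Q)$ by $\deg(\Phi)\log|\alpha|$ by Lemma~\ref{lem:ptescape}(iii), so the sum is unchanged; replacing $\Phi$ by $\beta\Phi$ changes $-\log|\Phi(Q)|$ by $-\log|\beta|$ and changes $\deg(P)G_F(\Phi)$ by $\deg(P)\log|\beta| = \log|\beta|$ (since $\deg(P)=1$ for a single point, or $\deg(P)$ in general when extending linearly) by Lemma~\ref{lem:divescape}(v), again cancelling. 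Bilinearity then follows from $m(\Phi\Psi)=m(\Phi)+m(\Psi)$ together with Lemma~\ref{lem:divescape}(ii) in the divisor variable, and from additivity of $\log|\Phi(Q_1Q_2)|$-type terms and Lemma~\ref{lem:ptescape} in the point variable; the $\infty$ value occurs exactly when $P\in\mathrm{Supp}(D)$, so for $P\notin\mathrm{Supp}(D)$ everything is finite.

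Next come the functoriality identities. For these I would fix a lift $F$ of $f$, a lift $Q$ of $P$, and a form $\Phi$ defining $D$, and simply substitute the definitions. For $g_f(f_*D,P)=g_f(D,f^*P)$: the left side uses the form $F_*\Phi$ evaluated at $Q$ and the escape rates $G_F(F_*\Phi)=d^{N+1}G_F(\Phi)$ (Lemma~\ref{lem:divescape}(iii)) and $G_F(Q)$, with $\deg(F_*\Phi)=d^N\deg(\Phi)$; the right side uses $\Phi$ evaluated at $F(Q)$ (a lift of $f^*P$, which as a cycle is $d$ copies... here I must be careful about whether $f^*P$ means the scheme-theoretic fibre as a $0$-cycle of degree $d^N$) together with $G_F(\Phi)$ and $G_F(F(Q))=dG_F(Q)$ (Lemma~\ref{lem:ptescape}(ii)). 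Matching up $-\log|F_*\Phi(Q)| = -\sum_{F(P')=Q}\log|\Phi(P')|$ against $-\log|\Phi(F(Q))|$ using the pushforward/pullback definitions, and bookkeeping the degree factors, yields the identity; the other two identities ($g_f(D,f_*P)=g_f(f^*D,P)$ and $g_f(f^*D,f^*P)=d^Ng_f(D,P)$) are of the same character, using $G_F(F^*\Phi)=G_F(\Phi)$ (Lemma~\ref{lem:divescape}(iv)), $\deg(F^*\Phi)=d\deg(\Phi)$, and $F_*F^*\Phi=\Phi^{d^{N+1}}$, $f_*f^*D=d^ND$ from Section~\ref{sec:local}. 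Coordinate covariance $g_{f^M}(D,P)=g_f(M_*D,M_*P)$ follows the same template, invoking Lemma~\ref{lem:ptescape}(v) ($G_{F^M}(Q)=G_F(MQ)$), Lemma~\ref{lem:divescape}(vii) ($G_{F^M}(\Phi)=G_F(M_*\Phi)$), and the fact that $M_*\Phi$ defines $M_*D$ with $(M_*\Phi)(MQ)$ agreeing with $\Phi(Q)$ up to the constant factor $\det$-type term that cancels exactly as in the well-definedness check (using $M\in\operatorname{PGL}_{N+1}$).

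Finally, for the error estimate~\eqref{eq:localerror}, I would substitute the bounds $|G_F(\Phi)-m(\Phi)|\leq \frac{\max(c_5,c_6)}{d^N(d-1)}\deg(\Phi)$ from Lemma~\ref{lem:divescape}(i) and $|G_F(Q)-\log\|Q\|-\frac{1}{d-1}\log\|F\||\leq \frac{\max(c_3+\lambda_{\operatorname{Hom}_d^N}(f),\,c_4)}{d-1}$ from Lemma~\ref{lem:ptescape}(i) into~\eqref{eq:gdef}, to get $g_f(D,P) = -\log|\Phi(Q)| + \deg(P)m(\Phi) + \deg(\Phi)\log\|Q\| + \deg(\Phi)\deg(P)\cdot\frac{1}{d-1}\log\|F\| + (\text{error})$, where the error is $O(\deg(\Phi)\deg(P)\,\lambda_{\operatorname{Hom}_d^N}(f))$ after absorbing the $c_3,c_4,c_5,c_6$ — each of which, by its formula in Lemmas~\ref{lem:points est} and~\ref{lem:pushpullestimates}, is itself $O(\lambda_{\operatorname{Hom}_d^N}(f))$ plus a constant depending only on $d,N$, and $\log\|F\|$ is likewise $O(\lambda_{\operatorname{Hom}_d^N}(f))$ since $\lambda_{\operatorname{Hom}_d^N}(f) = -\log|\Res(F)| + (N+1)d^N\log\|F\|$ controls $\log\|F\|$ from above when suitably normalized. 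Comparing with the stated $\langle D,P\rangle = -\log|\Phi(Q)| + m(\Phi) + \deg(\Phi)\log\|Q\|$ (taking $\deg(P)=1$), the leftover terms are $O(\lambda_{\operatorname{Hom}_d^N}(f))$ as claimed.

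I expect the main obstacle to be purely bookkeeping rather than conceptual: getting the degree factors and the normalization of $f^*P$ versus $f_*P$ exactly right in the functoriality identities, and making sure that the $\log\|F\|$ term that appears in the $G_F(Q)$ estimate is legitimately absorbed into $O(\lambda_{\operatorname{Hom}_d^N}(f))$ — this requires noting that $\log\|F\|$ and $-\log|\Res(F)|$ cannot both be made small simultaneously for a genuine lift, so $\lambda_{\operatorname{Hom}_d^N}(f)$ does dominate $\log\|F\|$ up to an additive constant, or else absorbing the $\log\|F\|$ term into an implied constant depending on how one bounds the Néron function. The cleanest route is probably to note that all of $c_3, c_4, c_5, c_6$ are affine-linear in $\lambda_{\operatorname{Hom}_d^N}(f)$ and $\log\|F\|$ with coefficients depending only on $d, N$, and then to observe that for the purposes of the $O(\cdot)$ statement one may take the implied constant to depend on $d$ and $N$ and to treat $\log\|F\|$ and $\lambda_{\operatorname{Hom}_d^N}(f)$ as comparable after normalizing $\|F\|$ or $\|\Res(F)\|$; since the proposition only asserts an $O(\lambda_{\operatorname{Hom}_d^N}(f))$ bound with no claim of uniformity in $\log\|F\|$, this is harmless.
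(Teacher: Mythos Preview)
Your approach is essentially the paper's, with one omission worth flagging: you check that $g_f$ is independent of the lifts of $P$ and $D$, but not of the lift $F$ of $f$. The paper does this explicitly, using Lemma~\ref{lem:divescape}(vi) and Lemma~\ref{lem:ptescape}(iv): replacing $F$ by $\alpha F$ shifts $G_F(\Phi)$ by $-\frac{\deg(\Phi)}{d-1}\log|\alpha|$ and shifts $\deg(\Phi)G_F(Q)$ by $+\frac{\deg(\Phi)}{d-1}\log|\alpha|$, so the sum in~\eqref{eq:gdef} is unchanged. This is not merely cosmetic. Once $g_f$ is known to be lift-independent, the clean way to handle the error estimate~\eqref{eq:localerror} is to normalize $\|F\|=1$; then the stray $\log\|F\|$ terms in $c_5$, $c_6$, and in the bound of Lemma~\ref{lem:ptescape}(i) all vanish, and every remaining constant is affine in $\lambda_{\operatorname{Hom}_d^N}(f)$ with coefficients depending only on $d$ and $N$. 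Your closing paragraph's uncertainty about whether $\log\|F\|$ is genuinely $O(\lambda_{\operatorname{Hom}_d^N}(f))$ then evaporates: it is not that $\log\|F\|$ is controlled by $\lambda_{\operatorname{Hom}_d^N}(f)$ for an arbitrary lift, but that you are free to choose the lift so that $\log\|F\|=0$.

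On the functoriality identities and coordinate covariance your plan matches the paper's computation line for line (the paper derives $g_f(f^*D,f^*P)=d^Ng_f(D,P)$ from the first identity and $f_*f^*P=d^NP$ rather than directly, which is slightly slicker than doing all three from scratch). The paper's own proof, incidentally, does not spell out the error estimate~\eqref{eq:localerror} at all, so on that point you are supplying more than the original.
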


\begin{proof}
First, note that
\[-\log|\alpha \Phi(Q)|+G_F(\alpha \Phi)=-\log|\Phi(Q)|-\log|\alpha|+G_F(\Phi)+\log|\alpha|\]
for any scalar $\alpha\neq 0$. Since the other terms in $g_f$ are independent of $\Phi$, this shows that two lifts of $D$ will give the same value, and one checks similarly independence of the choice of lift of $P$. 
It remains to check that $g_f$ is independent of our choice of lift for $F$. To see this, note that
%
\begin{eqnarray*}
G_{\alpha F}(\Phi)&=&G_F(\Phi)-\frac{\deg(\Phi)}{d-1}\log|\alpha|\\
\deg(\Phi)G_{\alpha F}(Q)&=&\deg(\Phi)G_F(Q)+\frac{\deg(\Phi)}{d-1}\log|\alpha|.
\end{eqnarray*}
and so the sum is independent of the choice of $F$.
 Linearity follows from the fact that $G_F(\Phi\Psi)=G_F(\Phi)+G_F(\Psi)$.

For $M\in\operatorname{GL}_{N+1}$, we have $M_*\Phi(M_*Q)=\Phi(Q)$, giving the covariance of $g_f$.
Linearity is clear from the definition.
The last equality follows from the first and linearity:
\[g_f(f^*D, f^*P)=g_f(D, f_*f^*P)=d^Ng_f(D, P).\]

 Note that  $\Phi(F_*Q)= F^*\Phi(Q)$ and $\Phi(F^*Q)= F_*\Phi(Q)$.
On the other hand, we have that $\deg(F^* \Phi) =d\deg(\Phi)$, and above that $G_F(F_*Q)=dG_F(Q)$ and
\[G_F(F^*\Phi)=G_F(\Phi).\]
Combining this, we have
\begin{eqnarray*}
g_f(D, f_*P)&=&-\log|\Phi(F_*Q)|+G_F(\Phi)+\deg(\Phi)G_F(F_*Q)\\
&=& - \log|F^*\Phi(P)|+G_F(F^*\Phi)+d\deg(\Phi)G_F(Q)\\
&=& - \log|F^*\Phi(Q)|+G_F(F^*\Phi)+\deg(F^*\Phi)G_F(Q)\\
&=&g_f(f^*D, P).
\end{eqnarray*}

The other formula is similar, but we have to be somewhat careful, as $F^*P$ is a lift of $\deg(f)f^*P$ and $F_*D$ is a lift of $\deg(f)f_*D$. 
\end{proof}

\begin{remark}
Note that the maximum possible error in~\eqref{eq:localerror} is independent of choice of coordinates, and so the $\lambda_{\operatorname{Hom}_d^N}(f)$ in the error term could be replaced by the infimum of $\lambda_{\operatorname{Hom}_d^N}(f^M)$ as $M$ varies over $\operatorname{PGL}_{N+1}$. In particular, the error term in~\eqref{eq:localerror} vanishes if $f$ has potential good reduction.
\end{remark}

\begin{proposition}
In the case $N=1$, the definition above agrees with that of Baker and Rumely~\cite{br}.	
\end{proposition}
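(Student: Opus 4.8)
The plan is to reduce to the two-point function and then match it, on the nose, with the homogeneous description of the Baker--Rumely Arakelov--Green function. By linearity of $g_f$ in the divisor slot it suffices to treat $D=(Q')$, a single point of $\PP^1$, whose defining form is the linear form $\Phi_{Q'}(\mathbf{X})=Q_1'X_0-Q_0'X_1$ attached to a lift $Q'=(Q_0',Q_1')$. Since $\deg(Q')=\deg(\Phi_{Q'})=1$, formula~\eqref{eq:gdef} becomes
\[g_f((Q'),(Q))=-\log|Q'\wedge Q|+G_F(\Phi_{Q'})+G_F(Q),\]
where $Q'\wedge Q=Q_0'Q_1-Q_1'Q_0$ and $Q$ is a lift of $P$. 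The only substantive point is to rewrite the \emph{form} escape rate $G_F(\Phi_{Q'})$ in terms of the \emph{point} escape rate $G_F(Q')$ of the zero of $\Phi_{Q'}$, so that the expression above becomes symmetric and homogeneous in $Q'$ and $Q$.

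For this I would first identify the pushforward: $F_*\Phi_{Q'}$ is a form of degree $d$ on $\PP^1$ whose only zero is $f(Q')$, hence $F_*\Phi_{Q'}=c_F\,\Phi_{F(Q')}^{\,d}$ for a nonzero scalar $c_F$; computing the resultant in~\eqref{eq:push} by eliminating the linear form $\Phi_{Q'}$ (equivalently, a linear change of coordinates carrying it to a coordinate, tracking the determinant factor in the Macaulay resultant) gives $R(F,\Phi_{Q'})=\pm\Phi_{F(Q')}^{\,d}$, so that $|c_F|=|\Res(F)|^{-1}$, \emph{independent of $Q'$}. Now set $\psi(Q')=G_F(\Phi_{Q'})-G_F(Q')$. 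By Lemma~\ref{lem:divescape}(v) (applied to $\Phi_{\alpha Q'}=\alpha\Phi_{Q'}$) and Lemma~\ref{lem:ptescape}(iii), $\psi$ descends to a function on $\PP^1$; by Lemma~\ref{lem:divescape}(i) (noting $m(\Phi_{Q'})=\log\|Q'\|$) together with Lemma~\ref{lem:ptescape}(i), both $G_F(\Phi_{Q'})$ and $G_F(Q')$ differ from $\log\|Q'\|$ by a $Q'$-bounded amount, so $\psi$ is bounded on $\PP^1$. Applying $G_F$ to $F_*\Phi_{Q'}=c_F\,\Phi_{F(Q')}^{\,d}$ and using Lemma~\ref{lem:divescape}(ii),(iii),(v) and Lemma~\ref{lem:ptescape}(ii) gives the functional equation $\psi(f(Q'))=d\,\psi(Q')+\tfrac1d\log|\Res(F)|$. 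Hence $\tilde\psi:=\psi+\tfrac{1}{d(d-1)}\log|\Res(F)|$ satisfies $\tilde\psi\circ f=d\,\tilde\psi$ and is bounded, so $\tilde\psi\equiv 0$ by the usual averaging; therefore $G_F(\Phi_{Q'})=G_F(Q')-\tfrac{1}{d(d-1)}\log|\Res(F)|$ and
\[g_f((Q'),(Q))=-\log|Q'\wedge Q|+G_F(Q')+G_F(Q)-\frac{1}{d(d-1)}\log|\Res(F)|,\]
which is now manifestly symmetric in $Q'$ and $Q$.

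It then remains to recognize the right-hand side as the Baker--Rumely dynamical Arakelov--Green function $g_{\mu_f}$ attached to the canonical measure $\mu_f$ on $\PP^1$ (resp.\ on the Berkovich line when $v$ is non-archimedean, restricted to classical points). Here I would invoke the homogeneous description of $g_{\mu_f}$ in~\cite{br}: it is $G_F(\tilde x)+G_F(\tilde y)-\log|\tilde x\wedge\tilde y|$ minus a normalizing constant, and that constant is exactly $\tfrac{1}{d(d-1)}\log|\Res(F)|$ --- the homogeneous transfinite diameter/capacity of the filled Julia set of $F$, equivalently the constant making $\iint g_{\mu_f}\,d\mu_f\,d\mu_f=0$. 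Alternatively, one can avoid quoting their formula and instead verify that our $g_f$ on $\PP^1$ has the properties characterizing $g_{\mu_f}$: symmetry (just shown), the logarithmic singularity $g_f((Q'),(Q))=-\log|Q'\wedge Q|+O(1)$ along the diagonal (clear), the functional equations relative to $f$ (established in the preceding Proposition, matching those of $g_{\mu_f}$), and the normalization, for which one computes the double integral against $\mu_f$ using $f$-invariance of $\mu_f$ and the standard evaluation of the integral of $G_F$ along a section in terms of $\log|\Res(F)|$.

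The main obstacle I anticipate is precisely this last identification: confirming that the additive constant $\tfrac{1}{d(d-1)}\log|\Res(F)|$ is Baker--Rumely's normalization, which requires either a careful citation of their homogeneous formula or a re-derivation of the resultant--capacity identity (and attention to sign conventions). The computation of $c_F$ in the middle paragraph, while it must be done with some care with the Macaulay resultant, is routine, and the boundedness-plus-scaling argument forcing $\tilde\psi\equiv 0$ is entirely standard.
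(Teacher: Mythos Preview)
Your proposal is correct and follows the same overall strategy as the paper: reduce by linearity to degree-one divisors, identify a point $Q'$ with the linear form $\Phi_{Q'}$, establish the key identity $G_F(\Phi_{Q'})=G_F(Q')-\tfrac{1}{d(d-1)}\log|\Res(F)|$, and then match the resulting symmetric expression with Baker--Rumely's formula. The technical route to that key identity differs, however. The paper computes $F_*\Phi_Q=\Phi_{F(Q)}^d/\Res(F)$ by a direct evaluation of the pushforward product at $(1,0)$ (or $(0,1)$), then iterates to get an explicit formula for $F^k_*\Phi_Q$, applies $m(\cdot)$, and takes the limit. You instead extract only $|c_F|=|\Res(F)|^{-1}$ (via the resultant description~\eqref{eq:push}), derive the functional equation $\psi\circ f=d\psi+\tfrac1d\log|\Res(F)|$, and conclude by the bounded-plus-scaling argument that $\tilde\psi\equiv 0$. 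Your approach is a bit more conceptual and sidesteps tracking the geometric-series exponent of $\Res(F)$ through the induction; the paper's is more hands-on but pins down the constant (not just its absolute value) along the way. For the final step, the paper simply asserts that the displayed formula is Baker--Rumely's, exactly the citation option you flag; your caution about the normalization constant is well placed but no re-derivation is ultimately needed.
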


\begin{proof}
Note that in the case $N=1$, we have a natural identification of divisors of degree 1 with points, and we may similarly identify points \[Q=(Q_0, Q_1)\in \AA^2\setminus\{0\}\] with linear homogeneous forms \[\Phi_Q(\mathbf{X})=\mathbf{X}\wedge Q=X_1Q_0-X_0Q_1,\] noting that $m(\Phi_Q)=\log\|Q\|$.

 Let $f:\PP^1\to\PP^1$ have degree at least 2, and let $F:\AA^2\to\AA^2$ be a lift. Note that the rational function
$\Phi_{F(Q)}^d/F_*\Phi_Q$ on $\PP^1$ has trivial divisor, and therefore is constant. Assuming that $F_1(1, 0)\neq 0$ we may evaluate the function here to determine the constant.
 On the one hand, if $F_1(\mathbf{X})=\prod (\mathbf{X}\wedge Q_i)$, then solutions to $F_1(P)=0$ are exactly points $P=\xi Q_i$ for some $\xi$, so simulteneous solutions to $F_1(P)=0$ and $F_0(P)=1$ are values $\xi Q_i$ with $\xi^{-d}=F_0(Q_i)$. This gives
\begin{align*}
F_*\Phi_Q(1, 0)&=\prod_{F(P)=(1, 0)}P \wedge Q\\
&=\prod_{i=1}^d \prod_{\xi_i^{-d}=F_0(Q_i)} (\xi_i Q_i)\wedge Q\\
&=\prod_{i=1}^d  F_0(Q_i)^{-1}(Q\wedge  Q_i)^d\\
&=\frac{\prod_{i=1}^d(Q\wedge Q_i)^d}{\prod_{i=1}^d F_0(Q_i)}\\
&=\frac{F_1(Q)^d}{\Res(F)},
\end{align*}
and so (recalling that $\Phi_{F(Q)}^d/F_*\Phi_Q$ is constant)
\[F_*\Phi_Q=\frac{\Phi_{F(Q)}^d}{\Res(F)}.\]
If $F_1(1, 0)=0$, we obtain the same by evaluating instead at $(0, 1)$.
By induction
\begin{align*}
d^{-2k}m(F^k_*\Phi_Q)&=d^{-2k}m\left(\frac{\Phi_{F^k(Q)}^{d^k}}{\Res(F)^{d^{2k-2}+\cdots +d^{k-1}}}\right)\\
&=d^{-2k}m\left(\Phi_{F^k(Q)}^{d^k}\right)-\frac{d^{2k-2}+\cdots +d^{k-1}}{d^{2k}}\log|\Res(F)|\\
&=d^{-k}\log\|F^k(Q)\|-\frac{1-d^{-k}}{d(d-1)}\log|\Res(F)|\\
\text{and so }\quad G_F(\Phi_Q)&= G_F(Q)-\frac{1}{d(d-1)}\log|\Res(F)|
\end{align*}
by taking $k\to\infty$. In other words, if $Q', P'\in \AA^2$ are lifts of the points $Q, P\in \PP^1$, and $[P]$ is the divisor associated to $P$, one has
\[g_f([P], Q)=-\log|P'\wedge Q'|+G_F(P')+G_F(Q')-\frac{1}{d(d-1)}\log|\Res(F)|,\]
and in particular $g_f([P], Q)=g_f(P, Q)$, where the latter is as defined by Baker and Rumely~\cite{br}.
\end{proof}

%
%

\subsection{Relation to familiar quantities in the complex setting}

Here we comment on how the quantity $G_F(\Phi)$ relates to well-known quantities from complex dynamics.
We mostly follow the notation of Bassanelli and Berteloot~\cite{berteloot}, and our goal is just to show that, defining $\|\Phi\|_{G_F}$ on $\PP^N$ by
\[\|\Phi\|_{G_F}(P):=\frac{|\Phi(P)|}{e^{\deg(\Phi)G_F(P)}}\]
for any choice of homogeneous coordinates, that we have
\begin{equation}\label{eq:complex}G_F(\Phi)=\int_{\PP^N}\log\|\Phi\|_{G_F}\mu_f\end{equation}
(where $\mu_f$ is the measure of maximal entropy associated to $f$).

For $\pi:\CC^{N+1}\setminus\{\mathbf{0}\}\to \PP^N$ the usual map, set
\[g_F\circ\pi=G_F-\log\|\cdot\|\]
and 
\[T=dd^cg_F+\omega\]
(where $\omega$ is the Fubini-Study current).

As per~\cite{berteloot}, we define for $\Phi$ a homogeneous form $\|\Phi\|_{G_F}$ as above, and 
\[\|\Phi\|_0(P)=\frac{|\Phi(P)|}{\log\|z\|^{\deg(\Phi)}}=\|\Phi\|_{G_F}(P)e^{\deg(\Phi)g_F(P)}.\]

\begin{lemma}\label{lem:pushes}
For $f:\PP^N\to\PP^N$ and $\phi:\PP^N\to \RR$, set
\[f_*\phi(P)=\prod_{f(Q)=P}\phi(Q).\]
Then
\[\|F_*\Phi\|_G=(f_*\|\Phi\|_G)^d.\]
\end{lemma}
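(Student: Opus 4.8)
The plan is to prove the identity $\|F_*\Phi\|_G = (f_*\|\Phi\|_G)^d$ pointwise on $\PP^N$ by unwinding both sides through their definitions and comparing. First I would fix a point $P \in \PP^N$ together with a lift $Q \in \CC^{N+1}\setminus\{\mathbf{0}\}$, and enumerate the (generically $d^N$) preimages $f(R_i) = P$, choosing lifts $R_i$ of the $R_i$. The key structural input is the relation, already implicit in the pushforward definition, that the fibre $F^{-1}(Q)$ consists of $d^N$ points $R_i$ lying over the $R_i$ (scaled so that $F(R_i) = Q$), so that $F_*\Phi(Q) = \prod_i \Phi(R_i)$. By definition, $\|F_*\Phi\|_G(P) = |F_*\Phi(Q)| / e^{\deg(F_*\Phi)G_F(Q)}$, and $\deg(F_*\Phi) = d^N \deg(\Phi)$.

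The main computation is then to control the two exponential factors. For the numerator, $|F_*\Phi(Q)| = \prod_i |\Phi(R_i)|$, which already has the right shape to become a product over the fibre. For the $G_F$ terms, I would invoke Lemma~\ref{lem:ptescape}(ii), namely $G_F(F(R)) = dG_F(R)$, applied to each preimage: since $F(R_i) = Q$, we get $G_F(Q) = dG_F(R_i)$ for every $i$, hence $d^N \deg(\Phi) G_F(Q) = d \deg(\Phi)\sum_i G_F(R_i)$ (using that there are $d^N$ preimages, counted with multiplicity). Therefore
\begin{align*}
\|F_*\Phi\|_G(P) &= \frac{\prod_i |\Phi(R_i)|}{e^{d\deg(\Phi)\sum_i G_F(R_i)}} = \prod_i \left(\frac{|\Phi(R_i)|}{e^{\deg(\Phi)G_F(R_i)}}\right)^d = \prod_i \|\Phi\|_G(R_i)^d = \left(\prod_{f(R)=P}\|\Phi\|_G(R)\right)^d,
\end{align*}
which is exactly $(f_*\|\Phi\|_G)^d(P)$ by the definition of $f_*\phi$ in Lemma~\ref{lem:pushes}.

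The main obstacle is bookkeeping around multiplicities and the locus of bad behaviour: the fibre $F^{-1}(Q)$ has cardinality $d^N$ only when counted with multiplicity (and when $P$ avoids the branch locus), and one must check that the identity $F_*\Phi(Q) = \prod \Phi(R_i)$ and the counting $\sum_i G_F(R_i)$ are compatible with these multiplicities — this is precisely the content of the original definition of $F_*\Phi$ as a product over solutions in $\overline{\CC_v(\mathbf{Y})}$, so it holds as an identity of functions on a Zariski-dense open set, and both sides of the claimed identity are continuous (indeed the $\|\cdot\|_G$ are well-defined pointwise), so the identity extends by continuity to all of $\PP^N$. One should also note that the expression is manifestly independent of the chosen lifts $Q$ and $R_i$ on both sides, since $\|\cdot\|_G$ is defined for any choice of homogeneous coordinates; this is what makes the descent from $\AA^{N+1}$ to $\PP^N$ legitimate. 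Everything else is the routine substitution carried out in the display above.
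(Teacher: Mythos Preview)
Your overall strategy is the same as the paper's one-line proof, but there is a genuine counting error that breaks the computation. You assert that ``the fibre $F^{-1}(Q)$ consists of $d^N$ points $R_i$'' and hence that $F_*\Phi(Q)=\prod_{i=1}^{d^N}\Phi(R_i)$. This is false: $F:\AA^{N+1}_*\to\AA^{N+1}_*$ has topological degree $d^{N+1}$, not $d^N$. Concretely, if $R\in\PP^N$ satisfies $f(R)=P$ and $\tilde R$ is any lift, then $F(\tilde R)=\lambda Q$ for some $\lambda\neq 0$, and the equation $F(\mu\tilde R)=\mu^d\lambda Q=Q$ has $d$ solutions in $\mu$. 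Thus each of the $d^N$ points of $f^{-1}(P)$ lifts to $d$ points of $F^{-1}(Q)$, and this $d$-to-$1$ lifting is exactly the observation the paper singles out.

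The miscount shows up as a visible algebra error in your display: the claimed equality
\[
\frac{\prod_i |\Phi(R_i)|}{e^{d\deg(\Phi)\sum_i G_F(R_i)}}
=\prod_i\left(\frac{|\Phi(R_i)|}{e^{\deg(\Phi)G_F(R_i)}}\right)^d
\]
is wrong, since the right-hand numerator is $\prod_i|\Phi(R_i)|^d$, not $\prod_i|\Phi(R_i)|$. The fix is to use the correct fibre count: over each $R_i\in f^{-1}(P)$ the $d$ lifts differ by $d$-th roots of unity, so $\prod_{j=1}^d|\Phi(\zeta^j\tilde R_i)|=|\Phi(\tilde R_i)|^d$, giving $|F_*\Phi(Q)|=\prod_{i=1}^{d^N}|\Phi(\tilde R_i)|^d$. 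With that corrected numerator your denominator argument via $G_F(Q)=dG_F(\tilde R_i)$ goes through unchanged and yields $(f_*\|\Phi\|_G)^d$ as desired.
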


\begin{proof}
This follows directly from the definitions, noting that  every solution to $f(Q)=\pi(P)$ lifts to $d$ solutions to $F(Q)=P$. 
\end{proof}

\begin{lemma}
\[\int_{\PP^N}\log\|\Phi\|_Gdd^cg_F\wedge T^{N-1}=\int_{\PP^N}g_Fdd^c\log\|\Phi\|_G\wedge T^{N-1}.\]	
\end{lemma}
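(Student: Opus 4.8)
The statement to prove is an integration-by-parts identity:
\[\int_{\PP^N}\log\|\Phi\|_Gdd^cg_F\wedge T^{N-1}=\int_{\PP^N}g_Fdd^c\log\|\Phi\|_G\wedge T^{N-1}.\]

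This is a symmetry of the bilinear pairing $(u,v)\mapsto\int u\,dd^cv\wedge T^{N-1}$, which is exactly the classical Stokes-type identity for $dd^c$ on a compact Kähler manifold: for functions (or, more properly, quasi-psh functions with suitable regularity) $u,v$ and a closed positive current $S=T^{N-1}$ of bidimension $(1,1)$, one has $\int u\,dd^cv\wedge S=\int v\,dd^cu\wedge S$, both being equal to $-\int du\wedge d^cv\wedge S$ after integrating by parts and using $dS=0$. So my plan is simply to justify that this formal computation is legitimate here, i.e. that the objects in play have enough regularity.

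First I would record that $g_F$ is a bounded, continuous function on $\PP^N$: indeed $g_F\circ\pi=G_F-\log\|\cdot\|$, and by Lemma~\ref{lem:ptescape}(i) the difference $G_F(P)-\log\|P\|-\tfrac1{d-1}\log\|F\|$ is bounded between two explicit constants, while it is also continuous since $G_F$ is a locally uniform limit of the continuous functions $d^{-k}\log\|F^k(\cdot)\|$; hence $g_F$ descends to a bounded continuous function on $\PP^N$ and $T=dd^cg_F+\omega$ is a closed positive $(1,1)$-current (this is the standard construction of the Green current / $\mu_f=T^N$). Next, $\log\|\Phi\|_G$ is the difference of $\log|\Phi(P)|$ (locally a plurisubharmonic function minus a smooth one after passing to an affine chart and a choice of homogeneous representative) and $\deg(\Phi)G_F(P)$; modulo the smooth normalizing terms it is a quasi-psh function with analytic singularities along $D=\{\Phi=0\}$, and in particular $dd^c\log\|\Phi\|_G+\deg(\Phi)T\geq 0$ is a well-defined closed positive current (the dynamical Green current of the divisor $D$). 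Both $\log\|\Phi\|_G$ and $g_F$ are thus in $L^1$ against the positive closed current $T^{N-1}$, and $g_F$ is moreover bounded, so all four integrals in sight are finite.

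With finiteness and the needed regularity in hand, the identity is the Bedford--Taylor / Chern--Levine--Nirenberg integration-by-parts formula: writing $S=T^{N-1}$ (closed, positive, of bidimension $(1,1)$), we have
\[\int_{\PP^N}\log\|\Phi\|_G\,dd^cg_F\wedge S=-\int_{\PP^N}d\log\|\Phi\|_G\wedge d^cg_F\wedge S=\int_{\PP^N}g_F\,dd^c\log\|\Phi\|_G\wedge S,\]
each equality being Stokes' theorem on the compact manifold $\PP^N$ applied to the appropriate exact form, legitimate because $dS=0$ and one of the two potentials ($g_F$) is bounded while the other has only a pluripolar (codimension-one analytic) singular set, which is negligible for $S$. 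Concretely I would invoke the standard fact (see e.g. the Bedford--Taylor theory, or Demailly) that for a closed positive current $S$ of bidimension $(1,1)$ and quasi-psh $u,v$ with $u$ bounded, $\int u\,dd^cv\wedge S=\int v\,dd^cu\wedge S$; here we apply it with $u=g_F$ bounded and $v=\log\|\Phi\|_G$, using that $dd^cg_F=T-\omega$ and $dd^c\log\|\Phi\|_G$ differ from honest currents by the smooth forms $\omega$ and $-\deg(\Phi)\omega$ respectively, for which the identity $\int u\,\omega\wedge S=\int v$-type terms is just Stokes for smooth forms.

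The main obstacle is purely a regularity/justification issue rather than a computational one: one must be sure the integration by parts is valid across the singular locus $D$ of $\log\|\Phi\|_G$ and across the non-smoothness of $g_F$ (which is merely continuous, not $C^2$). This is handled by the standard approximation argument — approximate $g_F$ by a decreasing sequence of smooth functions $g_F^{(j)}$ with $dd^cg_F^{(j)}+\omega\geq 0$ and $\log\|\Phi\|_G$ by $\max(\log\|\Phi\|_G,-j)$, apply the smooth/bounded Stokes identity at each stage, and pass to the limit using the continuity theorems of Bedford--Taylor for monotone sequences of quasi-psh functions against the fixed closed positive current $T^{N-1}$. Since $g_F$ and $\log\|\Phi\|_G$ both arise as uniform (resp. decreasing) limits of the explicit approximants coming from the definitions of $G_F$, the hypotheses of those continuity theorems are met, and the identity follows. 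I would present this compactly, citing~\cite{berteloot} and the underlying pluripotential theory for the approximation step rather than reproving it.
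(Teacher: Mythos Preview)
Your proposal is correct and matches the paper's approach: the paper simply states that the proof is identical to the special case $\Phi=J_F$ treated in~\cite[p.~14]{berteloot}, and what you have written is precisely a spelling-out of that Bedford--Taylor integration-by-parts argument (boundedness of $g_F$, quasi-psh structure of $\log\|\Phi\|_G$, closedness of $T^{N-1}$, and monotone approximation). There is no substantive difference in method; you have just supplied the details the paper delegates to the reference.
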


\begin{proof}
The proof is identical to that of the special case $\Phi=J_F$ in~\cite[p.~14]{berteloot}.	
\end{proof}

\begin{proposition}
\[G_F(\Phi)=\int_{\PP^N}\log\|\Phi\|_{G_F}\mu_f.\]
\end{proposition}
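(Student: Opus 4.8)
The plan is to mimic the standard Bedford--Taylor / telescoping argument that identifies the Green potential of an invariant current with an integral against the measure of maximal entropy, using the two lemmas just proved. First I would record the basic structural facts: $\mu_f = T^N$ where $T = dd^c g_F + \omega$, that $f^* T = d\,T$ (equivalently $g_F \circ f = d\, g_F$ up to the pluriharmonic ambiguity, which is exactly the content of $G_F(F(\cdot)) = dG_F(\cdot)$ from Lemma~\ref{lem:ptescape}(ii) after subtracting $\log\|\cdot\|$), and the change-of-variables formula $\int \phi \, f^*(\alpha) = \int (f_*\phi)\, \alpha$ for the $d^N$-to-one map $f$. I would also note that $\log\|\Phi\|_{G_F}$ is globally defined on $\PP^N$ (the homogeneity ambiguities cancel by construction), and that $dd^c \log\|\Phi\|_{G_F} = [D] - \deg(\Phi)\, T$ as currents on $\PP^N$, where $[D]$ is the divisor current, since $dd^c \log|\Phi| = [D] - \deg(\Phi)\,\omega$ upstairs and $dd^c(\deg(\Phi) g_F) = \deg(\Phi)(T - \omega)$.

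Next, I would set up the telescoping identity at the level of integrals. Write $I = \int_{\PP^N} \log\|\Phi\|_{G_F}\, \mu_f$. The key exchange is the symmetry lemma just proved,
\[\int_{\PP^N}\log\|\Phi\|_{G_F}\, dd^c g_F \wedge T^{N-1} = \int_{\PP^N} g_F \, dd^c\log\|\Phi\|_{G_F}\wedge T^{N-1},\]
which lets me move a $dd^c g_F$ off of $\log\|\Phi\|_{G_F}$ and onto it. Expanding $\mu_f = T^N = (dd^c g_F + \omega)\wedge T^{N-1}$, applying this exchange to the $dd^c g_F$ factor, and using $dd^c\log\|\Phi\|_{G_F} = [D] - \deg(\Phi)T$, I would get $I$ expressed as $\int g_F\, [D]\wedge T^{N-1} - \deg(\Phi)\int g_F\, T^N + \int \log\|\Phi\|_{G_F}\,\omega\wedge T^{N-1}$. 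Iterating the same manoeuvre, or alternatively recognizing the resulting expression via the functional equation, is what will make everything collapse; the cleanest route is probably to show directly that $I$ satisfies the defining functional equation $I = d^{-(N+1)} I'$ of $G_F$, where $I'$ is the same integral with $\Phi$ replaced by $F_*\Phi$. Concretely, using Lemma~\ref{lem:pushes}, $\|F_*\Phi\|_{G_F} = (f_*\|\Phi\|_{G_F})^d$, so $\log\|F_*\Phi\|_{G_F} = d\, f_*(\log\|\Phi\|_{G_F})$, and then the change-of-variables formula together with $f^*\mu_f = d^N \mu_f$ gives $\int \log\|F_*\Phi\|_{G_F}\, \mu_f = d\int f_*(\log\|\Phi\|_{G_F})\,\mu_f = d \int \log\|\Phi\|_{G_F}\, f^*\mu_f = d^{N+1} \int \log\|\Phi\|_{G_F}\,\mu_f$. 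Hence $k\mapsto d^{-k(N+1)}\int \log\|F_*^k\Phi\|_{G_F}\,\mu_f$ is constant, equal to $I$.

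Finally I would pin down that this constant is indeed $G_F(\Phi) = \lim_k d^{-k(N+1)} m(F_*^k\Phi)$. For this, compare $\int_{\PP^N}\log\|F_*^k\Phi\|_{G_F}\,\mu_f$ with $m(F_*^k\Phi) = \int \log|F_*^k\Phi|\, d\mu$ (Mahler/Lebesgue on the torus) by writing $\log\|F_*^k\Phi\|_{G_F} = \log|F_*^k\Phi| - \deg(F_*^k\Phi)\, G_F$ on a fixed affine chart; since $\deg(F_*^k\Phi) = d^{kN}\deg(\Phi)$ and $G_F$ is bounded on any compact set (it differs from $\log\|\cdot\|$ by the bounded function $g_F$, cf. Lemma~\ref{lem:ptescape}(i)), the difference $d^{-k(N+1)}|\int\log\|F_*^k\Phi\|_{G_F}\mu_f - m(F_*^k\Phi)|$ is $O(d^{-k}\deg(\Phi))$ plus a term comparing two different ways of averaging $\log|F_*^k\Phi|$; the latter is controlled because both $\mu_f$ and the torus measure are probability measures and $d^{-k(N+1)}\log|F_*^k\Phi|$ converges uniformly on compacta to the same pluriharmonic-modulo-$G_F$ limit. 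Taking $k\to\infty$ identifies the constant with $G_F(\Phi)$, completing the proof. The main obstacle I anticipate is the bookkeeping in the first approach (the repeated $dd^c$-exchanges and keeping track of the $\omega\wedge T^{N-1}$ boundary terms), which is why I would favour the functional-equation route via Lemma~\ref{lem:pushes}; there the only delicate point is justifying the change-of-variables and $f^*\mu_f = d^N\mu_f$ at the level of these (continuous, quasi-psh-difference) integrands, which is standard pluripotential theory once one notes $\log\|\Phi\|_{G_F}$ is a difference of bounded quasi-psh functions.
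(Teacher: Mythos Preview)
Your proposal follows the same two-step strategy as the paper: first use Lemma~\ref{lem:pushes} together with $f^*\mu_f=d^N\mu_f$ to show that the integral $\nu_F(\Phi):=\int_{\PP^N}\log\|\Phi\|_{G_F}\,\mu_f$ satisfies $\nu_F(F_*\Phi)=d^{N+1}\nu_F(\Phi)$, and then establish a comparison $\nu_F(\Phi)=m(\Phi)+O(\deg(\Phi))$ so that iterating on $F_*^k\Phi$ and dividing by $d^{k(N+1)}$ forces the error to vanish. The functional-equation step in your middle paragraph is exactly the paper's argument.

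The difference lies in how the comparison step is executed. The paper does it cleanly by the telescoping you sketch and then set aside: writing $T^N=(dd^cg_F+\omega)\wedge T^{N-1}$, applying the exchange lemma to move $dd^cg_F$ onto $\log\|\Phi\|_0$, and using that $dd^c\log\|\Phi\|_0\wedge T^j\wedge\omega^{N-j-1}$ is a positive measure of mass $\deg(\Phi)$ (so integrating the bounded $g_F$ against it gives $O(\deg(\Phi))$). Iterating $N$ times yields $\int\log\|\Phi\|_0\,T^N=\int\log\|\Phi\|_0\,\omega^N+O(\deg(\Phi))=m(\Phi)+O(\deg(\Phi))$, and the passage from $\|\Phi\|_0$ to $\|\Phi\|_{G_F}$ costs another $O(\deg(\Phi))$ since $g_F$ is bounded.

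Your alternative in the final paragraph---comparing $\int\log\|F_*^k\Phi\|_{G_F}\,\mu_f$ with $m(F_*^k\Phi)$ by appealing to ``uniform convergence on compacta'' of $d^{-k(N+1)}\log|F_*^k\Phi|$---is the weak point. That function has $-\infty$ singularities along the divisor, so there is no uniform convergence, and the mere fact that $\mu_f$ and $\omega^N$ are both probability measures does not control the difference of integrals of an unbounded $L^1$ function. The bound you need, namely $\big|\int\log\|\Psi\|_0\,\mu_f-\int\log\|\Psi\|_0\,\omega^N\big|=O(\deg(\Psi))$ uniformly in $\Psi$, is exactly what the exchange/telescoping argument delivers and is not obtainable from your compactness sketch. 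So: keep your functional-equation step as is, but for the identification of the constant, carry through the $T^j\wedge\omega^{N-j}$ telescoping you started rather than the compactness heuristic.
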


\begin{proof}
For convience we write $\nu_F(\Phi)$ for the right-hand-side of the claimed equality.

First, $|g_F|\leq C$ on $\PP^N$ for some constant $C$ depending on $f$. From \cite[p.~14 after Lemma~4.2]{berteloot}, we have
\begin{align*}
\left|\int_{\PP^N}\log\|\Phi\|_0 dd^cg_F\wedge T^j\wedge \omega^{N-j-1}\right|&=\left|\int_{\PP^N}g_F dd^c\log\|\Phi\|_0\wedge T^j\wedge\omega^{N-j-1}\right|\\
&\leq C\deg(\Phi),
\end{align*}
since $dd^c\log\|\Phi\|_0\wedge T^j\wedge \omega^{N-j}$ is a positive measure of mass $\deg(\Phi)$ on $\PP^N$. It follows that for each $j$,
\begin{align*}
	\int_{\PP^N}\log\|\Phi\|_0  T^{j+1}\wedge \omega^{N-j-1}&=\int_{\PP^N}\log\|\Phi\|_0  T^{j}\wedge \omega\wedge\omega^{N-j-1}\\&\quad+\int_{\PP^N}\log\|\Phi\|_0 T^j\wedge  dd^cg_F\wedge\omega^{N-j-1}\\
	&=\int_{\PP^N}\log\|\Phi\|_0  T^{j}\wedge \omega^{N-j}+O(\deg(H)),
\end{align*}
and hence
\[\int_{\PP^N}\log\|\Phi\|_0  T^{N}=\int_{\PP^N}\log\|\Phi\|_0  \omega^{N}+O(\deg(\Phi)),\]
where the implied constant depends on $f$ and $N$, but not $\deg(\Phi)$ or $\Phi$.
We also have
\[\log\|\Phi\|_0=\log\|\Phi\|_{G_F}+\deg(\Phi)g_F,\]
and so
\[\int_{\PP^N}\log\|\Phi\|_0\mu_f=\int_{\PP^N}\log\|\Phi\|_{G_F}\mu_f+O(\deg(\Phi)).\]
On the other hand,
\[m(\Phi)= \int_{\PP^N}\log\|\Phi\|_0\omega^N,\]
and so
\begin{align*}
	G_F(\Phi)&=m(\Phi)+O(\deg(\Phi))\\&=\int_{\PP^N} \log\|\Phi\|_0 \omega^N+O(\deg(\Phi))\\
	&=\int_{\PP^N} \log\|\Phi\|_0 T^N+O(\deg(\Phi))\\
	&=\int_{\PP^N} \log\|\Phi\|_{G_F} T^N+O(\deg(\Phi))\\
	&=\nu_F(\Phi)+O(\deg(\Phi)),
\end{align*}
where the implied constant does not depend on $\Phi$.

Now note that it follows from Lemma~\ref{lem:pushes} that
\begin{align*}
\nu_F(F_*\Phi)&=\int_{\PP^N} \log\|F_*\Phi\|_{G}\mu_f\\
	&=d\int_{\PP^N} f_*\log\|\Phi\|_{G}\mu_f\\
	&=d\int_{\PP^N} \log\|\Phi\|_{G}f^*\mu_f\\
	&=d^{N+1} \nu_F(H)
\end{align*}
since $f^*\mu_f=d^N\mu_f$. From this, since $\deg(F_*\Phi)=d^{N}\deg(\Phi)$, we have
\begin{align*}
G_F(\Phi)&=d^{-k(N+1)}G_F(F_*^k\Phi)\\
&=d^{-k(N+1)}\nu_F(F^k_*\Phi)+d^{-k(N+1)}O(\deg(F_*^k\Phi))\\
&=	\nu_F(\Phi)+                                                                                                                                                                                                                                                                                                                                                                                                                                                                                                                                                                                                                                                                                                                                                                                                                                                                                                                                                                                                                                                     O(d^{-k}\deg(\Phi))
\end{align*}
for any $k$, and we take $k\to \infty$.
\end{proof}

\begin{remark}
Note that the sum of Lyapunov exponents $L(f)$  can be computed in an elementary way by this limiting process of looking at the largest coefficient of $J_F$ pushed forward $k$ times. In particular, from ~\cite{berteloot} and~\eqref{eq:complex} we have for $J_F=\det(DF)$ that
\begin{align*}
L(f)&=L(F)-\log d\\
&=\int_{\PP^N}\log\|J_F\|_{G}\mu_f - \log d\\
&=G_F(J_F)-\log d.	
\end{align*}
We note that the right-hand-side is \emph{prima facie} possibly dependent on the choice of lift $F$, but is actually independent by Lemma~\ref{lem:divescape}, since $J_{\alpha F}=\alpha^{N+1}J_F$.

One may similarly check that for a fixed point $P$ of $f$, we have
\[g_f(C_f, P)=L(f)-\log|\det\Theta_{f, P}|,\]
where $C_f$ is the critical divisor of $f$, and $\Theta_{f, P}$ is the action of $f$ on the tangent space at $P$.	
\end{remark}

\section{Global heights}\label{sec:global}
In this section we work over a number field $K$, although everything applies as well, \emph{mutatis mutandis}, to a field with a collection of absolute values satisfying a product formula. For each place $v$ of $K$ we fix a complete, algebraically closed extension $\CC_v\supseteq K$, and quantities from the previous section obtain a subscript $v$.

Given a divisor $D$ on $\PP^N$ over $K$,  we define $h(D)$ as in~\eqref{eq:phil}, which is unchanged by field extension. For
 $f:\PP^N\to\PP^N$ of degree $d\geq 2$ and any homogeneous form $\Phi$ whose vanishing defines $D$, set
 \[\hat{h}_f(D)=\sum_{v\in M_K}\frac{[K_v:\QQ_v]}{[K:\QQ]}G_{F, v}(\Phi).\]
 Note that this definition makes sense, since $G_{F, v}(\Phi)=0$ for any place $v$ at which the coefficients of $F$, the coefficients of $\Phi$, and $\Res(F)^{-1}$ are all integral (which is all but finitely many places), and does not depend on the choice of $F$ or $\Phi$ by Lemma~\ref{lem:divescape} and the product formula. Since $F_*\Phi$ defines $dD$ for any lift $F$ of $f$, we have \[\hat{h}_f(f_*D)=d^N\hat{h}_f(D)\qquad\text{and}\qquad\hat{h}_f(f^*D)=\hat{h}_f(D),\] where the latter follows from the former and $f_*f^*D=d^ND$. It is also clear that $\hat{h}_{f^n}=\hat{h}_f$. Note also that, since $\hat{h}_f(D)=h(D)+O_f(\deg(D))$, by Lemma~\ref{lem:divescape} and the definition~\eqref{eq:phil}, we have \emph{post hoc} that 
 \begin{equation}\label{eq:canhlimit}\hat{h}_f(D)=\lim_{k\to\infty}\frac{h(f^k_* D)}{d^{kN}}.\end{equation}
 
Suppose that $D$ is preperiodic for $f$, and without loss of generality irreducible. Then there exist $m> n$ and $e$ such that $f_*^mD=ef_*^nD$. Comparing heights we have $d^{mN}\hat{h}_f(D)=ed^{nN}\hat{h}_f(D)$, while comparing degrees gives $d^{m(N-1)}\deg(D)=ed^{n(N-1)}\deg(D)$; these are compatible only if $\hat{h}_f(D)=0$.
 
 It also follows from Lemma~\ref{lem:divescape} that
\begin{equation}\label{eq:heightcontra}\hat{h}_{f^\alpha}(D)=\hat{h}_f(\alpha_*D)\end{equation}
for any $\alpha\in\operatorname{Aut}(\PP^N)$. From this and Lemma~\ref{lem:jhsquo} below, it follows that in fact
$\hat{h}_f(D)=h(D)+O_{N, d}(h_{\mathsf{M}_d^N}(f)\deg(D))$.

In order to establish Theorem~\ref{th:eff}, it remains to track some constants from previous sections. Since
\[\sum_{v\in M_K}\frac{[K_v:\QQ_v]}{[K:\QQ]}\log\|F\| = h_{\operatorname{Hom}_d^N}(f)\]
and
\[\sum_{v\in M_K}\frac{[K_v:\QQ_v]}{[K:\QQ]}\lambda_{\operatorname{Hom}_d^N, v}(f) = (N+1)d^Nh_{\operatorname{Hom}_d^N}(f),\]
from Lemma~\ref{lem:divescape} we have
\begin{align*}
\hat{h}_f(D)-h(D)&\leq 	\sum_{v\in M_K}\frac{[K_v:\QQ_v]}{[K:\QQ]}\left(G_F(\Phi)-m(\Phi)\right)\\
&\leq 	\frac{\deg(\Phi)}{d^N(d-1)}\sum_{v\in M_K}\frac{[K_v:\QQ_v]}{[K:\QQ]}c_{6, v}\\
&= \left(\frac{(N+1)d^N-1}{d-1}\right)h_{\operatorname{Hom}_d^N}(f)\deg(D)+c_8\deg(D)
\end{align*}
with
\begin{align*} 
(d-1)c_8&=\sum_{v\in M_K}\frac{[K_v:\QQ_v]}{[K:\QQ]}(\log^+|N+1|_v+N\log^+|N(d-1)+1|_v\\&\quad +(d+1)^N\log^+|(N+1)d^N|_v+c_{1, v}+dN\log^+|2|_v)\\
&=	\log(N+1)+N\log(N(d-1)+1) +(d+1)^N\log((N+1)d^N)\\&\quad+ r(s+1)\log 2+ \log r + s\log s+dN\log 2
\end{align*}
recalling that
\[r=(N+1)(d+1)^N(d^N+1)^{(N+1)(d+1)^N}\]
and
\[s=((N+1)(d-1)+2)^N.\]

Similarly,
\begin{align*}
h(D)-\hat{h}_f(D)&\leq 	\sum_{v\in m_K}\frac{[K_v:\QQ_v]}{[K:\QQ]}\left(m(\Phi)-G_F(\Phi)\right)\\
&\leq 	\frac{\deg(D)}{d-1}\sum_{v\in m_K}\frac{[K_v:\QQ_v]}{[K:\QQ]}\Big((d^{N+1}-1)\lambda_{\operatorname{Hom}_d^N}(f)\\&\quad +N(d^{N+2}+1)\log^+|2| +(d^{N+1}-1)(c_3+c_4)+\log\|F\|\\&\quad +N\log^+| d+1|\Big)\\
&\leq\left( \frac{(N+1)d^N(d^{N+1}-1)+1}{d-1}\right)h_{\operatorname{Hom}_d^N}(f)\deg(D)+c_9\deg(D)
\end{align*}
where 
\[(d-1)c_9=N(d^{N+2}+1)\log 2 +(d^{N+1}-1)(c_8+N\log(d+1)) +N\log(d+1).\]

Note that $h(f)\geq 0$ and 
\[\frac{(N+1)d^N-1}{d-1}< \frac{(N+1)d^N(d^{N+1}-1)+1}{d-1}\]
for all $d\geq 2$ and $N\geq 1$, hence the result in Theorem~\ref{th:eff}.

We also have the following variant of the Silverman specialization theorem~\cite[Theorem~4.1]{callsilv}.
\begin{theorem}\label{th:silv} For any one-parameter family $(f, D)$ over $B$, we have
\[\hat{h}_{f_t}(D_t)=\hat{h}_f(D)h(t)+o(h(t)),\]
for any degree-one height on $B$, where $o(x)/x\to 0$ as $x\to\infty$.
\end{theorem}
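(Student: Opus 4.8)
\textbf{Proof proposal for Theorem~\ref{th:silv}.}

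The plan is to reduce the statement to the classical Call--Silverman specialization theorem~\cite[Theorem~4.1]{callsilv} by decomposing $\hat{h}_{f_t}(D_t)$ into a sum of local escape-rate contributions and controlling each summand uniformly in $t$. Recall that for each place $v$ of the function field $\overline{\QQ}(B)$ (equivalently, a place of the residue field after specialization) we have the local decomposition
\[
\hat{h}_{f_t}(D_t)=\sum_{w}\frac{[K_w:\QQ_w]}{[K:\QQ]}\,G_{F_t,w}(\Phi_t),
\]
where $F_t$ is a lift of $f_t$ and $\Phi_t$ a form defining $D_t$, both with coefficients that are regular functions (or rational functions with controlled poles) on $B$. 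The key input is Lemma~\ref{lem:divescape}(i), which gives the two-sided bound
\[
\bigl|G_{F_t,w}(\Phi_t)-m_w(\Phi_t)\bigr|\leq \frac{\max\{c_{5,w},c_{6,w}\}}{d^N(d-1)}\deg(D_t),
\]
and the explicit form of $c_{5,w},c_{6,w}$, which are linear in $\lambda_{\operatorname{Hom}_d^N,w}(f_t)$ and $\log\|F_t\|_w$. Summing over $w$, this says exactly that $\hat{h}_{f_t}(D_t)=h(D_t)+O\bigl(h_{\operatorname{Hom}_d^N}(f_t)\deg(D_t)\bigr)$, but with an \emph{absolute} implied constant depending only on $N,d$; so the ``error'' function $t\mapsto \hat{h}_{f_t}(D_t)-h(D_t)$ is a Weil-type height on $B$ that is $O(h(t))$.

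From here I would follow the standard Tate/telescoping argument in the family: write
\[
\hat{h}_{f_t}(D_t)=h(D_t)+\sum_{k=0}^\infty d^{-(k+1)N}\bigl(h((f_t^{k+1})_*D_t)-d^N h((f_t^k)_*D_t)\bigr).
\]
Each bracketed term is, by the global version of Lemma~\ref{lem:pushpullestimates} (or directly by \eqref{eq:canhlimit} applied one step at a time), a Weil height on $B$ in the variable $t$ of the form $c_{N,d}\,h((f_t)_{*}D_t\text{-data})+O(1)$, and since $\deg((f_t^k)_*D_t)=d^{k(N-1)}\deg(D_t)$ and $h((f_t^k)_*D_t)=O(d^{kN}h(t))$, the series converges uniformly and term-by-term to a function that agrees with $\hat{h}_f(D)\cdot h(t)$ up to $o(h(t))$. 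Concretely, one compares the finite truncation
\[
\hat{h}_{f_t^k}(D_t)/d^{k(N+?)}\quad\text{versus}\quad h((f_t^k)_*D_t)/d^{kN}
\]
and uses that, for the generic point, $\hat{h}_f(D)=\lim_k h((f^k)_*D)/d^{kN}$ by \eqref{eq:canhlimit}; the difference between the specialized-then-limited and limited-then-specialized quantities is bounded by the uniform error term divided by $d^{kN}$, hence negligible after sending $k\to\infty$ at a rate slow compared to $h(t)\to\infty$. This is precisely the mechanism of~\cite[Theorem~4.1]{callsilv}.

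The main obstacle I anticipate is bookkeeping the uniformity of the error terms across the infinitely many places $w$ of the specialized field while simultaneously letting $t$ vary: one must check that the bad places (those where $G_{F_t,w}\neq 0$) are controlled by the places of bad reduction of the \emph{family}, together with finitely many places depending on $t$ whose total contribution is $o(h(t))$ by the basic properties of Weil heights on $B$. In the function-field picture this is clean --- $\lambda_{\operatorname{Hom}_d^N}$ of the family is a fixed height on $B$, and $h_{\operatorname{Hom}_d^N}(f_t)\ll h(t)$ --- so the honest content is just invoking \eqref{eq:canhlimit} together with the explicit, $t$-uniform constants from Lemmas~\ref{lem:points est}, \ref{lem:pushpullestimates}, and \ref{lem:divescape}, and then quoting the Call--Silverman specialization argument verbatim. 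I would therefore present the proof as: (1) establish $\hat{h}_{f_t}(D_t)-h(D_t)=O(h(t))$ with absolute constant; (2) for fixed $k$, $h((f_t^k)_*D_t)/d^{kN}=\hat{h}_{f_t}(D_t)+O(d^{-kN}h(t))+O_k(1)$; (3) specialize the generic-point identity $\hat{h}_f(D)=\lim_k h((f^k)_*D)/d^{kN}$ and apply the standard specialization theorem to the height $t\mapsto h((f_t^k)_*D_t)$ for $k$ large, obtaining the claim with the $o(h(t))$ error.
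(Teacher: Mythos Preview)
Your core strategy is correct and matches the paper's: obtain the uniform bound $|\hat{h}_{f_t}(D_t)-h(D_t)|\ll h_{\operatorname{Hom}_d^N}(f_t)\deg(D)$ (which is just Theorem~\ref{th:eff}, so your local decomposition is an unnecessary detour), combine with $h(D_t)=h(D)h(t)+O_D(1)$ and $h_{\operatorname{Hom}_d^N}(f_t)=O_f(h(t))$, and then iterate, replacing $D$ by $f^k_*D$ so the error term shrinks like $\deg(f^k_*D)/d^{kN}=d^{-k}\deg(D)\to 0$.

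The paper's execution is cleaner in two ways. First, it works globally from Theorem~\ref{th:eff} throughout, so your anticipated ``main obstacle'' of bookkeeping infinitely many places never arises. Second, rather than routing through $h((f_t^k)_*D_t)$ and invoking~\cite{callsilv}, it applies the iteration trick directly to the canonical heights: writing the triangle inequality through $h(D_t)/h(t)$ and $h(D)$ gives $\limsup_{h(t)\to\infty}\bigl|\hat{h}_{f_t}(D_t)/h(t)-\hat{h}_f(D)\bigr|\leq C_1(f)\deg(D)$, and then replacing $D$ by $f^k_*D$ and using $\hat{h}_{f_t}((f_t^k)_*D_t)=d^{kN}\hat{h}_{f_t}(D_t)$, $\hat{h}_f(f^k_*D)=d^{kN}\hat{h}_f(D)$ on both sides shows this limsup is at most $C_1(f)\deg(D)/d^k$ for every $k$, hence zero. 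Minor correction: in your step~(2) the error should be $O(d^{-k}h(t))$, not $O(d^{-kN}h(t))$, for the same degree-counting reason.
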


\begin{proof}

 First note that the family $f$ defines a rational map $f:B\dashrightarrow  \operatorname{Hom}_d^N$, which naturally extends to a morphism $f:B\to \PP^{(N+1)\binom{N+d}{d}-1}$. Since $h_{\operatorname{Hom}_d^N}$ is the height on the latter space with respect to the resultant locus, we have
\[h_{\operatorname{Hom}_d^N}(f_t)=h_{B, f^*\Res}(t)=O_f(h(t)),\]
where the latter height $h(t)$ is any ample height on $B$.
Similarly,
\[h(D_t)= h(D)h(t)+O_D(1),\]
where $h(D)$ is the height of the generic fibre as a divisor on $\PP^N$ over $K(B)$.
From these observations and Theorem~\ref{th:eff},
\begin{eqnarray*}
\left|\frac{\hat{h}_{f_t}(D_t)}{h(t)}-\hat{h}_f(D)\right|
&\leq & \left|\frac{\hat{h}_{f_t}(D_t)}{h(t)} -\frac{h(D_t)}{h(t)}\right|+\left|\frac{h(D_t)}{h(t)}-h(D)\right|+\left|h(D)-\hat{h}_f(D)\right|\\
&\leq & C_1(f)\deg(D)+ \frac{C_2(D)}{h(t)},
\end{eqnarray*}
where as indicated $C_1$ depends just on $f$, and $C_2$ depends just on $D$. We thus obtain
\[\limsup_{h(t)\to\infty}\left|\frac{\hat{h}_{f_t}(D_t)}{h(t)}-\hat{h}_f(D)\right|\leq C_1(f)\deg(D),\]
where the dependence on $D$ has been eliminated, and hence
\begin{align*}
0&\leq \limsup_{h(t)\to\infty}\left|\frac{\hat{h}_{f_t}(D_t)}{h(t)}-\hat{h}_f(D)\right|\\ &= d^{-kN}\limsup_{h(t)\to\infty}\left|\frac{\hat{h}_{f_t}((f^k_t)_*D_t)}{h(t)}-\hat{h}_f(f^k_*D)\right|\\
&\leq \frac{C_1(f)\deg(f^k_*D)}{d^{kN}}\\
&=\frac{C_1(f)}{d^k}\to 0
\end{align*}
as $k\to\infty$, proving the claim.
\end{proof}

\begin{remark}
	Although we have not worked out the details, it should be possible to modify the arguments in~\cite{varx} so as to improve Theorem~\ref{th:silv} to an estimate of the form
	\[\hat{h}_{f_t}(D_t)=\hat{h}_f(D)h(t)+O(h(t)^{1-\epsilon}),\]
	for some explicit $\epsilon>0$.
\end{remark}

\begin{proof}[Proof of Theorem~\ref{th:crit}]
As in Section~\ref{sec:intro} define
\[\hat{h}_{\mathrm{crit}}(f)=\hat{h}_f(C_f),\]
where $C_f$ is the critical divisor of $f$.
First, note that $\hat{h}_{f^n}=\hat{h}_f$ along with the observation
\[C_{f^n}=\sum_{j=0}^{n-1}(f^j)^*C_f\]
gives
\[\hat{h}_{\mathrm{crit}}(f^n)=\sum_{j=0}^{n-1}\hat{h}_f((f^j)^*C_f)=n\hat{h}_{\mathrm{crit}}(f).\]
Similarly, $C_{f^M}=M^*C_f$ by the chain rule, and so by~\eqref{eq:heightcontra} we have
\[\hat{h}_{\mathrm{crit}}(f^M)=\hat{h}_{f^M}(C_{f^M})=\hat{h}_{f}(M_*M^*C_f)=\hat{h}_{\mathrm{crit}}(f).\]
Indeed, given any nonconstant morphisms $\pi, f, g:\PP^N\to\PP^N$ with $\pi\circ f=g\circ \pi$, one can check that $\hat{h}_f(\pi^* D) = \hat{h}_g(D)$, whence
\begin{align*}
	\hat{h}_{\mathrm{crit}}(f)&=\hat{h}_f(C_f)\\
	&=\hat{h}_f(\pi^*C_g+C_\pi-f^*C_\pi)\\
&=\hat{h}_g(C_g)+\hat{h}_f(C_\pi)-\hat{h}_f(f^*C_\pi)\\
&=\hat{h}_{\mathrm{crit}}(g).	
\end{align*}

Next, for any lift $F$ of $f$, set $J_F=\det(DF)$. Each partial derivative  satisfies
\[\log\left\|\frac{\partial F_i}{\partial X_j}\right\|_v\leq \log\|F_i\|_v+\log^+|d|,\]
and so (as $J_F$ is a sum of at most $(N+1)!$ signed monomials in these partial derivatives),
\begin{align*}
m(J_F)&\leq \log\|J_F\| +\frac{N}{2}\log^+|\deg(J_F)+1|\\
&\leq (N+1)\log\|F\|_v+(N+1)\log^+|N+1|+\log^+|d|\\
&\quad + \frac{N}{2}\log^+|(N+1)(d-1)+1|.
\end{align*}
Summing over all places, we have
\[h(C_f)\leq (N+1)h_{\mathrm{Hom}_d^N}(f)+(N+1)\log(N+1)+\log d +\frac{N}{2}\log((N+1)(d-1)+1) ,\]
whence
\begin{equation}\label{eq:hcritmodel}\hat{h}_{\mathrm{crit}}(f)=\hat{h}_f(C_f)\leq (C_1+N+1)h_{\mathrm{Hom}_d^N}(f)+O_{d, N}(1)\end{equation}
by Theorem~\ref{th:eff}.
The left-hand-side is coordinate independent, so we may replace the right-hand-side by an infimum over conjugates of $f$.
The following appears in~\cite[p.~103]{barbados}, although the proof there seems to admit a small gap; we remove one of the hypotheses and provide a slightly different proof in Appendix~\ref{ap:quo} below. This lemma, combined with~\eqref{eq:hcritmodel}, gives $\hat{h}_{\mathrm{crit}}(f)\ll \mathsf{M}_d^N(f)$.
\begin{lemma}\label{lem:jhsquo}
Any ample height on $\mathsf{M}_d^N$ is related to the usual Weil height on $\mathrm{Hom}_d^N\subseteq \PP^{(N+1)\binom{N+d}{d}-1}$ by \[h_{\mathsf{M}_d^N}(f)\asymp \inf_{g\sim f} h_{\mathrm{Hom}_d^N}(g).\]
\end{lemma}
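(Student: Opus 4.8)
The plan is to establish the two inequalities implicit in $h_{\mathsf{M}_d^N}(f)\asymp \inf_{g\sim f} h_{\mathrm{Hom}_d^N}(g)$ separately, exploiting the fact that $\mathsf{M}_d^N$ is the GIT quotient of $\mathrm{Hom}_d^N$ (equivalently, of an open subset of $\PP^{(N+1)\binom{N+d}{d}-1}$) by the conjugation action of $\mathrm{PGL}_{N+1}$. First I would fix a projective embedding of $\mathsf{M}_d^N$ realizing $h_{\mathsf{M}_d^N}$ as a Weil height coming from some very ample line bundle; since all ample heights on a projective variety are comparable up to $O(1)$ and bounded multiplicative constants (by functoriality of heights and the fact that some power of any ample bundle is very ample and embeds into projective space together with a suitable power of the chosen one), it suffices to prove the claim for one convenient choice. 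Then the quotient morphism $\Psi:\mathrm{Hom}_d^N\to \mathsf{M}_d^N$ is a morphism of quasi-projective varieties, and functoriality of Weil heights along $\Psi$, together with the invariance $h_{\mathsf{M}_d^N}(\Psi(g))=h_{\mathsf{M}_d^N}(\Psi(f))$ whenever $g\sim f$, will give one direction: $h_{\mathsf{M}_d^N}(f)\leq C\,h_{\mathrm{Hom}_d^N}(g)+C'$ for every $g\sim f$, hence $h_{\mathsf{M}_d^N}(f)\ll \inf_{g\sim f} h_{\mathrm{Hom}_d^N}(g)$.

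For the reverse inequality I would use the structure of the GIT quotient more carefully. The coordinate ring of $\mathsf{M}_d^N$ (after passing to an affine chart, or using the homogeneous invariant ring) is generated by $\mathrm{PGL}_{N+1}$-invariant functions of the coefficients of $f$; by the Hilbert--Mumford / Kempf--Ness theory, or just by the explicit description of the invariants, a point of $\mathsf{M}_d^N$ of bounded height is the image of a point of $\mathrm{Hom}_d^N$, and one can choose a representative $g\sim f$ whose coordinates are polynomials in those invariants of bounded degree and bounded coefficient-height. Concretely: the stable locus is covered by basic affine opens $D(s)$ for invariant sections $s$, and on each such open there is a section $\sigma$ of $\Psi$ defined by polynomial formulas in the invariants; applying height functoriality to $\sigma$ and controlling the finitely many formulas involved yields $h_{\mathrm{Hom}_d^N}(\sigma(\Psi(f)))\leq C\,h_{\mathsf{M}_d^N}(f)+C'$, and since $\sigma(\Psi(f))\sim f$ this bounds $\inf_{g\sim f}h_{\mathrm{Hom}_d^N}(g)$ from above by $C\,h_{\mathsf{M}_d^N}(f)+C'$. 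Finitely many opens cover $\mathsf{M}_d^N$, so taking the maximum of the constants over this cover completes the estimate.

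The main obstacle I anticipate is the reverse direction, specifically the need to produce a \emph{global} section — or at least a finite cover by local sections defined by controlled polynomial formulas — of the quotient map $\Psi$. The GIT quotient of $\mathrm{Hom}_d^N$ by $\mathrm{PGL}_{N+1}$ is in general not a geometric quotient over the whole space (there can be strictly semistable points, and stabilizers jump on the Lattès locus and elsewhere), so one must either restrict to the stable locus and argue that the semistable-but-not-stable locus is lower-dimensional and handled by a separate (crude) height comparison, or invoke Silverman's construction of $\mathsf{M}_d^N$ as a quasi-projective variety and the properness properties established there. Because $\mathrm{Hom}_d^N$ consists of morphisms (no common zeros of the coordinate forms), every point is stable for the $\mathrm{SL}_{N+1}$-action, which sidesteps most of this difficulty; I would cite this stability (it is classical, going back to the original construction of the moduli space) and then the local-section argument goes through cleanly. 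The remaining work — bounding the heights of the explicit invariant polynomials and the section formulas — is routine: they are fixed polynomials depending only on $N$ and $d$, so the implied constants depend only on $N$ and $d$, exactly as claimed. A cleaner alternative, if one wants to avoid explicit invariant theory entirely, is to note that $\Psi$ extends to a morphism between the projective closures after a suitable blow-up, apply height functoriality twice (for $\Psi$ and for a rational section), and absorb the indeterminacy locus into the $O(1)$; this is the approach I would present if brevity is the priority, deferring the finer points to Appendix~\ref{ap:quo}.
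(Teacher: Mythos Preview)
Your forward direction is fine and matches the paper. The gap is in the reverse direction: you assert that $\Psi:\mathrm{Hom}_d^N\to\mathsf{M}_d^N$ admits regular sections $\sigma$ over basic affine opens $D(s)$. Over the free locus $\Psi$ is a $\mathrm{PGL}_{N+1}$-torsor, and $\mathrm{PGL}_{N+1}$ is not special in Serre's sense, so such torsors are \'etale-locally but not in general Zariski-locally trivial. Stability of all points of $\mathrm{Hom}_d^N$ guarantees that the quotient is geometric (closed orbits, fibres are single orbits), but it says nothing about trivializing the torsor; lifting to $\mathrm{SL}_{N+1}$ does not help either, since the centre $\mu_{N+1}$ acts trivially and the effective action is still by $\mathrm{PGL}_{N+1}$. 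The existence of your $\sigma$ is therefore exactly the content you need and have not supplied. Your fallback proposal (a rational section plus absorbing indeterminacy into $O(1)$) rests on the same unjustified assumption, and would in any case still require an inductive argument to cover the locus where the section is undefined.

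The paper's argument in Appendix~\ref{ap:quo} avoids sections entirely by proving the more general Lemma~\ref{lem:domrat} for an arbitrary dominant rational map $\phi:X\dashrightarrow Y$. One factors $\phi$ as an equidimensional dominant $\psi:X\dashrightarrow Y\times\PP^r$ followed by projection, applies Silverman's height bound for equidimensional maps~\cite{jhsequi} to obtain $h_{X,L}(x)\ll h_{Y,M}(\phi(x))+h(\pi_{\PP^r}\psi(x))$, and then shows (Lemma~\ref{lem:apoint}) that the image of $\psi$ contains $V\times\{P\}$ for some dense open $V\subseteq Y$ and a \emph{fixed} point $P\in\PP^r$. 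Choosing any $y$ with $\psi(y)=(\phi(x),P)$ yields a preimage of $\phi(x)$ whose $\PP^r$-contribution is the constant $h(P)$; Noetherian induction on dimension then handles the complement $X\setminus\phi^{-1}(V)$. No section of $\phi$ is ever required: one only needs \emph{some} point of bounded height in each fibre, and pinning down the $\PP^r$-coordinate to $P$ accomplishes exactly that.
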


Moreover, let $f_t$ be a one-parameter family of morphisms. By Theorem~\ref{th:silv}, since $C_{f_t}=(C_f)_t$, we have
\[\hat{h}_{\mathrm{crit}}(f_t)=\hat{h}_{\mathrm{crit}}(f)h_B(t)(1+o(1))\]
for any degree-1 height $h$ on the base, where $o(1)\to 0$ as $h(t)\to\infty$. In particular, if $\hat{h}_{\mathrm{crit}}(f)\neq 0$ (on the generic fibre), then
\[\hat{h}_{\mathrm{crit}}\asymp h_{\mathsf{M}_d^N}\]
in this family, with constants depending on the family.	
\end{proof}

\appendix

\section{Height on moduli space}\label{ap:quo}

Here we give a slight strengthening of a result of Silverman, using a very similar argument. We have essentially just removed the hypothesis of flatness, which might have been causing a problem in the induction step of the proof of the corresponding lemma in~\cite{barbados}.

The motivation is to prove that for any algebraic family of endomorphisms $f/X$ of $\PP^N$, and any ample class $L$ on $X$, we have
\[h_{\mathsf{M}_d^N}(f_x)\asymp \inf_{f_y\sim f_x}h_{X, L}(y)\]
(with $\sim$ denoting conjugation), without any real assumptions on $X$ or the family. Specifically, we will prove Lemma~\ref{lem:jhsquo} by applying Lemma~\ref{lem:domrat} to the dominant morphism $\mathrm{Hom}_N^d\to \mathsf{M}_d^N$	taking an endomorphism of $\PP^N$ to its conjugacy class (see~\cite[Chapter~2]{barbados}).

 As usual, for real-valued functions $f$ and $g$ we write $f\ll g$ to mean that there exist constants $C>0$ and $C'$ with $f\leq Cg+C'$, and we write $f\asymp g$ for $f\ll g \ll f$.
\begin{lemma}\label{lem:domrat}
Let $\phi:X\dasharrow Y$ be a rational map of quasi-projective varieties, and 	let $L$, $M$ be ample line bundles on $X$ and $Y$ respectively. Then for $x$ in the domain of $\phi$,
\[h_{Y, M}(\phi(x))\asymp \inf_{\phi(y)=\phi(x)}h_{X, L}(y).\]
\end{lemma}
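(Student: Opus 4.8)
The plan is to prove the two inequalities $h_{Y,M}(\phi(x)) \ll \inf_{\phi(y)=\phi(x)} h_{X,L}(y)$ and the reverse separately, reducing as far as possible to the case of a morphism, and then handling the general rational-map case by resolving the indeterminacy.

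\textbf{Step 1: the upper bound for a morphism.} Suppose first that $\phi:X\to Y$ is an honest morphism of quasi-projective varieties with $L,M$ ample. Then $\phi^*M$ is nef on $X$, so for some large $n$ the bundle $nL - \phi^*M$ is ample (indeed effective-plus-ample suffices), whence $h_{Y,M}(\phi(x)) = h_{X,\phi^*M}(x) \leq n\,h_{X,L}(x) + O(1)$ for all $x\in X$. In particular $h_{Y,M}(\phi(x)) \ll h_{X,L}(y)$ for \emph{every} $y$ in the fibre over $\phi(x)$, so taking the infimum over the fibre gives $h_{Y,M}(\phi(x)) \ll \inf_{\phi(y)=\phi(x)} h_{X,L}(y)$.

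\textbf{Step 2: the lower bound for a morphism.} For the reverse inequality one wants to produce, in each fibre $\phi^{-1}(\phi(x))$, a point of controlled height. Embed $Y$ in some $\PP^m$ via a power of $M$; the graph of $\phi$, or rather $\phi$ itself, is a morphism, and a general fibre is a closed subvariety of $X$. The key point is that $\inf_{\phi(y)=\phi(x)} h_{X,L}(y) \ll h_{Y,M}(\phi(x))$: I would argue by a Noetherian induction on $\dim X$, cutting $X$ by hyperplane sections in $|nL|$. A general member $X' \in |nL|$ is again quasi-projective, $L|_{X'}$ is ample, and $\phi|_{X'}: X' \to Y$ is still dominant onto $Y$ (for a general such section, by Bertini, as long as $\dim X > \dim Y$); the restriction drops $\dim X$ by one while keeping the target fixed, so after finitely many steps we reach the situation where $\phi$ is generically finite, and then a general fibre is finite of bounded degree, whose points have height $\ll h_{Y,M}(\phi(x))$ by functoriality of heights under the finite map. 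One then has to propagate the height bound back up through the hyperplane sections; this is where the argument of Silverman in \cite[Chapter~2]{barbados} is followed, the present point being that no flatness hypothesis on $\phi$ is needed because we only ever restrict to \emph{general} hyperplane sections and only claim an estimate valid for a single well-chosen point of each fibre.

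\textbf{Step 3: from morphisms to rational maps.} Given a rational map $\phi: X \dasharrow Y$, let $\pi: \widetilde{X} \to X$ be a resolution of indeterminacy, so that $\widetilde{\phi} = \phi \circ \pi: \widetilde{X} \to Y$ is a morphism and $\pi$ is birational (an isomorphism over the domain of $\phi$). Pick an ample $\widetilde{L}$ on $\widetilde{X}$; then $h_{\widetilde{X},\widetilde{L}}(z) \asymp h_{X,L}(\pi(z)) + O(1)$ on the locus where $\pi$ is an isomorphism, so the fibrewise infima over $\phi$ and over $\widetilde{\phi}$ agree up to the usual constants, and Steps 1--2 applied to $\widetilde{\phi}$ give the claim for $\phi$.

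\textbf{Main obstacle.} The genuinely delicate part is Step 2, and specifically the induction: one must ensure that restricting to a general hyperplane section both preserves dominance onto $Y$ and allows the height estimate to be transferred back, without assuming $\phi$ is flat. The remedy — which is the whole point of this appendix — is that we never need a uniform statement over all fibres, only the existence in each fibre of one point of controlled height, so a Bertini-type genericity argument at each stage suffices, and the potential gap in \cite{barbados} at the induction step (where flatness was invoked) is thereby sidestepped.
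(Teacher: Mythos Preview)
Your outline takes a genuinely different route from the paper. The paper never resolves indeterminacy; instead it reduces to projective $X,Y$, factors the dominant rational map through an \emph{equidimensional} map $\psi:X\dashrightarrow Y\times\PP^r$ (choosing a transcendence basis of $k(X)/k(Y)$), and then proves that the image of $\psi$ contains a slice $V\times\{P\}$ for some fixed $P\in\PP^r$ and dense open $V\subseteq Y$. Silverman's equidimensional height estimate applied to $\psi$ then gives, for $\phi(x)\in V$, a witness $y$ with $\psi(y)=(\phi(x),P)$, hence $h_{X,L}(y)\ll h_{Y,M}(\phi(x))+h(P)$; the complement is handled by Noetherian induction on $\dim X$. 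Your idea of cutting $X$ by a fixed ample hypersurface $X_1\in|nL|$ and using that every fibre (being positive-dimensional) meets $X_1$ is a reasonable alternative to the $Y\times\PP^r$ factorization, and both ultimately rest on the same equidimensional result.

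There is, however, a genuine gap in your Step~3. After resolving to a morphism $\widetilde{\phi}:\widetilde{X}\to Y$ and producing, via Step~2, a point $\tilde y$ in the $\widetilde{\phi}$-fibre with small $h_{\widetilde L}(\tilde y)$, you set $y=\pi(\tilde y)$ and assert that the fibrewise infima for $\phi$ and $\widetilde\phi$ ``agree up to constants''. They need not, in the direction you require: the $\widetilde\phi$-fibre over $\phi(x)$ can contain points of the exceptional locus $E=\pi^{-1}(Z)$, and then $y=\pi(\tilde y)$ lies in the indeterminacy locus $Z$ of $\phi$, so $\phi(y)$ is undefined and $y$ is not an admissible witness for the infimum $\inf_{\phi(y)=\phi(x)}h_{X,L}(y)$. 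Your height comparison $h_{\widetilde X,\widetilde L}\asymp h_{X,L}\circ\pi$ is only valid off $E$, and the infimum over the larger fibre can only be \emph{smaller} than the one you want to bound. This is not a cosmetic issue: nothing in your Step~2 forces $\tilde y\notin E$.

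The paper's construction sidesteps this automatically, because the witness $y$ satisfies $\psi(y)=(\phi(x),P)$ and hence lies in the domain of $\psi$, which is contained in the domain of $\phi=\pi_Y\circ\psi$. If you want to salvage your approach, drop the resolution of indeterminacy and run your hyperplane-section argument directly for the rational map on the projective closures; you will then need an extra ingredient (e.g.\ showing that for $\phi(x)$ outside a proper closed subset of $Y$ the intersection of the fibre closure with $X_1$ is not entirely contained in the indeterminacy locus, and absorbing the exceptional set into the Noetherian induction).
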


In the following lemmas we will restrict to projective varieties, always irreducible, which we will see later is no great loss of generality.

\begin{lemma}
Let $\phi:X\dashrightarrow Y$ be a dominant rational map of projective varieties. Then $\phi$ factors as an equidimensional dominant rational map $\psi:X\dashrightarrow Y\times \PP^r$ composed with the projection to the first coordinate.
\end{lemma}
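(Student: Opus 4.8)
The claim is that a dominant rational map $\phi: X \dashrightarrow Y$ of projective varieties factors as an \emph{equidimensional} dominant rational map $\psi: X \dashrightarrow Y \times \PP^r$ followed by the first projection. The strategy is to choose $r = \dim X - \dim Y$ and build $\psi$ so that its generic fibres all have dimension zero (hence $\psi$ is generically finite, which is the relevant form of ``equidimensional'' here). First I would replace $X$ by a resolution or a suitable birational model — or simply pass to an open dense $U \subseteq X$ on which $\phi$ is a morphism — so that we genuinely have a dominant morphism $\phi: U \to Y$ whose generic fibre $F$ has dimension exactly $r = \dim X - \dim Y$. This reduction is harmless because a rational map is determined by its restriction to any dense open set, and the factorization we produce over $U$ extends to the rational-map statement over $X$.

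\textbf{Key steps.} The heart of the argument is to produce $r$ rational functions (or sections of an ample line bundle) on $X$ whose restrictions to the generic fibre $F$ of $\phi$ separate the points of $F$ generically, i.e. give a generically finite map $F \dashrightarrow \PP^r$. Concretely: let $\eta$ be the generic point of $Y$, so that the generic fibre $X_\eta$ is a projective variety of dimension $r$ over the function field $k(Y)$. Choose a very ample line bundle on $X_\eta$ and pick $r$ general sections $s_1, \dots, s_r$; by Bertini-type genericity their common zero locus on $X_\eta$ is finite (indeed empty for general choices, but one only needs that the induced map $X_\eta \dashrightarrow \PP^r$ is generically finite onto its image, equivalently has $0$-dimensional generic fibres). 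Spreading these sections out over a dense open of $Y$, they assemble into a rational map $g: X \dashrightarrow \PP^r$, and I set $\psi = (\phi, g): X \dashrightarrow Y \times \PP^r$. By construction the composite with the first projection is $\phi$. Dominance of $\psi$ onto its image and the fact that $\psi$ has $0$-dimensional generic fibres — hence $\dim(\overline{\psi(X)}) = \dim X = \dim Y + r$, so $\psi$ is in fact dominant onto all of $Y \times \PP^r$ when the image is forced to be everything by dimension count, or onto an $r$-dimensional-fibred subvariety otherwise — follows from the dimension theorem for fibres of morphisms (upper semicontinuity of fibre dimension), applied after restricting to the locus where $\psi$ is a morphism.

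\textbf{Main obstacle.} The delicate point is getting the word ``equidimensional'' to hold, not merely generically but in the precise sense needed downstream (presumably: every fibre of $\psi$ has the same dimension, or at least the generic fibre is finite and this is what is used in Lemma~\ref{lem:domrat}). Generic finiteness of $\psi$ is straightforward from the construction above, but if a genuinely equidimensional morphism on a dense open is required, one must also shrink $X$ to remove the loci where fibre dimension jumps — which is fine for a rational-map statement — or invoke flattening/Chow-theoretic arguments to arrange constant fibre dimension on the source. I expect the intended reading is the weak one (generically finite, i.e. equidimensional on a dense open), so the real work is just the Bertini genericity argument for the sections $s_1, \dots, s_r$ over the function field $k(Y)$, together with spreading out; everything else is bookkeeping about domains of definition of rational maps and the dimension formula $\dim X = \dim Y + \dim X_\eta$.
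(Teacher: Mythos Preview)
Your geometric strategy would work, but the paper's proof is a two-sentence function-field argument that bypasses all of it. Since $\phi$ is dominant, $\phi^*: k(Y) \hookrightarrow k(X)$, and any finitely generated extension of fields is finite over a purely transcendental subextension: $k(X) \supseteq \phi^* k(Y)(t_1, \dots, t_r)$ is finite, with $r = \operatorname{trdeg}_{k(Y)} k(X) = \dim X - \dim Y$. The intermediate field is exactly $k(Y \times \PP^r)$, and the corresponding rational map $\psi: X \dashrightarrow Y \times \PP^r$ is dominant (from the field inclusion) and generically finite (from finiteness of the residual extension), hence equidimensional in the sense used downstream.

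Your approach is the same algebra in geometric clothing---your ``$r$ rational functions on the generic fibre'' are nothing but a transcendence basis of $k(X_\eta)$ over $k(Y)$---but it brings in unnecessary machinery (Bertini, spreading out, fibre-dimension semicontinuity). There is also a small slip: $r$ sections of a line bundle define a rational map to $\PP^{r-1}$, not $\PP^r$, and on an $r$-fold their common zero locus is generically a finite \emph{nonempty} set, not empty; you want either $r$ rational functions (mapping to $\AA^r \hookrightarrow \PP^r$) or $r{+}1$ sections. Finally, your worry about ``equidimensional'' is misplaced: in this paper, and in the Silverman height-estimate result it invokes, the term simply means $\dim X = \dim(Y \times \PP^r)$, i.e.\ that $\psi$ is generically finite, which is immediate once $\psi$ is dominant.
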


\begin{proof}
Since $\phi$ is dominant, it induces an embedding $\phi^*:k(Y)\to k(X)$, so we can view $k(X)$ as a finitely-generated extension of $k(Y)$. Such an extension is a finite extension of a purely transcendental extension, and the transcendence rank $r$ is exactly $\dim(X)-\dim(Y)$. This intermediate purely transcendental extension, say $\phi^*k(Y)(t_1, ..., t_{r})\subseteq k(X)$, is the function field of $Y\times \PP^r$,  giving the factorization.	
\end{proof}

\begin{lemma}\label{lem:apoint}
Given a dominant rational map $\phi:X\dashrightarrow Y\times \PP^r$, where $X$ and $Y$ are projective, there is a dense open $V\subseteq Y$ and a point $P\in \PP^r$ such that the image of $\phi$ contains $V\times\{P\}$.	
\end{lemma}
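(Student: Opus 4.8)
The plan is to prove Lemma~\ref{lem:apoint} by a dimension count combined with a Chevalley-type argument about the image of a constructible set. First I would choose a representative open subvariety $U\subseteq X$ on which $\phi$ is a morphism $\phi\colon U\to Y\times\PP^r$; since $\phi$ is dominant, $\phi(U)$ is a constructible subset of $Y\times\PP^r$ that is dense, hence contains a dense open subset $W\subseteq Y\times\PP^r$. The claim then reduces to showing that a dense open $W\subseteq Y\times\PP^r$ must contain a set of the form $V\times\{P\}$ for some dense open $V\subseteq Y$ and some point $P\in\PP^r$.

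To see this, let $Z = (Y\times\PP^r)\setminus W$, a proper closed subset. Fix any point $P\in\PP^r$ and consider the ``slice'' $Y\times\{P\}$. If $Y\times\{P\}\not\subseteq Z$, then $Z\cap(Y\times\{P\})$ is a proper closed subset of $Y\times\{P\}\cong Y$, so its complement $V$ inside $Y$ is dense open and $V\times\{P\}\subseteq W$, as desired. So it suffices to find a single $P$ with $Y\times\{P\}\not\subseteq Z$. Suppose for contradiction that $Y\times\{P\}\subseteq Z$ for every $P\in\PP^r$; then $Z = Y\times\PP^r$, contradicting that $Z$ is proper. (Concretely: $Z\supseteq \bigcup_{P} Y\times\{P\} = Y\times\PP^r$.) Hence such a $P$ exists, and the lemma follows.

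I would organize the write-up as: (1) replace $\phi$ by an honest morphism on a dense open $U\subseteq X$; (2) invoke Chevalley's theorem that $\phi(U)$ is constructible, so, being dense, it contains a dense open $W$; (3) run the slicing argument above to extract $V\times\{P\}$ from $W$. The main obstacle — really the only subtlety — is step (2): one must be careful that ``dense constructible'' genuinely implies ``contains a dense open,'' which is standard for constructible sets in a Noetherian (in fact irreducible, since $Y\times\PP^r$ is irreducible) scheme, as a dense constructible set must contain the generic point and hence a nonempty open neighborhood of it. Everything else is elementary topology of products. Note this is purely geometric and uses no height theory; the height comparisons in Lemma~\ref{lem:domrat} will be deduced afterward from functoriality of heights once the point $P$ and the section $y\mapsto(\phi(y))$ over $V\times\{P\}$ are in hand.
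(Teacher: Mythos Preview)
Your argument is correct and in fact more direct than the paper's. Both proofs begin the same way: pass to a dense open on which $\phi$ is a morphism, and use that the image of a dominant morphism contains a dense open $W\subseteq Y\times\PP^r$. From there, however, the paper proceeds by induction on $r$: it chooses a hyperplane $H\subseteq\PP^r$ with $W\cap(Y\times H)\neq\varnothing$, restricts $\phi$ to the preimage to obtain a dominant rational map $X_H\dashrightarrow Y\times H\cong Y\times\PP^{r-1}$, and then invokes the inductive hypothesis. You instead go straight to a point: since the closed complement $Z=(Y\times\PP^r)\setminus W$ is proper, it cannot contain every slice $Y\times\{P\}$, and any $P$ with $Y\times\{P\}\not\subseteq Z$ immediately yields the desired $V$. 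This bypasses the induction entirely and is strictly simpler; the paper's hyperplane-slicing buys nothing extra here, since the same pigeonhole observation works for points as for hyperplanes. Your care in step~(2) about dense constructible sets containing a dense open (via the generic point of the irreducible $Y\times\PP^r$) is appropriate, and matches what the paper assumes implicitly.
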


\begin{proof}
We proceed by induction on $r$, noting that the statement is essentially trivial in the case $r=0$. Specifically, in this case the image of $\phi$ contains an open dense subset of $Y\times \PP^0$ by dominance, and every such open set has the form $V\times \PP^0$ because projection onto the first coordinate is an isomorphism.

Now suppose that the statement is true for smaller values of $r$. Note that since $\phi$ is dominant, the image contains a dense open set $U$. The closed set $(Y\times \PP^r)\setminus U$ can contain only finitely many subvarieties of the form $Y\times H$, with $H\subseteq \PP^r$ a  hyperplane, and so we may choose $H$ so that $U_H=U\cap (Y\times H)$ is non-empty, and hence dense and open in $Y\times H$.  Let $X_H$ be the closure in $X$ of $\phi^{-1}(Y\times H)$. Now, the restriction of $\phi$ to $X_H$, say $\phi_H:X_N\dashrightarrow Y\times H$, is a dominant map of projective varieties (since its image contains $U_H$), and since $H\cong \PP^{r-1}$ we can conclude from the induction hypothesis that the image of $\phi_H$ contains a set of the form $V\times \{P\}$, with $V\subseteq Y$ open and dense, and $P\in H$. But the image of $\phi$ contains the image of the restricted map $\phi_H$.
\end{proof}

\begin{lemma}\label{lem:existsy}
Given a dominant rational map $\phi:X\dashrightarrow Y$ of projective varieties, with ample line bundles $L$ and $M$, there exists an open dense subset $U\subseteq X$ such that for all $x\in U$ there is a $y\in U$ with $\phi(x)=\phi(y)$ and
\[h_{Y, M}(\phi(y))\gg h_{X, L}(y)\]
(where the implied constants do not depend on $x$ or $y$).
\end{lemma}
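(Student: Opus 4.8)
The plan is to reduce the statement to a Siegel-type counting argument on fibers, using the factorization from the preceding two lemmas. By Lemma~\ref{lem:apoint} applied to the equidimensional factorization $\psi:X\dashrightarrow Y\times\PP^r$ of $\phi$, we obtain a dense open $V\subseteq Y$ and a point $P\in\PP^r$ with $V\times\{P\}$ contained in the image of $\psi$. First I would pull this back: let $U\subseteq X$ be the dense open locus over which $\psi$ is a morphism and over which the fiber dimension is exactly $r$ (this uses upper semicontinuity of fiber dimension, shrinking $U$ as needed), intersected with the preimage of $V\times\PP^r$. For $x\in U$, the fiber $\psi^{-1}(\psi(x))$ and the fiber $\psi^{-1}(\phi(x)\times\{P\})$ are both $r$-dimensional, and crucially the latter is a positive-dimensional (when $r\geq 1$) projective subvariety of $X$ on which $\phi$ is constant equal to $\phi(x)$. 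The case $r=0$ (i.e. $\phi$ generically finite) must be handled separately and is essentially trivial: then one simply takes $y=x$, and $h_{Y,M}(\phi(x))\gg h_{X,L}(x)$ follows because $\phi^*M$ is big and hence dominates $L$ up to a bounded error on a dense open set, by functoriality of heights and the fact that some multiple of $\phi^*M-L$ is effective after restriction.

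For the main case $r\geq 1$, the key step is to choose $y$ of small height in the fiber $F_x:=\psi^{-1}(\phi(x)\times\{P\})$. Here I would invoke a uniform version of the elementary fact that a positive-dimensional projective variety contains points of bounded height relative to any ample $L$: concretely, intersecting $F_x$ with a suitable linear subspace (or applying a Bertini-type argument) produces a point $y\in F_x$ with $h_{X,L}(y)$ bounded by a quantity depending only on $\deg_L(F_x)$, which itself is bounded uniformly as $x$ ranges over $U$ since the $F_x$ form an algebraic family. Thus $h_{X,L}(y)=O(1)$ uniformly, while $\phi(y)=\phi(x)$, and trivially $h_{Y,M}(\phi(y))\geq 0$, so $h_{Y,M}(\phi(y))\gg h_{X,L}(y)$ holds with the $O(1)$ absorbed into the additive constant in $\gg$. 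After further shrinking $U$ to make it an honest dense open on which everything above is simultaneously valid, this yields the lemma.

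I expect the main obstacle to be the uniformity in $x$: one needs that the points $y$ can be chosen with height bounded independently of $x$, which requires controlling the family $\{F_x\}$ — its members must have uniformly bounded degree and lie in a bounded-height family of cycles. The cleanest way to arrange this is to spread out $\psi$ over the base $V$ and use that the fibers of a flat (or at least equidimensional, over an open set) projective morphism have locally constant Hilbert polynomial, hence constant $L$-degree; then a single linear-algebra bound on the height of a point in a variety of given degree and given ambient height suffices. A secondary subtlety is the generically-finite case, where ``$\phi^*M$ dominates $L$'' needs the map to be a morphism on $U$ and $M$ ample on $Y$; restricting to the open locus where $\phi$ is a morphism handles this, and then standard height machinery (\cite{callsilv}-style functoriality) gives $h_{X,L}\ll \phi^*h_{Y,M}+O(1)$ on that locus.
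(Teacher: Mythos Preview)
Your proposal contains a genuine gap in the $r\geq 1$ case. The claim that one can choose $y\in F_x=\psi^{-1}(\phi(x),P)$ with $h_{X,L}(y)=O(1)$ \emph{uniformly in $x$} is false. Consider the simplest instance: $X=\PP^1\times\PP^1$, $Y=\PP^1$, $\phi=\pi_1$ the first projection, $L=\mathcal{O}(1,1)$, $M=\mathcal{O}(1)$. Here $r=1$ and $\psi$ is the identity on $\PP^1\times\PP^1$, so $F_x=\{(\phi(x),P)\}$ is a single point, and $h_{X,L}(\phi(x),P)=h(\phi(x))+h(P)$, which is unbounded as $x$ varies. The same phenomenon persists if one instead interprets $F_x$ as the full $\phi$-fiber $\phi^{-1}(\phi(x))=\{\phi(x)\}\times\PP^1$: every point of this fiber has $L$-height at least $h(\phi(x))$. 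A positive-dimensional projective variety of bounded degree need \emph{not} contain a point of uniformly bounded height; the height of the minimal point is governed by the height of the variety as a cycle, which here grows with $h(\phi(x))$. (There is also a dimension slip in your argument: since $\psi$ is generically finite, $\psi^{-1}$ of a point is $0$-dimensional, not $r$-dimensional.)

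The paper's argument avoids this by not seeking absolutely bounded height. Instead it applies Silverman's height inequality for equidimensional dominant rational maps \cite{jhsequi} to $\psi$ directly, obtaining
\[
h_{X,L}(y)\ll h_{Y\times\PP^r,\,\pi_Y^*M\otimes\pi_{\PP^r}^*\mathcal{O}(1)}(\psi(y))=h_{Y,M}(\phi(y))+h_{\PP^r}(\pi_{\PP^r}\psi(y))+O(1).
\]
Choosing $y$ with $\psi(y)=(\phi(x),P)$ (which exists by Lemma~\ref{lem:apoint}) makes the second term equal to the constant $h_{\PP^r}(P)$, yielding $h_{X,L}(y)\ll h_{Y,M}(\phi(y))$. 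The point is that $h_{X,L}(y)$ is allowed to grow with $x$; what is controlled is only the \emph{difference} $h_{X,L}(y)-h_{Y,M}(\phi(y))$. Your $r=0$ treatment is essentially this argument specialized, but for $r\geq 1$ you need the equidimensional inequality rather than a bounded-height-point-in-fiber argument.
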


\begin{proof}
Let $\psi:X\dashrightarrow Y\times \PP^r$ be the equidimensional factoring map from above, and let $\pi_Y$ and $\pi_{\PP^r}$ be the 	projections of $Y\times \PP^r$ onto its coordinates. Then $(\pi_Y^*M)\otimes(\pi_{\PP^r}^*\mathcal{O}(1))$ is an ample line bundle on $Y\times \PP^r$. It follows from the main result of~\cite{jhsequi} that
\begin{eqnarray*}
h_{X, L}(x)&\ll& h_{Y\times \PP^r, (\pi_Y^*M)\otimes(\pi_{\PP^r}^*\mathcal{O}(1))}(\psi(x))\\
&=&h_{Y\times \PP^r, \pi_Y^*M}(\psi(x))+h_{Y\times \PP^r, \pi_{\PP^r}^*\mathcal{O}(1)}(\psi(x))+O(1)\\
&=&h_{Y, M}(\pi_{Y}\circ \psi(x))+h_{\PP^r, \mathcal{O}(1)}(\pi_{\PP^r}\circ\psi(x))+O(1)\\
	&=&h_{Y, M}(\phi(x))+h_{\PP^r, \mathcal{O}(1)}(\pi_{\PP^r}\circ\psi(x))+O(1)
\end{eqnarray*}
Now, by Lemma~\ref{lem:apoint}, the image of $\phi$ contains a set of the form $V\times \{P\}$, where $V\subseteq Y$ is dense and open. Let $U=\phi^{-1}(V)\subseteq X$, so $U$ is dense and open in $X$. For $x\in U$, since $\phi(x)\in V$ we have that the image of $\psi$ contains $(\phi(x), P)$, say $\psi(y)=(\phi(x), P)$. But then by construction $\phi(y)=\phi(x)$, and
\[h_{X, L}(y)\ll h_{Y, M}(\phi(y))+h_{\PP^r, \mathcal{O}(1)}(P)\ll h_{Y, M}(\phi(y)).\]
since $P\in \PP^r$ depends only on $X$, $Y$, and $\phi$. 
\end{proof}

\begin{proof}[Proof of Lemma~\ref{lem:domrat}]
Let $\phi:X\dasharrow Y$ be a rational map of quasi-projective varieties. First, note that we can assume without loss of generality that the map is dominant. Setting $Z$ to be the closure in $Y$ of the image of $\phi$, we certainly have that $\phi:X\dasharrow Z$ is dominant, and the restriction of $M$ to $Z$ is still ample, so we can freely replace $Y$ by $Z$ to assume the original map was dominant.
Then extending to the projective closures gives a rational map $\phi:\overline{X}\dashrightarrow \overline{Y}$ whose domain contains the original domain of $\phi$, so we can assume without loss of generality that our original varieties were projective.

As usual,  we have
$h_{Y, M}(\phi(x))\ll h_{X, L}(x)$  immediately from the triangle inequality. We are aiming, then, to prove the direction that states that for all $x$ in the domain of $\phi$ there is a $y$ with $\phi(x)=\phi(y)$ and $h_{X, L}(y)\ll h_{Y, M}(\phi(y))$.

 By Lemma~\ref{lem:existsy}, there is an open $U\subseteq X$ on which the statement we wish to prove holds. But $X\setminus U=Z_1\cup\cdots \cup Z_k$ for some irreducible projective varieties $Z_i$ with $\dim(Z_i)<\dim(X)$. Now, the restriction of $\phi$ to $Z_i$ is dominant onto the closure $W_i$ of $\phi(Z_i)$ in $Y$, and the restrictions of $L$ and $M$ to $Z_i$ and $W_i$ are ample line bundles. We may apply induction on the dimension to conclude that the desired statement is true for each $\phi_{Z_i}:Z_i\dashrightarrow W_i$, and then adjust the constants. The base case is when $Z_i$ is a point, in which case the result is trivial. So by induction on dimension, we obtain the desired result on the full domain of $\phi$.
\end{proof}

\end{document}